\newcommand{\COMM}[2]{{
\ifthenelse{\equal{#1}{AT}}{\color{red}}{
\ifthenelse{\equal{#1}{PL}}{\color{blue}}}
[#1: #2]
}}
\newcommand{\No}{\mathcal{N}}
\newcommand{\Normal}{\mathcal{N}(0,1)}
\newcommand{\fgrad}{\nabla f(x_k)}
\theoremstyle{thmstyleone}%
\newtheorem{theorem}{Theorem}
\newtheorem{proposition}[theorem]{Proposition}%
\newtheorem{lemma}{Lemma}
\theoremstyle{thmstyletwo}%
\newtheorem{remark}{Remark}%
\theoremstyle{thmstylethree}%
\newtheorem{definition}{Definition}%
\newtheorem{assume}{Assumption}
\begin{document}

\title[Randomized subspace gradient method for constrained optimization]{Randomized subspace gradient method for constrained optimization}


\author*[1]{\fnm{Ryota} \sur{Nozawa}}\email{nozawa-ryota860@g.ecc.u-tokyo.ac.jp}

\author[2]{\fnm{Pierre-Louis} \sur{Poirion}}\email{pierre-louis.poirion@riken.jp}

\author[1,2]{\fnm{Akiko} \sur{Takeda}}\email{takeda@mist.i.u-tokyo.ac.jp}

\affil*[1]{\orgdiv{Department of Mathematical Informatics}, \orgname{The University of Tokyo}, \orgaddress{\street{Hongo}, \city{Bunkyo-ku}, \postcode{113-8656},  \state{Tokyo}, \country{Japan}}}

\affil[2]{\orgdiv{Center for Advanced Intelligence Project}, \orgname{RIKEN}, \orgaddress{\street{Nihonbashi}, \city{Chuo-ku}, \postcode{103-0027}, \state{Tokyo}, \country{Japan}}}


 
\abstract{ 
	We propose randomized subspace gradient methods for high-dimensional constrained optimization.
  While there have been similarly purposed studies on unconstrained optimization problems, there have been few on constrained optimization problems due to the difficulty of handling constraints.
  Our algorithms project gradient vectors onto a subspace that is a random projection of the subspace spanned by the gradients of active constraints.
	We determine the worst-case iteration complexity under linear and nonlinear settings and theoretically confirm that our algorithms can take a larger step size than their deterministic version.
	From the advantages of taking longer step and randomized subspace gradients, we show that our algorithms are especially efficient in view of time complexity when gradients cannot be obtained easily.     
	Numerical experiments show that they tend to find better solutions because of the randomness of their subspace selection.
	Furthermore, they performs well in cases where gradients could not be obtained directly, and instead, gradients are obtained using directional derivatives.
}

\keywords{Constrained optimization, randomized subspace methods, worst-case iteration complexity, gradient projection methods}



\maketitle

\section{Introduction}\label{sec1}

In this paper, we  consider the following constrained optimization problem:
\begin{equation}{\label{main problem}}
	\begin{split}
		&\min_{x\in \mathbb{R}^n} f(x)\\
		&\mathrm{s.t.} \; \mathcal{C} := \{ x ~ \vert ~ g_i(x)\le 0, \quad i = 1,...,m \}, 
	\end{split}
\end{equation}
where $f$ is $L$-smooth and $g_i$ are $L_g$-smooth, neither of which need be convex functions as long as an optimal solution exists.
There is a growing demand for solving large-scale constrained optimization problems in many applications such as machine learning, statistics,
and signal processing~\cite{tibshirani2005sparsity,lee1999learning,zafar2017fairness,komiyama2018icml,moldovan2012safe,achiam2017constrained};
for example, sparse optimization problems are often formulated as 
\begin{equation}
	\min_{x} L(x) \;\mathrm{s.t.} \;\mathcal{R}(x) \le s,
\label{reg_const_problem}
\end{equation}
where $L(x)$ is a loss function,  $\mathcal{R}$ is some sparsity-inducing norm such as $l_1$, and $s$ is some fixed positive integer value.
Recently, machine-learning models \eqref{reg_const_problem} including a fairness constraint~\cite{zafar2017fairness,komiyama2018icml} have attracted researchers' attention,
but difficulties have emerged in solving such large-scale problems.

For solving large-scale unconstrained problems, i.e., \eqref{reg_const_problem} with $\mathcal{C}=\mathbb{R}^n$,  \cite{kozak2021stochastic,gower2019rsn,hanzely2020stochastic,fuji2022randomized,chen2020randomized,berglund2022zeroth,cartis2022randomised} have proposed  subspace methods using random projection.
These methods update the iterate point as follows:
\begin{equation}{\label{eq:subspace gradient methods}}
	x_{k+1} = x_{k} + M_k d_k,
\end{equation}	
where $M_k$ is an $n\times d$ ($d<n$) random matrix. One of the difficulties with high-dimensional problems is calculating the gradient $\nabla f$. Although there are some methods that calculate gradients, such as automatic differentiation that is popular in machine learning, 
when the objective function has a complicated structure, backward-mode automatic differentiation leads to an explosion in memory usage~\cite{margossian2019review}.
Kozak et al.~\cite{kozak2021stochastic} proposed a stochastic subspace descent method combining gradient descent with random projections to overcome the difficulties of calculating gradients. The method uses the direction $d_k = -M_k^\top \fgrad$, 
and it was shown that $d_k$  
can be computed by using finite-difference or forward-mode automatic differentiation of the directional derivative. 
To obtain the full gradients by using finite difference or forward-mode automatic differentiation, we need to evaluate the function values $n$ times. On the other hand, the projected gradient can be computed by evaluating the function values only $d$ times.

Compared with unconstrained problems, subspace optimization algorithms using random projections have made little progress in solving constrained problems.
To the best of our knowledge, apart from \cite{cartis2022bound}, there is no paper on subspace optimization algorithms 
using (\ref{eq:subspace gradient methods}) for constrained optimization \eqref{main problem}.
Here, Cartis et al.~\cite{cartis2022bound} proposed a general framework to 
investigate a general random embedding framework for bound-constrained global optimization of $f$ with low effective dimensionality. 
The framework projects the original problem onto a random subspace and  solves the reduced subproblem in each iteration:
\[
 \min_{d} f(x_{k} + M_k d) ~~ \mbox{subject to}~ x_{k} + M_k d \in \mathcal{C}.
\]
These subproblems need to be solved to some required accuracy by using a deterministic global optimization algorithm.

In this study, we propose gradient-based subspace methods for constrained optimization. The descent direction $d_k$ is obtained without solving any subproblems, by projecting the gradient vector $\nabla f$
onto a subspace that is a random projection of the subspace spanned by the gradients of the active constraints. 
A novel property of our algorithms is that when the dimension $n$ is large enough, they are almost unaffected by the constraint boundary;
because of this, they can take a longer step than their deterministic versions. 
In standard constrained optimization methods, the iterates become difficult to move when they get close to the constraint boundary.
Our methods randomly update the subspace where the iterate is reconstructed in each iteration, which makes it possible for them to take a large step. 

We present two randomized subspace gradient algorithms:
one is specialized for linear constrained optimization problems and the other is able to handle nonlinear constraints.
On linearly constrained problems, these algorithms work very similarly to the gradient projection method~\cite{rosen1960gradient} if random projections are not used.
The gradient projection method (GPM)~\cite{rosen1960gradient,du1990rosen} is one of the active set methods. Under linear constraints, GPM projects the gradients $\nabla f$ onto a subspace spanned by the gradients of the active constraints. The projected gradient does not change the function values of active constraints. 
Hence, the advantage of GPM is that all of the updated points are feasible. 
Rosen~\cite{rosen1961gradient} augmented GPM to make it able to handle nonlinear constraints and proved global convergence. 
Also for nonlinear constraints, a generalized gradient projection method (GGPM)~\cite{gao1996generalized,wang2013generalized} was derived from GPM; it does not require an initial feasible point. 

Our algorithms need $O(\frac{n}{d}\varepsilon^{-2})$ iterations to reach an approximate Karush-Kuhn-Tucker~(KKT), while
other standard gradient-algorithms for solving unconstrained problems need $O(\varepsilon^{-2})$ iterations.
While the theoretical worst-case iteration complexity increases because of the subspace projection, the gradient computation at each iteration costs less;
when gradients cannot be obtained easily, our algorithms have the same advantage as stochastic subspace gradients~\cite{kozak2021stochastic} by calculating the gradients with $d$ directional derivatives.
Due to their advantages of taking longer step and reducing the computational complexity of calculating the gradients, our algorithms are especially efficient in view of time complexity. 
This is because when gradients are obtained by directional derivatives, ours need $O(d) \times O(\frac{n}{d}\varepsilon^{-2})$ function evaluations to reach an approximate KKT, which is the same as the number of evaluations in standard gradient-algorithms for solving unconstrained problems ($O(n) \times O(\varepsilon^{-2})$).
Furthermore, our algorithms can reduce the worst time complexity to reach an approximate KKT from $O(n) \times O(\max{(\varepsilon^{-2},(u_g^{\varepsilon})^{-1}\varepsilon^{-2})})$ to $O(d) \times O(\frac{n}{d}\varepsilon^{-2})$ compared to their deterministic version when gradients are obtained by directional derivatives. 
$u_g^{\varepsilon}$ represents a particular value that is dependent on the given constraints and the parameter $\varepsilon$. In the case where all constraints are linear functions, $u_g^{\varepsilon}$ simplifies to $\varepsilon$.

In numerical experiments, we show that our algorithms work well when the gradient of the objective functions cannot be calculated. 
The numerical results indicate that our algorithms tend to find better solutions than deterministic algorithms, because they can search for solutions randomly in a wide space without being affected by the luck involved in the initial solution selection.

\section{Preliminaries}
\subsection{Notations}
In this paper, we denote the optimum of (\ref{main problem}) by $x^*$.
The vector norm $\|\cdot\|$ is assumed to be the $l_2$ norm and the matrix norm is the operator norm. $\|\cdot\|_F$ denotes the Frobenius norm. $\lambda_{\mathrm{min}}(A)$ and $\lambda_{\mathrm{max}}(A)$ are the minimum eigenvalue and maximum eigenvalue of a matrix $A$, respectively.

For a vector $a$, we define 
$[a]_+$ be a vector whose $i$-th entry is given by $  [a_i]_+ := \max(0,a_i)$. 
$\bm{1}$ denotes an all-ones vector and $\chi_S(x)$ denotes a step function for all $x \in \mathbb{R}$;
\[
	\chi_S(x) = \left\{
		\begin{array}{cc}
			1 & (x\in S),\\
			0 & (x\notin S).\\
	\end{array}\right.
\]
$\chi_{-}$ and $\chi_{+}$ are step functions with $S = \{x \vert x\le 0\}$ and $S = \{x\vert x > 0\}$, respectively. 
For a vector $a$, we define $\chi_{+}(a)$ (or  $\chi_{-}(a)$) to be a vector whose $i$-th entry is $\chi_{+}(a_i)$ (or $\chi_{-}(a_i)$, respectively).

\subsection{Key lemma}
The following lemma implies that a random projection defined by a random matrix $P$ nearly preserves the norm of any given vector $x$ with arbitrarily high probability. 
It is a variant of the Johnson-Linderstrauss lemma \cite{jllemma}.
\begin{lemma}{\label{lemma:Johnson}}~\cite{vershynin2018high}
	Let $P\in \mathbb{R}^{d\times n}$ be a random matrix whose entries are independently drawn from $\Normal$.
	Then for any $x\in \mathbb{R}^n$ and $\varepsilon\in (0,1)$, we have 
	\[
	\mathrm{Prob}[(1-\varepsilon)\|x\|^2\le \frac{1}{d}\|Px\|^2\le (1+\varepsilon)\|x\|^2]\ge 1-2\exp{(-C_0\varepsilon^2 d)},
	\]
	whose $C_0$ is an absolute constant.
\end{lemma}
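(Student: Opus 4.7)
By homogeneity of both sides of the inequality in $x$, I would first reduce to the unit vector case $\|x\|=1$; once the bound is established there, rescaling recovers the general statement. So the task becomes showing that $\tfrac{1}{d}\|Px\|^2$ concentrates near $1$ with the stated probability.

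Next I would exploit the rotational invariance of Gaussian vectors. Each coordinate of $Px$ has the form $(Px)_i = \sum_{j=1}^n P_{ij}x_j$, which is a linear combination of independent standard Gaussians with weights $x_j$, so $(Px)_i \sim \mathcal{N}(0,\|x\|^2)=\mathcal{N}(0,1)$. Independence of the rows of $P$ then yields that the coordinates $(Px)_1,\dots,(Px)_d$ are i.i.d.\ standard Gaussians. Consequently
\[
\|Px\|^2 = \sum_{i=1}^d Z_i^2, \qquad Z_i \stackrel{\mathrm{iid}}{\sim} \mathcal{N}(0,1),
\]
which is exactly a $\chi^2$ random variable with $d$ degrees of freedom.

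The core of the argument is then a concentration inequality for $\chi^2_d$: for any $\varepsilon\in(0,1)$,
\[
\mathrm{Prob}\!\left[\Bigl|\tfrac{1}{d}\sum_{i=1}^d Z_i^2 - 1\Bigr| \ge \varepsilon\right] \le 2\exp(-C_0 \varepsilon^2 d).
\]
I would obtain this via the standard Chernoff/MGF route: for $0\le t<1/2$, $\mathbb{E}[\exp(t(Z_i^2-1))] = \exp(-t)/\sqrt{1-2t}$, and a Taylor estimate gives $\log \mathbb{E}[\exp(t(Z_i^2-1))] \le t^2/(1-2t) \le 2t^2$ on a suitable range of $t$. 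Equivalently one can cite the fact that $Z_i^2-1$ is sub-exponential and invoke Bernstein's inequality. Applying Markov's inequality to both $\exp(t\sum(Z_i^2-1))$ and $\exp(-t\sum(Z_i^2-1))$ and optimizing $t$ over $\varepsilon\in(0,1)$ yields the two-sided bound with an absolute constant $C_0$.

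The main technical obstacle is the $\chi^2$ concentration step; everything else is essentially bookkeeping. However, since this bound is a standard textbook result (the statement and proof appear in Vershynin's \emph{High-Dimensional Probability}, which is cited for the lemma), I would simply invoke it rather than redoing the MGF calculations in detail, so the complete proof reduces to the two short reductions above followed by a citation.
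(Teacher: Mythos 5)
Your argument is correct: homogeneity reduces to $\|x\|=1$, rotational invariance makes $\|Px\|^2$ a $\chi^2_d$ variable, and the sub-exponential/Bernstein concentration bound gives the two-sided tail $2\exp(-C_0\varepsilon^2 d)$ for $\varepsilon\in(0,1)$. The paper itself offers no proof of this lemma, only the citation to Vershynin, and your route is exactly the standard argument found in that reference, so the proposal matches the intended justification.
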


\section{Algorithm for linear inequality constraints}\label{section:Algorithm for linear inequality constraints}
In this section, we describe our randomized subspace gradient method for solving linear constrained problems (RSG-LC),
which is listed as Algorithm~\ref{Secondary Algorithm} below.

\begin{algorithm}[tb]
	\caption{Randomized Subspace Gradient Method for Linear Constrained Problems (RSG-LC)}
	\label{Secondary Algorithm}
	\begin{algorithmic}
		\State INPUT: $x_0$ (a feasible point), step size $h>0$, reduced dimension $d$, $\varepsilon_0,\delta_1,\varepsilon_2>0, 1>\beta >0$.
		\For{$k=0,1,2,...$}
		\State $\alpha_k = h$
		\State Set $M_k = \frac{1}{n}P_k^\top$ by sampling a random matrix $P_k\in \mathbb{R}^{d\times n} \sim \No$.
                \State Set $G_k = \left(\nabla g_i(x_k)\right)_{i\in \mathcal{A}_k}\in \mathbb{R}^{n\times \vert \mathcal{A}_k \vert}$ by using $\mathcal{A}_k$ defined in \eqref{eq:active_set}.
\State Compute
\begin{equation}
\lambda^{(k)} = -(G_k^\top M_kM_k^\top G_k)^{-1}G_k^\top M_kM_k^\top \fgrad
\label{def_lambda}
\end{equation}
and
\begin{equation}
  d_k = -M_k^\top \left(\fgrad+G_k\lambda^{(k)}\right).
 \label{def_dk_1}
\end{equation}
		\If {$\|d_k\|\le \delta_1$}
		\If {$\min_i \lambda^{(k)}_i\ge -\varepsilon_2$}
		\State terminate.
		\Else
		\State
                \begin{equation}
                  d_k = - \frac{d}{n}M_k^\top G_k(G_k^\top M_k M_k^\top G_k)^{-1} \left[-\lambda^{(k)}\right]_+
                  \label{def_dk_2}
                \end{equation}
		\EndIf
		\EndIf
		\While{$x_k + \alpha_kM_kd_k$ is not feasible}
		\State $\alpha_k\leftarrow \beta \alpha_k$
		\EndWhile
		\State $x_{k+1}\leftarrow x_k + \alpha_kM_kd_k$
		\EndFor
	\end{algorithmic}
\end{algorithm}

\subsection{Outline of our algorithm: RSG-LC}

Let $\No$ denote the set of Gaussian matrices of size $d\times n$ whose entries are independently sampled from $\Normal$,
and define $M_k = \frac{1}{n}P_k^\top$, where $P_k$ is a Gaussian random matrix sampled from $\No$.

\paragraph{Definition of $\mathcal{A}_k$}
Let $\mathcal{A}_k :=\mathcal{A}(x_k) $ be the index set of active constraints such that the inequality constraints are almost
satisfied by equality at the iterate $x_k$.
The active set usually refers to the set of indices whose constraints satisfy $g_i(x)=0$.
We use loose criteria for the active set of our randomized subspace algorithm:
\begin{equation}
  \mathcal{A}(x) = \left\{i \in \{1,2,\cdots, m\}  \vert \  
[-g_i(x)]_+
\le \varepsilon_0 \| \nabla g_i(x)\| \right\}
\label{eq:active_set}
\end{equation}
using a parameter $\varepsilon_0$ $(>0)$.

Section~\ref{sec:feasibility} shows that the definition of $\mathcal{A}_k$ makes the iterates $\{x_k\}$
all feasible with high probability, and
that the step size  $\alpha_k$ is larger than when the random matrices are not used.
Notice that 
we can replace $\nabla g_i$ in $\mathcal{A}_k$ with the projected gradients $M_k^\top \nabla g_i$ to reduce the computation costs when calculating $\nabla g_i$ is difficult.
This is because, from Lemma~\ref{lemma:Johnson},
$\frac{[-g_i(x_k)]_+}{\|M_k^\top \nabla g_i(x_k)\|} \approx \frac{n}{\sqrt{d}}\frac{[-g_i(x_k)]_+}{\|\nabla g_i(x_k)\|}$
holds with high probability. 
Therefore, the active set $\mathcal{A}_k$ using projected gradients can be almost the same as the set using the full gradients by adjusting $\varepsilon_0$. 
For the sake of simplicity, we define $\frac{[-g_i(x_k)]_+}{\|\nabla g_i(x_k)\|} \leq \varepsilon_0$ to be the active set $\mathcal{A}_k$.

\paragraph{Update of Iterates}
Algorithm~\ref{Secondary Algorithm} calculates the sequence $\{x_k\}$ by using the step size $\alpha_k$ and
the Lagrange multiplier $\lambda^{(k)} \in \mathbb{R}^{\vert \mathcal{A}_k \vert}$ of \eqref{def_lambda} corresponding to
  constraints in $\mathcal{A}_k$ as 
\begin{eqnarray}\label{eq:x{k+1}-x_k}
		\notag
		&x_{k+1}&= x_{k}- \alpha_kM_kM_k^\top \left(\fgrad + G_k\lambda^{(k)}\right)\\
		&&= x_{k} -\alpha_kM_k\left( I - M_k^\top G_k(G_k^\top M_kM_k^\top G_k)^{-1}G_k^\top M_k\right)M_k^\top \fgrad
\end{eqnarray}
or 
\begin{eqnarray}\label{eq:x{k+1}-x_k_2}
x_{k+1} = x_k - \frac{d}{n} \alpha_kM_k M_k^\top G_k(G_k^\top M_k M_k^\top G_k)^{-1} \left[-\lambda^{(k)}\right]_+,
\end{eqnarray}
where $G_k = \left(\nabla g_i(x_k)\right)_{i\in \mathcal{A}_k}\in \mathbb{R}^{n\times \vert \mathcal{A}_k \vert}$.
This corresponds to taking $d_k$ as \eqref{def_dk_1} or \eqref{def_dk_2}.

The search directions $x_{k+1}- x_k$ in \eqref{eq:x{k+1}-x_k} and \eqref{eq:x{k+1}-x_k_2} are descent
directions with high probability as shown in
Propositions~\ref{Proposition:dec_obj} and \ref{Proposition:obj_dec_2} in Section~\ref{sec:decreas_obj}.
Propositions~\ref{proposition:Feasibility with linear inequality} and \ref{proposition:Feasibility with linear inequality2} in Section~\ref{sec:feasibility} ensure that if the $\alpha_k$ are chosen to be less than some threshold, all points in $\{x_{k}\}$ are feasible. Hence, the while-loop reducing the value of $\alpha_k$ stops after a fixed number of iterations.
Theoretically, \eqref{eq:x{k+1}-x_k} reduces the objective function more than \eqref{eq:x{k+1}-x_k_2} does
at each iteration.  
Therefore, the algorithm first computes \eqref{eq:x{k+1}-x_k} and if the search direction is small enough and
the Lagrange multiplier $\lambda^{(k)}$ is not nonnegative, \eqref{eq:x{k+1}-x_k_2} is used for the next iterate $x_{k+1}$. 

The search direction in \eqref{eq:x{k+1}-x_k} is regarded as the projected gradient vector 
onto a subspace that itself is a random projection of the subspace spanned by the gradients of active constraints.
Note that if we ignore the random matrix $M_k$ (i.e., by setting $M_k=I$), the update rule of \eqref{eq:x{k+1}-x_k} becomes
\[
x_{k+1} = x_k - \alpha_k  \left(I- G_k  (G_k^\top G_k)^{-1}G_k^\top \right)\fgrad
\]
and $\left(I- G_k  (G_k^\top G_k)^{-1}G_k^\top \right)\fgrad$ is 
the projected gradient onto the subspace spanned by the gradients of the active constraints in $\mathcal{A}_k$.
If $\mathcal{A}_k$ is an active set in the usual sense
defined by the valid constraints, the projected gradient is the same as the one used in the gradient projection method (GPM)~\cite{rosen1960gradient}. 

By the definition of $d_k$ of (\ref{def_dk_1}), if $\|d_k\| \approx 0$, then $\|\fgrad + G_k \lambda^{(k)}\|\approx 0$ holds by Lemma~\ref{lemma:Johnson}. 
If $\lambda^{(k)} \ge 0$ holds, the resulting point is an approximate Karush-Kuhn-Tucker (KKT) point.
As shown later in Theorem~\ref{theorem:global convergence},
given inputs $d, \varepsilon_0,\varepsilon_2$ and $\delta_1 = \sqrt{\frac{d(1-\varepsilon)}{n^2}}\varepsilon_1$,
Algorithm~\ref{Secondary Algorithm} generates
for the original problem \eqref{main problem} with large $n$ an output $x_k$ that is with high probability an $(\varepsilon_1,\varepsilon_2,O(\varepsilon_0))$-KKT point
within $O(\frac{n}{d \min(\varepsilon_1^2,\varepsilon_2^2)})$ iterations.

\subsection{Decrease in objective value}\label{sec:decreas_obj}
We will make the following assumptions.
\begin{assume} 
  \begin{enumerate}[label=(\roman*)]
		\item $f$ is $L$-smooth, i.e., $\|\nabla f(x)-\nabla f(y)\| \leq L\|x-y\|$ for any $x,y \in \mathbb{R}^n$.\label{Objective is L-smooth}
		\item The level set $\{x\in \mathcal{C}\vert f(x)\le f(x_0)\}$ is compact. \label{Level Set is compact or feasible set is compact}
	\end{enumerate}
\label{Assumption:Objective}
\end{assume}
\ref{Objective is L-smooth} and \ref{Level Set is compact or feasible set is compact} together imply that
$\|\fgrad\|$ is bounded, i.e., $\|\fgrad\|\le U_f$ for some value $U_f>0$.

\begin{assume}\label{Assumption:existence of inverse}
  \begin{enumerate}[label=(\roman*)]
    \item
  The vectors $\nabla g_i(x)$, $\forall i \in \mathcal{A}_k$,  of the active constraints at $x_k \in \mathcal{C}$ are linearly independent. \label{Constraints are independent}
\item The reduced dimension $d$ is larger than the number of active constraints $\vert \mathcal{A}_k\vert$.
  \label{Assumption:reduced dimension is larger than active sets}
  \end{enumerate}
  \label{Assumption:MatRank}
\end{assume}

Notice that Assumption~\ref{Assumption:MatRank} implies that $M_k^\top G_k$ has full row rank with probability 1,
which ensures the existence of $(G_k^\top M_kM_k^\top G_k)^{-1}$.
Here,  we define 
\[ 
\lambda^*_{\mathrm{min}} =
\min_{\substack{\mathcal{A} \subset \{1,2,...,m\},\vert \mathcal{A} \vert <n, x\in \mathcal{C}, \\
\nabla g_i(x) (i\in \mathcal{A}) \mathrm{\;are\;linear\;independent}}}\lambda_{\mathrm{min}}(G_{\mathcal{A}}(x)^\top G_{\mathcal{A}}(x)),
\]
where $G_\mathcal{A}(x)  = (\nabla g_i(x))_{i \in \mathcal{A}}$.
Obviously, $\lambda^*_{\mathrm{min}} > 0$ and
Assumption~\ref{Assumption:MatRank}\ref{Constraints are independent} ensure that $\lambda^*_{\mathrm{min}} \leq \lambda_{\mathrm{min}}(G_k^\top G_k)$.
By defining 
\begin{equation}
	\label{def:Z_k}
	Z_k:= I-M_k^\top G_k (G_k^\top M_kM_k^\top G_k)^{-1} G_k^\top M_k,
\end{equation}
we can rewrite the next iterate (\ref{eq:x{k+1}-x_k}) as $x_{k+1} = x_k - \alpha_k M_k Z_k M_k^\top \fgrad$.

\begin{proposition}\label{Proposition:dec_obj} 
  Under Assumptions~\ref{Assumption:Objective}\ref{Objective is L-smooth} and \ref{Assumption:MatRank},  
	when $d_k = - M_k^\top \left(\fgrad+G_k\lambda^{(k)}\right)$ of \eqref{def_dk_1} in Algorithm~\ref{Secondary Algorithm}, then
	\[
		f(x_{k+1})\le f(x_k)-\left(\alpha_k - \frac{L\alpha_k^2(1+\varepsilon)}{2n}\right)\|Z_kM_k^\top \fgrad\|^2
	\]
	holds with probability at least $1-2\exp{(-C_0 \varepsilon^2 n)}$. 
\end{proposition}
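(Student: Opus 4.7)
The plan is to substitute the closed form of the iterate into the $L$-smooth descent inequality and then use the Johnson-Lindenstrauss bound of Lemma~\ref{lemma:Johnson} to absorb the single place where randomness enters. A direct algebraic manipulation, plugging \eqref{def_lambda} into \eqref{def_dk_1}, shows
\[
x_{k+1} - x_k \;=\; -\alpha_k\, M_k M_k^\top\bigl(\fgrad + G_k \lambda^{(k)}\bigr) \;=\; -\alpha_k\, M_k Z_k M_k^\top\, \fgrad,
\]
with $Z_k$ as in \eqref{def:Z_k}. This step is deterministic and only needs $G_k^\top M_k M_k^\top G_k$ to be invertible, which Assumption~\ref{Assumption:MatRank} guarantees almost surely.

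Next I would apply the standard descent inequality coming from the $L$-smoothness of $f$ (Assumption~\ref{Assumption:Objective}\ref{Objective is L-smooth}):
\[
f(x_{k+1}) \;\le\; f(x_k) - \alpha_k\bigl\langle \fgrad,\, M_k Z_k M_k^\top \fgrad\bigr\rangle + \frac{L\alpha_k^2}{2}\bigl\|M_k Z_k M_k^\top \fgrad\bigr\|^2.
\]
The inner product simplifies cleanly because $Z_k$ is by construction the orthogonal projector in $\mathbb{R}^d$ onto the orthogonal complement of the column space of $M_k^\top G_k$; being symmetric and idempotent it gives $\langle \fgrad, M_k Z_k M_k^\top \fgrad\rangle = \|Z_k M_k^\top \fgrad\|^2$, which is already the shape of the target bound.

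For the quadratic term the plan is to factor out the operator norm of $M_k$, writing $\|M_k Z_k M_k^\top \fgrad\|^2 \le \|M_k\|^2\|Z_k M_k^\top \fgrad\|^2$, and then to bound $\|M_k\|^2 = \|P_k\|^2/n^2 \le (1+\varepsilon)/n$ with probability at least $1-2\exp(-C_0\varepsilon^2 n)$. This is exactly the upper half of Lemma~\ref{lemma:Johnson} read for $P_k^\top \in \mathbb{R}^{n\times d}$ in its worst-case direction. Assembling the three pieces on the same high-probability event delivers the stated inequality with prefactor $\alpha_k - L\alpha_k^2(1+\varepsilon)/(2n)$.

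The step I expect to be the real obstacle is the last one, because the vector $w := Z_k M_k^\top \fgrad$ is not independent of $P_k$: it depends on the random matrix through both $Z_k$ and $M_k^\top \fgrad$, so Lemma~\ref{lemma:Johnson} cannot be invoked verbatim with $w$ as a fixed input. I would sidestep this by using the uniform (operator-norm) tail bound on $M_k$ instead of a $w$-specific JL estimate; this bound holds over all input directions and follows from the same sub-Gaussian concentration underlying Lemma~\ref{lemma:Johnson} (via a standard covering-net argument) with the same probability scaling $1-2\exp(-C_0\varepsilon^2 n)$ up to constants. This decouples the randomness from the $P_k$-dependent vector $w$ and closes the argument.
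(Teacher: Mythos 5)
Your proposal follows the paper's proof almost verbatim in its main steps: the substitution $x_{k+1}-x_k=-\alpha_k M_kZ_kM_k^\top\fgrad$, the $L$-smooth descent inequality, and the idempotence of the orthogonal projector $Z_k$ giving $\langle\fgrad, M_kZ_kM_k^\top\fgrad\rangle=\|Z_kM_k^\top\fgrad\|^2$ are all exactly what the paper does. The one place you deviate is the quadratic term: the paper simply writes $\|M_kZ_kM_k^\top\fgrad\|^2=\frac{1}{n^2}\|P_k^\top Z_kM_k^\top\fgrad\|^2\le\frac{1+\varepsilon}{n}\|Z_kM_k^\top\fgrad\|^2$ by invoking Lemma~\ref{lemma:Johnson} with the vector $Z_kM_k^\top\fgrad$, ignoring the fact that this vector depends on $P_k$ — precisely the dependence issue you flag. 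Your substitute, a uniform operator-norm bound $\|M_k\|^2\le(1+\varepsilon)/n$, does decouple the randomness from the direction, and is the more rigorous route. The trade-off you should be aware of is quantitative: the operator norm of the $n\times d$ Gaussian matrix $P_k^\top$ concentrates around $\sqrt{n}+\sqrt{d}$, not $\sqrt{n}$, so the uniform bound gives $\|M_k\|^2\le\frac{(1+\sqrt{d/n}+t/\sqrt{n})^2}{n}$; to recover the stated prefactor $(1+\varepsilon)/n$ with probability $1-2\exp(-C_0\varepsilon^2 n)$ you implicitly need $\sqrt{d/n}$ to be small relative to $\varepsilon$, a condition the proposition as stated does not impose. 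So your argument proves the claim in the regime $d\lesssim\varepsilon^2 n$ (the regime where the method is actually used), whereas the paper's literal argument claims it for all $d<n$ but rests on an application of Lemma~\ref{lemma:Johnson} to a non-fixed vector. Both versions are acceptable provided the caveat is stated; yours is the more defensible of the two.
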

\begin{proof}
	Since $f$ is $L$-smooth from Assumption~\ref{Assumption:Objective}\ref{Objective is L-smooth}, we have
	\begin{equation}\label{eq:L-smooth}
		f(x_{k+1})-f(x_k) \le \langle \fgrad, x_{k+1}-x_{k} \rangle  +\frac{L}{2} \|x_{k+1}-x_{k}\|^2.
	\end{equation}
	We also have $x_{k+1}-x_{k}=- \alpha_k M_k Z_k M_k^\top \fgrad$ from (\ref{eq:x{k+1}-x_k}).
	Using this equality in (\ref{eq:L-smooth}), we obtain
	\begin{align*}
		\notag
		&f(x_{k+1})-f(x_k) \le  \\
		&\quad -\alpha_k \fgrad^\top M_kZ_kM_k^\top \fgrad +\frac{\alpha_k^2L}{2}\|M_kZ_kM_k^\top \fgrad\|^2.
	\end{align*}
	By definition, $Z_k$ is an orthogonal projection matrix; hence, $Z_k^2 = Z_k$. Therefore, $\fgrad^\top M_kZ_kM_k^\top \fgrad=\|Z_kM_k^\top \fgrad\|^2$.
	Furthermore, by noticing that $ M_k = \frac{1}{n}P_k^\top$ and using Lemma~\ref{lemma:Johnson},
	we obtain that
	$ \| M_kZ_kM_k^\top \fgrad\|^2 = \frac{1}{n^2}\|P_k^\top Z_kM_k^\top \fgrad \|^2\le  \frac{1+\varepsilon}{n}\|Z_kM_k^\top \fgrad\|^2.$
	With these relations, we find that
	\[
	f(x_{k+1})-f(x_k) \le -\left(\alpha_k -\frac{\alpha_k^2L(1+\varepsilon)}{2n} \right) \|Z_kM_k^\top \fgrad\|^2
	\]
	with probability at least $1-2\exp{(-C_0\varepsilon^2n)}$.
\end{proof}

Because $\|Z_kM_k^\top \fgrad\|=\|M_k^\top (\fgrad + G_k\lambda^{(k)})\|$, which is derived from \eqref{def_lambda} and \eqref{def:Z_k},
Proposition~\ref{Proposition:dec_obj} implies that the function value $f(x_k)$ strictly decreases
unless $d_k = 0$.

\begin{proposition}\label{Proposition:obj_dec_2}
	Under Assumptions~\ref{Assumption:Objective}\ref{Objective is L-smooth} and \ref{Assumption:MatRank},
	when $d_k = - \frac{d}{n}M_k^\top G_k(G_k^\top M_k M_k^\top G_k)^{-1} [-\lambda^{(k)}]_+$
	of \eqref{def_dk_2} in Algorithm~\ref{Secondary Algorithm}, then
	\[
	f(x_{k+1})\le f(x_k) - \frac{d}{n}\left(\alpha_k - \frac{L\alpha_k^2(1+\varepsilon)}{2(1-\varepsilon)\lambda_{\mathrm{min}}(G_k^\top G_k)}\right)\|[-\lambda^{(k)}]_+\|^2
	\]
	with probability at least $1-2\exp{(-C_0\varepsilon^2n)}-2\exp{(-C_0\varepsilon^2d)}$.
\end{proposition}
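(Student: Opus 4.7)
The plan is to mirror the structure of Proposition~\ref{Proposition:dec_obj}: start from $L$-smoothness, then handle the linear and quadratic terms separately, invoking Lemma~\ref{lemma:Johnson} twice to explain the two exponential terms in the failure probability. Specifically, I start with
\[
f(x_{k+1})-f(x_k) \le \langle \nabla f(x_k), x_{k+1}-x_k\rangle + \tfrac{L}{2}\|x_{k+1}-x_k\|^2,
\]
and substitute $x_{k+1}-x_k = \alpha_k M_k d_k = -\tfrac{d}{n}\alpha_k M_k M_k^\top G_k (G_k^\top M_k M_k^\top G_k)^{-1}[-\lambda^{(k)}]_+$.

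For the linear term, I would use the definition \eqref{def_lambda} of $\lambda^{(k)}$ to rewrite $G_k^\top M_k M_k^\top \nabla f(x_k) = -(G_k^\top M_k M_k^\top G_k)\lambda^{(k)}$, so that after cancellation the linear term collapses to $\tfrac{d}{n}\alpha_k\,\lambda^{(k)\top}[-\lambda^{(k)}]_+$. The elementary identity $\lambda_i\cdot[-\lambda_i]_+ = -([-\lambda_i]_+)^2$ then gives exactly $-\tfrac{d}{n}\alpha_k\|[-\lambda^{(k)}]_+\|^2$. This step is purely algebraic and should be painless.

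The quadratic term is where the randomness enters. I write $x_{k+1}-x_k = -\tfrac{d}{n}\alpha_k M_k u$ where $u := M_k^\top G_k (G_k^\top M_k M_k^\top G_k)^{-1}[-\lambda^{(k)}]_+ \in \mathbb{R}^d$, so that $\|x_{k+1}-x_k\|^2 = \tfrac{d^2}{n^2}\alpha_k^2 \|M_k u\|^2$. Applying Lemma~\ref{lemma:Johnson} to $P_k^\top$ acting on $u$ yields $\|M_k u\|^2 \le \tfrac{1+\varepsilon}{n}\|u\|^2$ with probability $\ge 1-2\exp(-C_0\varepsilon^2 n)$. A short calculation gives $\|u\|^2 = [-\lambda^{(k)}]_+^\top (G_k^\top M_k M_k^\top G_k)^{-1} [-\lambda^{(k)}]_+$, so the remaining task is to upper-bound $(G_k^\top M_k M_k^\top G_k)^{-1}$. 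Writing $y^\top (G_k^\top M_k M_k^\top G_k) y = \tfrac{1}{n^2}\|P_k G_k y\|^2$ and applying Lemma~\ref{lemma:Johnson} to the vector $G_k y$, I obtain $G_k^\top M_k M_k^\top G_k \succeq \tfrac{(1-\varepsilon)d}{n^2} G_k^\top G_k$, hence $(G_k^\top M_k M_k^\top G_k)^{-1} \preceq \tfrac{n^2}{(1-\varepsilon)d}(G_k^\top G_k)^{-1}$, with probability $\ge 1-2\exp(-C_0\varepsilon^2 d)$. Combining with $(G_k^\top G_k)^{-1}\preceq \lambda_{\min}(G_k^\top G_k)^{-1}I$ gives $\|u\|^2 \le \tfrac{n^2}{(1-\varepsilon)d\,\lambda_{\min}(G_k^\top G_k)}\|[-\lambda^{(k)}]_+\|^2$, and chaining yields $\|x_{k+1}-x_k\|^2 \le \tfrac{d\alpha_k^2(1+\varepsilon)}{n(1-\varepsilon)\lambda_{\min}(G_k^\top G_k)}\|[-\lambda^{(k)}]_+\|^2$. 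Plugging into the $L$-smooth inequality and factoring out $d/n$ produces the stated bound, and a union bound over the two JL events gives the claimed failure probability $2\exp(-C_0\varepsilon^2 n) + 2\exp(-C_0\varepsilon^2 d)$.

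The main subtlety I anticipate is the second JL application: I need Lemma~\ref{lemma:Johnson} to hold not for one fixed vector but uniformly over $y$ (equivalently, for all of $\mathrm{range}(G_k)$), since the vector $G_k y$ plays the role of the input. In spirit this is a subspace embedding rather than a pointwise Johnson--Lindenstrauss statement, and Assumption~\ref{Assumption:reduced dimension is larger than active sets} ensures that $d$ is large relative to $|\mathcal{A}_k|$ so that such a bound is available at essentially the same rate. I will follow the paper's convention of invoking Lemma~\ref{lemma:Johnson} in this matrix/subspace form. A parallel subtlety appears in the first JL application, since $u$ itself depends on $P_k$; again the justification is a subspace-JL statement on $\mathbb{R}^d$, whose failure probability remains $O(\exp(-C_0\varepsilon^2 n))$ and does not affect the final expression.
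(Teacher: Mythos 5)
Your proposal is correct and follows essentially the same route as the paper: the linear term collapses to $-\tfrac{d}{n}\alpha_k\|[-\lambda^{(k)}]_+\|^2$ via the normal equations defining $\lambda^{(k)}$, and the quadratic term is controlled by one application of Lemma~\ref{lemma:Johnson} to $M_k$ acting on $d_k$ plus the eigenvalue bound $\lambda_{\min}(G_k^\top M_k M_k^\top G_k)\ge \tfrac{d(1-\varepsilon)}{n^2}\lambda_{\min}(G_k^\top G_k)$, exactly as in the paper's \eqref{eq:descent direction2 in Algorithm2}--\eqref{eq:norm of second direction}. The subspace-versus-pointwise JL subtlety you flag is real but is present in the paper's own argument as well (which applies the lemma to the $P_k$-dependent eigenvector $w$), so your treatment is at the same level of rigor.
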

\begin{proof}
We can confirm that $M_k d_k$ is a descent direction by using the definition \eqref{def_lambda} of $\lambda^{(k)}$ for the second equality:
\begin{align}{\label{eq:descent direction2 in Algorithm2}}
	\notag
	\fgrad^\top M_k d_k 
	&= -\frac{d}{n}\fgrad^\top M_k M_k^\top G_k(G_k^\top M_k M_k^\top G_k)^{-1} [-\lambda^{(k)}]_+\\
	& \notag =\frac{d}{n}\lambda^{(k)\top} [-\lambda^{(k)}]_+ \\
	&= -\frac{d}{n}\|[-\lambda^{(k)}]_+\|^2 <0.
\end{align}

Now let us evaluate $f(x_{k+1}) - f(x_k)$ using \eqref{eq:L-smooth} 
with $x_{k+1}-x_k=\alpha_k M_kd_k$ as 
\begin{equation}		
		f(x_{k+1})  \le f(x_k) + \alpha_k\fgrad^\top M_k d_k +\frac{L\alpha_k^2}{2}\| M_k d_k\|^2. 
\label{eq:funcdic_Lsmooth}
\end{equation}
Notice first that $M_k^\top M_k = \frac{1}{n^2}P_kP_k^\top$ and Lemma~\ref{lemma:Johnson} give
	\begin{equation}{\label{eq:M_k d_k}}
		\| M_k d_k \|^2 \le \frac{1+\varepsilon}{n} \|d_k\|^2
	\end{equation}
	with probability at least $1-2\exp{(-C_0\varepsilon^2n)}$. Moreover, 
	\begin{align}{\label{eq:norm_dk}}
		\|d_k\|^2 &= \frac{d^2}{n^2} ([-\lambda^{(k)}]_+)^\top (G_k^\top M_k M_k^\top G_k)^{-1} [-\lambda^{(k)}]_+ \notag \\
	 &\le \frac{d^2}{n^2\lambda_{\min}(G_k^\top M_k M_k^\top G_k)}  \|[-\lambda^{(k)}]_+\|^2
	\end{align}
	holds from the definition of $d_k$. Let $w$ be the eigenvector corresponding to the minimum eigenvalue of $G_k^\top M_k M_k^\top G_k$. From Lemma~\ref{lemma:Johnson}, 
we have 
	\begin{align}{\label{eq:eig_linear}}
		\notag
		\lambda_{\mathrm{min}}(G_k^\top M_k M_k^\top G_k) &= w^\top (G_k^\top M_k M_k^\top G_k) w \\
		& \notag \ge \frac{d(1-\varepsilon)}{n^2} w^\top (G_k^\top G_k) w \\
		&\ge  \frac{d(1-\varepsilon)}{n^2} \lambda_{\mathrm{min}}(G_k^\top G_k)
	\end{align}
	with probability at least $1-2\exp{(-C_0\varepsilon^2d)}$. 
	Combining (\ref{eq:norm_dk}) and (\ref{eq:eig_linear}), we find that
	\begin{equation}{\label{eq:norm of second direction}}
		\|d_k\|^2 \le \frac{d}{(1-\varepsilon)\lambda_{\mathrm{min}}(G_k^\top G_k)}\|[-\lambda^{(k)}]_+\|^2. 
	\end{equation}
	From Assumption~~\ref{Assumption:MatRank}\ref{Constraints are independent}, $\lambda_{\min}(G_k^\top G_k)$ has a positive lower bound.
	\eqref{eq:funcdic_Lsmooth} together with (\ref{eq:descent direction2 in Algorithm2}) leads to
	\begin{alignat*}{2}
		\notag
		f(x_{k+1}) & \le 
 		f(x_k) - \alpha_k \frac{d}{n}\|[-\lambda^{(k)}]_+\|^2+\frac{L\alpha_k^2}{2}\| M_k d_k\|^2 &\\
		\notag
		& \le f(x_k) - \alpha_k \frac{d}{n}\|[-\lambda^{(k)}]_+\|^2 + \frac{L\alpha_k^2(1+\varepsilon)}{2n}\|d_k\|^2 &\\
		&\le  f(x_k) - \alpha_k \frac{d}{n}\|[-\lambda^{(k)}]_+\|^2 + \frac{dL\alpha_k^2(1+\varepsilon)}{2n(1-\varepsilon)\lambda_{\mathrm{min}}(G_k^\top G_k)}\|[-\lambda^{(k)}]_+\|^2,&
	\end{alignat*}
	with probability at least $1-2\exp{(-C_0\varepsilon^2n)}-2\exp{(-C_0\varepsilon^2d)}$. 
The second inequality follows from (\ref{eq:M_k d_k}) and the last inequality follows from (\ref{eq:norm of second direction}).
\end{proof}
Proposition~\ref{Proposition:obj_dec_2} shows that if $0 < \alpha_k <\frac{2(1-\varepsilon)\lambda_{\mathrm{min}}(G_k^\top G_k)}{(1+\varepsilon)L}$ and $\min_{i}\lambda_i^{(k)} < 0$, then $f(x_{k+1}) < f(x_k)$.

\subsection{Feasibility}\label{sec:feasibility}
In this section, we derive conditions under which that the sequence $\{x_k\}$ generated by Algorithm~\ref{Secondary Algorithm} is feasible. 
Assuming that the initial point is feasible, we update the iterate while preserving feasibility.
We prove the following lemma by utilizing the properties of linear constraints.
\begin{proposition}{\label{proposition:Feasibility with linear inequality}}
  Suppose that Assumptions~\ref{Assumption:Objective} and \ref{Assumption:MatRank} hold and  
	that all constraints $g_i$ are linear
  and $x_k$ is feasible.
	Then, if the step size $\alpha_k$ satisfies
	\[
	0\le \alpha_k \le \frac{\varepsilon_0}{U_f}\frac{n^2}{(1+\varepsilon)d}
	\]
	when $d_k = - M_k^\top (\nabla f(x_k)+ G_k \lambda^{(k)})$ of \eqref{def_dk_1} in Algorithm~\ref{Secondary Algorithm},
	$x_{k+1}$ is feasible with probability at least $1-2(m-\vert\mathcal{A}_k\vert+1)\exp{(-C_0\varepsilon_1^2d)}$.
\end{proposition}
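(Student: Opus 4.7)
The plan is to exploit linearity of each $g_i$ to write the change $g_i(x_{k+1})-g_i(x_k)=\nabla g_i^\top(x_{k+1}-x_k)$ exactly (no curvature terms), then handle the active and inactive indices separately. First I would introduce $v_k:=\nabla f(x_k)+G_k\lambda^{(k)}$ and rewrite the update as $x_{k+1}-x_k=-\alpha_k M_kM_k^\top v_k$. Using the definition of $\lambda^{(k)}$ in \eqref{def_lambda}, the $d\times d$ orthogonal projector $Z_k$ in \eqref{def:Z_k} satisfies $M_kM_k^\top v_k = M_kZ_kM_k^\top\nabla f(x_k)$, so that
\[
g_i(x_{k+1})=g_i(x_k)-\alpha_k\,(M_k^\top\nabla g_i)^\top Z_k(M_k^\top\nabla f(x_k)).
\]

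For $i\in\mathcal{A}_k$, the vector $\nabla g_i$ is a column of $G_k$, and the defining equation of $\lambda^{(k)}$ gives $G_k^\top M_kM_k^\top v_k=0$; hence $\nabla g_i^\top(x_{k+1}-x_k)=0$ and $g_i(x_{k+1})=g_i(x_k)\le 0$, i.e., every active constraint stays satisfied (in fact unchanged) deterministically. This is the easy half.

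For $i\notin\mathcal{A}_k$ the definition of the active set \eqref{eq:active_set} says $-g_i(x_k)>\varepsilon_0\|\nabla g_i\|$, so it suffices to show
\[
\alpha_k\bigl|(M_k^\top\nabla g_i)^\top Z_k(M_k^\top\nabla f(x_k))\bigr|\le\varepsilon_0\|\nabla g_i\|.
\]
Here I would apply Cauchy--Schwarz together with the fact that $Z_k$ is an orthogonal projection ($\|Z_k\|_{\mathrm{op}}\le 1$) to bound the left-hand side by $\alpha_k\|M_k^\top\nabla g_i\|\cdot\|M_k^\top\nabla f(x_k)\|$. Then Lemma~\ref{lemma:Johnson}, applied with parameter $\varepsilon_1$ to the $d\times n$ Gaussian $P_k=nM_k^\top$, gives $\|M_k^\top\nabla g_i\|^2\le\tfrac{d(1+\varepsilon_1)}{n^2}\|\nabla g_i\|^2$ and similarly $\|M_k^\top\nabla f(x_k)\|^2\le\tfrac{d(1+\varepsilon_1)}{n^2}U_f^2$, each with failure probability at most $2\exp(-C_0\varepsilon_1^2 d)$. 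Combining and using the stated range of $\alpha_k\le\varepsilon_0 n^2/((1+\varepsilon_1)dU_f)$ yields $g_i(x_{k+1})<0$.

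Finally, a union bound over the $m-|\mathcal{A}_k|$ inactive indices (one JL event for each $\nabla g_i$) plus the single JL event for $\nabla f(x_k)$ accounts for all $m-|\mathcal{A}_k|+1$ vectors appearing, producing the probability $1-2(m-|\mathcal{A}_k|+1)\exp(-C_0\varepsilon_1^2 d)$. The only subtle point, and hence the item I would check carefully, is that each vector to which Lemma~\ref{lemma:Johnson} is applied must be independent of the random matrix $M_k$: this is legitimate because the $\nabla g_i$ are constant (linearity of $g_i$) and $\nabla f(x_k)$ is determined by $x_k$, which is measurable with respect to $M_0,\dots,M_{k-1}$ and hence independent of the freshly drawn $M_k$. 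No such independence is available for $v_k$ itself (which depends on $\lambda^{(k)}$ and therefore on $M_k$), which is exactly why the argument is routed through the factorization $M_kZ_kM_k^\top\nabla f(x_k)$ rather than applying JL directly to $v_k$.
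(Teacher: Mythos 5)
Your proposal is correct and follows essentially the same route as the paper's proof: exactness of the linear expansion, the identity $G_k^\top M_k d_k=0$ for the active constraints, Cauchy--Schwarz with $\|Z_k\|_{\mathrm{op}}\le 1$ for the inactive ones, Lemma~\ref{lemma:Johnson} applied to each $\nabla g_i$ ($i\notin\mathcal{A}_k$) and to $\nabla f(x_k)$, and a union bound over those $m-|\mathcal{A}_k|+1$ vectors. Your closing remark on why the Johnson--Lindenstrauss bound must be routed through $M_kZ_kM_k^\top\nabla f(x_k)$ rather than applied to $v_k$ directly (independence of the projected vectors from the freshly drawn $M_k$) is a point the paper leaves implicit, but it does not change the argument.
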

\begin{proof}
Note that  $\lambda^{(k)}$ is the solution of 
$G_k^\top M_kM_k^\top G_k \lambda^{(k)} = -G_k^\top M_kM_k^\top \fgrad$,
which is equal to $G_k^\top M_k d_k =0$ when  $d_k$ is defined as \eqref{def_dk_1}.
Since the constraints are linear, the direction $M_kd_k$ preserves feasibility for the active constraints: 
\begin{equation*}
g_i(x_k +\alpha_k M_kd_k) = g_i(x_k) +\alpha_k \nabla g_i(x_k)^\top  M_k d_k = g_i(x_k), \;\;
\forall i \in \mathcal{A}_k.
\end{equation*}
  As for $i\notin \mathcal{A}_k$, notice that $x_{k+1}$ is feasible if 
	\begin{align*}
 g_i(x_{k+1}) &= g_i(x_k) - \alpha_k\nabla g_i(x_k)^\top  M_k Z_k M_k^\top \fgrad \\
		&\le g_i(x_k) + \alpha_k\|M_k^\top \nabla g_i(x_k)\| \|Z_k M_k^\top \fgrad\|
		\le 0
	\end{align*}
	is satisfied.
	By Lemma~\ref{lemma:Johnson}, we have
	\[
	\|M_k^\top \nabla g_i(x_k)\| \le \sqrt{\frac{d(1+\varepsilon)}{n^2} }\|\nabla g_i(x_k)\| \qquad(i\notin \mathcal{A}_k),
	\]
	with probability at least $1 - 2(m-\vert\mathcal{A}_k\vert) \exp{(-C_0\varepsilon^2d)}$.
	Since $Z_k$ is an orthogonal projection, we further have
	\[
	\|Z_kM_k^\top \nabla f(x_k)\| \le\|M_k^\top \nabla f(x_k)\|\le \sqrt{\frac{d(1+\varepsilon)}{n^2}} \|\nabla f(x_k)\| 
	\]
	with probability at least $1-2\exp{(-C_0\varepsilon^2d)}$. From Assumption~\ref{Assumption:Objective}\ref{Level Set is compact or feasible set is compact}, we have
	\begin{flalign} 
             \notag     g_i(x_k) + \alpha_k\|M_k^\top \nabla g_i(x_k)\| \|Z_k M_k^\top \fgrad\| 
		&\notag \le g_i(x_k) + \alpha_k\frac{d(1+\varepsilon)}{n^2}\|\nabla g_i(x_k)\|\|\fgrad\| & \\
		&\le g_i(x_k) + \alpha_k \frac{d(1+\varepsilon)}{n^2}\|\nabla g_i(x_k)\| U_f.
\label{eq:feas_check}
	\end{flalign}
    Because of the assumption on $\alpha_k$ and $\varepsilon_0 < \frac{-g_i(x_k)}{\|\nabla g_i(x_k)\|}$ for $i\notin \mathcal{A}_k$, we have
	\[
	0\le \alpha_k \le \frac{\varepsilon_0}{U_f}\frac{n^2}{(1+\varepsilon)d}
< \frac{1}{U_f}\frac{n^2}{(1+\varepsilon)d} \frac{-g_i(x_k)}{\|\nabla g_i(x_k)\|},
	\]
which ensures that the right-hand side of \eqref{eq:feas_check} is upper-bounded by $0$. Hence, $x_{k+1}$ is feasible.
\end{proof}
\begin{remark}
	If we do not use random matrices, the condition becomes $\alpha_k \le \varepsilon_0/U_f$ and the original dimension $n$ does not appear. Also, since $\|M_k^\top (\nabla f(x_k)+ G_k \lambda^{(k)})\|\approx \frac{\sqrt{d}}{n}\|\nabla f(x_k)+ G_k \lambda^{(k)}\|$, we see that, using a random subspace allows us to use a larger step size $\frac{n}{\sqrt{d}}$.
	Notice, that when the dimension $n$ is large enough,
	the random matrices allow us to ignore the step-size condition because the step size is at least proportional to $\sqrt{n} ( < \frac{n}{\sqrt{d}} )$, which comes from $d < n$.
\end{remark}

\begin{proposition}{\label{proposition:Feasibility with linear inequality2}}
	Suppose that Assumptions~\ref{Assumption:Objective} and \ref{Assumption:MatRank} hold and that all constraints $g_i$ are linear
  and $x_k$ is feasible.
	Then, if the step size $\alpha_k$ satisfies
	\begin{equation}{\label{eq:lower bound of step size with 2nd direction}}
		0\le \alpha_k \le \frac{n}{d}\frac{(1-\varepsilon)\lambda_{\mathrm{min}}^*}{(1+\varepsilon)U_f}\varepsilon_0 
	\end{equation}
	when $d_k = - \frac{d}{n}M_k^\top G_k(G_k^\top M_k M_k^\top G_k)^{-1} [-\lambda^{(k)}]_+$
of \eqref{def_dk_2} in Algorithm~\ref{Secondary Algorithm}, 
$x_{k+1}$ is feasible with probability at least $1-2(m-\vert\mathcal{A}_k\vert+2)\exp{(-C_0\varepsilon^2d)}$.
\end{proposition}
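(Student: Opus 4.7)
The plan is to follow the template of Proposition~\ref{proposition:Feasibility with linear inequality}, but with the second direction $d_k$ from \eqref{def_dk_2}. Linearity of the $g_i$ gives $g_i(x_{k+1}) = g_i(x_k) + \alpha_k \nabla g_i(x_k)^\top M_k d_k$, so the whole proof reduces to bounding $\nabla g_i(x_k)^\top M_k d_k$ index-by-index. For active indices $i\in\mathcal{A}_k$ nothing probabilistic is needed: substituting \eqref{def_dk_2} and cancelling $G_k^\top M_kM_k^\top G_k$ against its inverse yields $G_k^\top M_k d_k = -\tfrac{d}{n}[-\lambda^{(k)}]_+$, hence $\nabla g_i(x_k)^\top M_k d_k = -\tfrac{d}{n}([-\lambda^{(k)}]_+)_i \le 0$, and feasibility of $x_k$ immediately propagates to $x_{k+1}$ on those coordinates.

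For an inactive index $i\notin\mathcal{A}_k$ I would estimate
\[
 |\nabla g_i(x_k)^\top M_k d_k|
 \le \tfrac{d}{n}\,\|M_k^\top \nabla g_i(x_k)\|\cdot\bigl\|M_k^\top G_k(G_k^\top M_kM_k^\top G_k)^{-1}[-\lambda^{(k)}]_+\bigr\|
\]
by Cauchy--Schwarz. Lemma~\ref{lemma:Johnson} controls the first factor by $\sqrt{d(1+\varepsilon)/n^2}\,\|\nabla g_i(x_k)\|$ (one event per inactive $i$), while the second factor is bounded by $\|[-\lambda^{(k)}]_+\|/\sqrt{\lambda_{\min}(G_k^\top M_kM_k^\top G_k)}$, to which we apply the estimate \eqref{eq:eig_linear} together with $\lambda_{\min}(G_k^\top G_k)\ge\lambda_{\min}^*$ (one more event). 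Together these yield
\[
 |\nabla g_i(x_k)^\top M_k d_k|
 \le \tfrac{d}{n}\sqrt{\tfrac{1+\varepsilon}{(1-\varepsilon)\lambda_{\min}^*}}\,\|\nabla g_i(x_k)\|\cdot\|[-\lambda^{(k)}]_+\|.
\]

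The remaining and main obstacle is to turn $\|[-\lambda^{(k)}]_+\|$ into something proportional to $U_f$. The idea is that, by \eqref{def_lambda}, the vector $-M_k^\top G_k\lambda^{(k)}$ is the orthogonal projection of $M_k^\top \fgrad$ onto the column space of $M_k^\top G_k$, so $\|M_k^\top G_k\lambda^{(k)}\|\le\|M_k^\top \fgrad\|\le\sqrt{d(1+\varepsilon)/n^2}\,U_f$ with Johnson--Lindenstrauss probability (one further event). Combining this with $\|M_k^\top G_k\lambda^{(k)}\|^2\ge\lambda_{\min}(G_k^\top M_kM_k^\top G_k)\|\lambda^{(k)}\|^2$ and reusing \eqref{eq:eig_linear} gives $\|[-\lambda^{(k)}]_+\|\le\|\lambda^{(k)}\|\le U_f\sqrt{(1+\varepsilon)/((1-\varepsilon)\lambda_{\min}^*)}$. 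Substituting this back produces $|\nabla g_i(x_k)^\top M_k d_k|\le\tfrac{d}{n}\cdot\tfrac{(1+\varepsilon)U_f}{(1-\varepsilon)\lambda_{\min}^*}\,\|\nabla g_i(x_k)\|$. Using the definition of $\mathcal{A}_k$ to get $-g_i(x_k)/\|\nabla g_i(x_k)\|>\varepsilon_0$ for $i\notin\mathcal{A}_k$, the hypothesized step-size bound \eqref{eq:lower bound of step size with 2nd direction} is exactly what is needed to force $g_i(x_k)+\alpha_k|\nabla g_i(x_k)^\top M_k d_k|\le 0$. A union bound over the $m-|\mathcal{A}_k|$ projection events together with the two spectral/gradient events yields the claimed failure probability of at most $2(m-|\mathcal{A}_k|+2)\exp(-C_0\varepsilon^2 d)$.
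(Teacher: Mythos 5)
Your proposal is correct and follows essentially the same route as the paper's proof: preserving the active constraints via $G_k^\top M_k d_k = -\tfrac{d}{n}[-\lambda^{(k)}]_+\le 0$, bounding the inactive ones by $\|M_k^\top\nabla g_i(x_k)\|\|d_k\|$ with Johnson--Lindenstrauss and the eigenvalue estimate \eqref{eq:eig_linear}, and controlling $\|[-\lambda^{(k)}]_+\|$ through $\|\lambda^{(k)}\|\le \|M_k^\top\nabla f(x_k)\|/\sqrt{\lambda_{\min}(G_k^\top M_kM_k^\top G_k)}$ exactly as in \eqref{eq:upper bound of lambda}. Your derivation of that last bound via the orthogonal-projection property of $-M_k^\top G_k\lambda^{(k)}$ is an equivalent reformulation of the paper's operator-norm computation, and the event count matches the stated probability.
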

\begin{proof}
	Regarding the active constraints, 
	\begin{align*}
		G_k^\top M_kd_k 
		=& - \frac{d}{n}G_k^\top M_kM_k^\top G_k(G_k^\top M_k M_k^\top G_k)^{-1} [-\lambda^{(k)}]_+\\
		=& -\frac{d}{n}[-\lambda^{(k)}]_+ \le 0
	\end{align*}
	
	holds. Therefore,
	\[
		g_i(x_k +\alpha_k M_kd_k) = g_i(x_k) +\alpha_k \nabla g_i(x_k)^\top M_k d_k \leq  g_i(x_k), \;\;
		\forall i \in \mathcal{A}_k.
	\]
	Hence, $g_i(x_{k+1})\le 0$ is satisfied for the active constraints. 

As for the nonactive constraints ($i\notin \mathcal{A}_k$), it is enough to prove the inequality,
\begin{equation}
	 g_i(x_k) + \alpha_k \|M_k^\top\nabla g_i (x_k)\|\|d_k\|\le 0,
\label{eq:feas_alpha_dir2}
\end{equation}
which leads to
	\[
	g_i(x_{k+1}) = g_i(x_k) + \alpha_k\nabla g_i^\top (x_k) M_kd_k 
	\le g_i(x_k) + \alpha_k \|M_k^\top\nabla g_i (x_k)\|\|d_k\|\le 0.
	\]
	From Lemma~\ref{lemma:Johnson}, we find that 
	\begin{alignat}{2}
		& \notag g_i(x_k) + \alpha_k \|M_k^\top\nabla g_i (x_k)\|\|d_k\| &\\
		&  \notag \le g_i(x_k) + \alpha_k \sqrt{\frac{d(1+\varepsilon)}{n^2}}\|\nabla g_i (x_k)\|\|d_k\| \\
		&  \notag \le g_i(x_k) + \alpha_k \frac{d}{n}\sqrt{\frac{(1+\varepsilon)}{(1-\varepsilon)\lambda_{\mathrm{min}}(G_k^\top G_k)}} \|\nabla g_i(x_k)\|\|[-\lambda^{(k)}]_+\| \\
		&\le g_i(x_k) + \alpha_k \frac{d}{n}\sqrt{\frac{(1+\varepsilon)}{(1-\varepsilon)\lambda_{\mathrm{min}}^*}} \|\nabla g_i(x_k)\|\|[-\lambda^{(k)}]_+\|
		\qquad(i\notin \mathcal{A}_k) 
\label{eq:nonact_const}
	\end{alignat}
	holds with probability at least $1-2(m-\vert\mathcal{A}_k\vert+1)\exp{(-C_0\varepsilon_1^2d)}$. 
The second inequality follows from (\ref{eq:norm of second direction}).
Now, we will show that the step size satisfies
	\begin{alignat}{2}
		0\le
	\alpha_k & \notag \le \frac{n}{d}\frac{(1-\varepsilon)\lambda_{\mathrm{min}}^*}{(1+\varepsilon)U_f}\varepsilon_0 \\
 & \le \frac{n}{d\|[-\lambda^{(k)}]_+\|} \sqrt{\frac{(1-\varepsilon)\lambda_{\mathrm{min}}^*}{(1+\varepsilon)}}\varepsilon_0,
\label{eq:rel_alpha}
	\end{alignat} 
which proves \eqref{eq:feas_alpha_dir2} from $\varepsilon_0 \le \frac{-g_i(x_k)}{\|\nabla g_i (x_k)\|}$ together with \eqref{eq:nonact_const}.
Let us evaluate the upper bound on $\|\lambda^{(k)}\|$. From \eqref{def_lambda},  
we have
	\begin{align*}		
		\|\lambda^{(k)}\|& = \|(G_k^\top M_k M_k^\top G_k)^{-1}G_k^\top M_k M_k^\top \fgrad \|\\
		&\le  \|(G_k^\top M_k M_k^\top G_k)^{-1}G_k^\top M_k\|\| M_k^\top \fgrad \|,               
	\end{align*}
where the first norm  $\|\cdot\|$ on the right-hand side
         is the operator norm for a matrix. We obtain  
	\[
	\|(G_k^\top M_k M_k^\top G_k)^{-1}G_k^\top M_k\|^2
	= \lambda_{\mathrm{max}}((G_k^\top M_k M_k^\top G_k)^{-1})
	= \frac{1}{\lambda_{\mathrm{min}}(G_k^\top M_k M_k^\top G_k)}.
	\]
	Then, from (\ref{eq:eig_linear}) and Lemma~\ref{lemma:Johnson}, we find that 
	\begin{alignat}{2}{\label{eq:upper bound of lambda}}
		\notag
\|\lambda^{(k)}\|
		&\le  \frac{\|M_k^\top \fgrad\|}{\sqrt{\lambda_{\mathrm{min}}(G_k^\top M_k M_k^\top G_k)}} &\\
		\notag 
		&\le \sqrt{\frac{n^2}{d(1-\varepsilon)}}\frac{\|M_k^\top \fgrad\|}{\sqrt{\lambda_{\mathrm{min}}(G_k^\top G_k)}} \\
		\notag
		&\le \sqrt{\frac{(1+\varepsilon)}{(1-\varepsilon)}} \frac{\|\fgrad\|}{\sqrt{\lambda_{\mathrm{min}}(G_k^\top G_k)}} \\ 
		&\le\sqrt{\frac{(1+\varepsilon)}{(1-\varepsilon)}} \frac{U_f}{\sqrt{\lambda_{\mathrm{min}}(G_k^\top G_k)}}
		\le \sqrt{\frac{(1+\varepsilon)}{(1-\varepsilon)}} \frac{U_f}{\sqrt{\lambda_{\mathrm{min}}^*}} &
	\end{alignat}
	holds with probability at least $1-4\exp{(-C_0\varepsilon^2d)}$. 
	The second inequality follows from (\ref{eq:eig_linear}) and the third inequality follows from Lemma~\ref{lemma:Johnson}.
Hence, 	using  \eqref{eq:upper bound of lambda} with $\|[-\lambda^{(k)}]_+\| \leq \|\lambda^{(k)}\|$,
the second inequality in \eqref{eq:rel_alpha} holds and it is confirmed that $x_{k+1}$ satisfies the nonactive inequality constraints.
Thus, $x_{k+1}$ is feasible 
with probability at least $1-2(m-\vert\mathcal{A}_k\vert+2)\exp{(-C_0\varepsilon^2d)}$;
the probability can be derived by applying Lemma~\ref{lemma:Johnson} to $\|M_k^\top \fgrad\|, \|M_k^\top \nabla g_i(x_k)\|\;(i \notin \mathcal{A}_k)$ and (\ref{eq:eig_linear}).
\end{proof}

	Similarly to Proposition~\ref{proposition:Feasibility with linear inequality}, we can ignore this condition when the original dimension $n$ is large enough.
\subsection{Global convergence}\label{sec:global_conv}
\begin{definition}
	We say that $(x,\eta) \in \mathbb{R}^n \times \mathbb{R}^m$ is an $(\varepsilon_1,\varepsilon_2,\varepsilon_3)$-KKT pair of
 Problem (\ref{main problem}) if the following conditions hold:
	\begin{eqnarray}
		\label{definite:epsilon kkt 1} \|\nabla f(x) + \sum_{i = 1}^{m}\eta_i \nabla g_i(x)\|\le \varepsilon_1,\\
		\label{definite:feasible}g_i(x)\le 0,\\
		\label{definite:epsilon kkt 2}\eta_i \ge -\varepsilon_2,\\
		\label{definite:epsilon kkt 3} \vert \eta_i g_i(x) \vert \le \varepsilon_3.
	\end{eqnarray}
\end{definition}
We can construct $\eta^{(k)}  \in \mathbb{R}^m$ from the output $\lambda^{(k)}  \in \mathbb{R}^{\vert \mathcal{A}_k \vert}$ 
of Algorithm~\ref{Secondary Algorithm} as follows:
 copy the values of $\lambda^{(k)}$  to the subvector of $\eta^{(k)}$ corresponding to the index set $\mathcal{A}_k$, filling in
the other elements of $\eta^{(k)}$ with $0$. We will regard $(x_k, \eta^{(k)})$ as the output of  Algorithm~\ref{Secondary Algorithm}. 
Now, let us prove that the output $(x_k,\eta^{(k)})$ is an $(\varepsilon_1,\varepsilon_2,\varepsilon_3)$-KKT pair for some 
$\varepsilon_1,\varepsilon_2,\varepsilon_3$.

\begin{theorem}{\label{theorem:global convergence}}
Suppose that Assumptions~\ref{Assumption:Objective} and \ref{Assumption:MatRank} hold.
	Let the constraints $g_i$ be linear functions and $U_g = \max_i \|\nabla g_i(x)\|$. Moreover, let the optimal value of (\ref{main problem}) be $f^*(> -\infty)$, and
	let 
	\begin{eqnarray*}
		&\delta(\varepsilon,\varepsilon_0,\varepsilon_1,\varepsilon_2) & := \min\left(\min\left(\frac{n}{L(1+\varepsilon)}, \frac{\varepsilon_0}{U_f}\frac{n^2}{(1+\varepsilon)d}\right)\frac{d(1-\varepsilon)}{2n^2}\varepsilon_1^2,\right.\\
		&&  ~~~~~~~~~~~~~~~~~~~~ \left.\min\left(\frac{(1-\varepsilon)\lambda_{\mathrm{min}}^*}{(1+\varepsilon)L}, \frac{n}{d}\frac{(1-\varepsilon)\lambda_{\mathrm{min}}^*}{(1+\varepsilon)U_f}\varepsilon_0\right)\frac{d}{2n}\varepsilon_2^2\right).
	\end{eqnarray*}
Then  Algorithm~\ref{Secondary Algorithm} generates an $(\varepsilon_1,\varepsilon_2,O(\varepsilon_0))$-KKT pair from the inputs $d,\varepsilon_0,\varepsilon_2$, $\delta_1 = \sqrt{\frac{d(1-\varepsilon)}{n^2}}\varepsilon_1$ 
within $K := \left\lceil \frac{f(x_0)-f^*}{\delta(\varepsilon,\varepsilon_0,\varepsilon_1,\varepsilon_2)}\right\rceil $
 iterations, with
	probability at least $1 - 2K\exp{(-C_0\varepsilon^2n)}-2K(m+5)\exp{(-C_0\varepsilon^2d)}$.
\end{theorem}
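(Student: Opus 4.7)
The plan is to maintain, by induction on the iteration index $k$, two invariants at each non-terminating step: (a) $x_k$ is feasible, and (b) $f(x_k)-f(x_{k+1})\ge \delta(\varepsilon,\varepsilon_0,\varepsilon_1,\varepsilon_2)$ with the per-iteration probability claimed in the two descent propositions and the two feasibility propositions. Telescoping (b) against the lower bound $f(x_k)\ge f^*$ forces the algorithm to terminate within $K=\lceil (f(x_0)-f^*)/\delta\rceil$ iterations, after which a direct check of the termination test verifies the four $(\varepsilon_1,\varepsilon_2,O(\varepsilon_0))$-KKT conditions, and a union bound over $K$ iterations gives the stated overall probability.

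For the descent invariant (b), I would split into the two branches of the algorithm. When $d_k$ from \eqref{def_dk_1} satisfies $\|d_k\|>\delta_1$, I combine Proposition~\ref{Proposition:dec_obj} with Proposition~\ref{proposition:Feasibility with linear inequality} and pick $\alpha_k = \min\bigl(n/(L(1+\varepsilon)),\,\varepsilon_0 n^2/(U_f(1+\varepsilon)d)\bigr)$, i.e.\ the smaller of the Lipschitz-optimal step and the feasibility threshold. This yields $f(x_k)-f(x_{k+1})\ge \tfrac{1}{2}\alpha_k\|d_k\|^2 \ge \tfrac{1}{2}\alpha_k\delta_1^2$, and substituting $\delta_1^2=\tfrac{d(1-\varepsilon)}{n^2}\varepsilon_1^2$ recovers the first term of $\delta$. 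In the other branch (where $\|d_k\|\le \delta_1$ but $\min_i\lambda^{(k)}_i<-\varepsilon_2$), the algorithm switches to \eqref{def_dk_2}; since $|\min_i\lambda^{(k)}_i|\le \|[-\lambda^{(k)}]_+\|$, we have $\|[-\lambda^{(k)}]_+\|\ge \varepsilon_2$, and a parallel combination of Propositions~\ref{Proposition:obj_dec_2} and~\ref{proposition:Feasibility with linear inequality2} with $\alpha_k = \min\bigl(\tfrac{(1-\varepsilon)\lambda^*_{\min}}{(1+\varepsilon)L},\,\tfrac{n}{d}\tfrac{(1-\varepsilon)\lambda^*_{\min}}{(1+\varepsilon)U_f}\varepsilon_0\bigr)$ produces the second term of $\delta$. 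The backtracking while-loop only shrinks $\alpha_k$ by factors of $\beta$, so assuming $h$ is chosen at least as large as the relevant feasibility thresholds, the returned step satisfies these bounds (any $\beta$-factor is absorbed into the constants).

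At termination, I would verify each of \eqref{definite:epsilon kkt 1}--\eqref{definite:epsilon kkt 3} for the multiplier $\eta^{(k)}$ obtained by zero-padding $\lambda^{(k)}$. Writing $v := \nabla f(x_k)+G_k\lambda^{(k)}$ so that $d_k=-M_k^\top v$, Lemma~\ref{lemma:Johnson} applied to $v$ via $M_k=\tfrac{1}{n}P_k^\top$ yields $\|v\|^2\le \tfrac{n^2}{d(1-\varepsilon)}\|d_k\|^2\le \varepsilon_1^2$ with high probability, giving the stationarity bound. Feasibility holds by invariant (a), the second KKT condition is exactly the termination test, and for complementary slackness inactive indices are trivial ($\eta^{(k)}_i=0$), while active indices satisfy $|g_i(x_k)|\le \varepsilon_0\|\nabla g_i(x_k)\|\le \varepsilon_0 U_g$ by \eqref{eq:active_set} and $|\lambda^{(k)}_i|$ is bounded via \eqref{eq:upper bound of lambda}, giving $|\eta^{(k)}_i g_i(x_k)|=O(\varepsilon_0)$. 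The main obstacle I foresee is the careful bookkeeping of the step size returned by the backtracking loop and matching it to the step size used in the Lipschitz-based descent estimate; once $h$ is assumed large enough, this becomes routine but needs explicit handling of the factor $\beta$, as does the precise counting of Lemma~\ref{lemma:Johnson} invocations per iteration to obtain the $2K(m+5)\exp(-C_0\varepsilon^2 d)$ term in the union bound.
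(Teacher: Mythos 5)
Your proposal is correct and follows essentially the same route as the paper's own proof: the same case split on the two search directions, the same pairing of Propositions~\ref{Proposition:dec_obj}/\ref{proposition:Feasibility with linear inequality} and \ref{Proposition:obj_dec_2}/\ref{proposition:Feasibility with linear inequality2} with the same step-size choices, the same telescoping argument to bound $\bar{k}\le K$, and the same termination-time verification of the KKT conditions via Lemma~\ref{lemma:Johnson} and the bound \eqref{eq:upper bound of lambda} on $\|\lambda^{(k)}\|$ (using the identity $\|d_k\|=\|Z_kM_k^\top\nabla f(x_k)\|$ exactly as the paper does). The only items left implicit --- the explicit count of Johnson--Lindenstrauss invocations yielding the $2K(m+5)$ factor and the $\beta$-factor bookkeeping in the backtracking loop --- are handled at essentially the same level of detail in the paper itself.
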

\begin{proof}
	The points $\{x_{k}\}$ are feasible when the step size $\alpha_k$ satisfies the conditions of Propositions~\ref{proposition:Feasibility with linear inequality} and \ref{proposition:Feasibility with linear inequality2}. Hence, (\ref{definite:feasible}) is satisfied. Next, we prove (\ref{definite:epsilon kkt 3}).
	In terms of $i\in \mathcal{A}_k$, $\frac{[-g_i(x)]_+}{\|\nabla g_i(x)\|} \le \varepsilon_0$ and Lemma~\ref{lemma:Johnson} imply that
	\begin{alignat*}{2}
		\sum_{i\in \mathcal{A}_k} (\eta_i^{(k)})^2([-g_i(x_k)]_+)^2 &\le \sum_{i\in \mathcal{A}_k} \varepsilon_0^2 (\eta_i^{(k)})^2\|\nabla g_i(x_k)\|^2 \\
		&\le \varepsilon_0^2\max_{j} \|\nabla g_j(x_k)\|^2\sum_{i\in \mathcal{A}_k} (\eta_i^{(k)})^2 &\\
		&=\varepsilon_0^2\max_{j} \|\nabla g_j(x_k)\|^2 \|\lambda^{(k)}\|^2 &\\
		&\le \varepsilon_0^2\max_{j} \frac{\|\nabla g_j(x_k)\|^2}{\lambda_{\mathrm{min}}(G_k^\top G_k)}\lambda^{(k)\top} G_k^\top G_k \lambda^{(k) } &\\
		&\le \varepsilon_0^2 \frac{n^2}{d(1-\varepsilon)} \max_{j} \frac{\|\nabla g_j(x_k)\|^2}{\lambda_{\mathrm{min}}(G_k^\top G_k)} \lambda^{(k)\top} G_k^\top M_kM_k^\top G_k \lambda^{(k)}.&
	\end{alignat*}
	The first inequality follows from $[- g_i(x_k)]_+ \le \varepsilon_0 \|\nabla g_i(x_k)\|$ and the last inequality follows from Lemma~\ref{lemma:Johnson}.
	From \eqref{def_lambda}  
we have
	\begin{align*}
			\lambda^{(k)\top} G_k^\top M_kM_k^\top G_k \lambda^{(k)}
		&= \fgrad^\top M_k M_k^\top G_k(G_k^\top M_kM_k^\top G_k)^{-1}G_k^\top M_k M_k^\top \fgrad \\
		&\le \|M_k^\top \fgrad\|^2,
	\end{align*}
	where we have used the fact that $ M_k^\top G_k(G_k^\top M_kM_k^\top G_k)^{-1}G_k^\top M_k$ is an orthogonal projection matrix and hence its maximum eigenvalue is equal to $1$.
	From these inequalities, we obtain  
	\begin{alignat}{2}{\label{eq:eval_cond3}}
		\notag
		\sum_{i\in \mathcal{A}_k} (\eta_i^{(k)})^2([-g_i(x_k)]_+)^2 & \le \varepsilon_0^2 \frac{n^2}{d(1-\varepsilon)} \max_{j} \frac{\|\nabla g_j(x_k)\|^2}{\lambda_{\mathrm{min}}(G_k^\top G_k)} \|M_k^\top \fgrad\|^2\\
		\notag
		&\le \varepsilon_0^2 \frac{(1 + \varepsilon)}{(1-\varepsilon)} \max_{j} \frac{\|\nabla g_j(x_k)\|^2\|\fgrad\|^2}{\lambda_{\mathrm{min}}(G_k^\top G_k)} \\
		& \le \varepsilon_0^2 \frac{(1+\varepsilon)}{(1-\varepsilon)}\frac{U_g^2 U_f^2}{\lambda_{\mathrm{min}}^*}.&
	\end{alignat}
	The second inequality follows from Lemma~\ref{lemma:Johnson}.
	Hence, $\vert \eta_i^{(k)}g_i(x_k) \vert=\vert \eta_i^{(k)}[-g_i(x_k)]_+ \vert = O(\varepsilon_0)$, satisfying \eqref{definite:epsilon kkt 3}.
	Next, we derive \eqref{definite:epsilon kkt 1}. 
	When Algorithm~\ref{Secondary Algorithm} terminates at iteration $\bar{k}$, 
	 $\|M_{\bar{k}}^\top \nabla f(x_{\bar{k}}) + M_{\bar{k}}^\top G_{\bar{k}} \lambda^{(\bar{k})}\|\le \delta_1$ holds, implying from Lemma~\ref{lemma:Johnson} that 
	\[
	\sqrt{\frac{d(1-\varepsilon)}{n^2}}\| \nabla f(x_{\bar{k}}) + G_{\bar{k}} \lambda^{({\bar{k}})}\| \le \delta_1 = \sqrt{\frac{d(1-\varepsilon)}{n^2}}\varepsilon_1
	\]
	with probability at least $1-2\exp{(-C_0\varepsilon^2d)}$. By definition of $\eta^{({\bar{k}})}$, we have 
	$$\| \nabla f(x_{\bar{k}}) + G_{\bar{k}} \lambda^{({\bar{k}})}\|=\|\nabla f(x) + \sum_{i = 1}^m\eta_i \nabla g_i(x)\|.$$ Furthermore, because $\min_i \lambda_i^{(\bar{k})} > -\varepsilon_2$ also holds,  $(x_{\bar{k}},\eta^{({\bar{k}})})$ is an $(\varepsilon_1,\varepsilon_2,O(\varepsilon_0))$-KKT pair.

	Now we prove that the iteration number $\bar{k}$ for finding an $(\varepsilon_1,\varepsilon_2,O(\varepsilon_0))$-KKT pair is at most $K$, i.e., $\bar{k} \leq K$.
	We will show that the function value strictly and monotonically decreases in the two directions (denoted as Case 1 and Case 2, here)  at each iteration $k (\leq \bar{k}-1)$ of Algorithm~\ref{Secondary Algorithm}.
	\begin{description}
\item{Case 1: }
	When $\|Z_k M_k^\top \fgrad\|=\|M_k^\top \fgrad + M_k^\top G_k \lambda^{(k)}\|\ge\delta_1$ and $0 \le \alpha_k \le \frac{n}{L(1+\varepsilon)}$,
	we have 
	\[
	\alpha_k - \frac{L\alpha_k^2(1+\varepsilon)}{2n} \ge \frac{1}{2}\alpha_k.
	\]
	These relations and Proposition~\ref{Proposition:dec_obj} yield the following:
	\begin{equation*}{\label{eq:global convergence update1}}
		f(x_{k+1})\le f(x_k) -\frac{1}{2}\alpha_k\frac{d(1-\varepsilon)}{n^2}\varepsilon_1^2.
	\end{equation*}
	If the step size $\alpha_k$ satisfies the condition of Proposition~\ref{proposition:Feasibility with linear inequality},
	\begin{equation}{\label{eq:global convergence, 1}}
		f(x_{k+1}) - f(x_k) \le -\frac{1}{2}\min\left(\frac{n}{L(1+\varepsilon)}, \frac{\varepsilon_0}{U_f}\frac{n^2}{(1+\varepsilon)d}\right)\frac{d(1-\varepsilon)}{n^2}\varepsilon_1^2
	\end{equation}
	holds and $x_{k+1}$ is feasible.
\item{Case 2: }
	Next, when $\|M_k^\top \fgrad + M_k^\top G_k \lambda^{(k)}\|\le\delta_1$, we update the point by 
	\[
	d_k = -\frac{d}{n}M_k^\top G_k(G_k^\top M_k M_k^\top G_k)^{-1} [-\lambda^{(k)}]_+.
	\] 
	Since Algorithm~\ref{Secondary Algorithm} does not terminate at iteration $k$, we have $\min_i \lambda_i^{(k)}<-\varepsilon_2$, and this inequality leads to 
	$\|[-\lambda^{(k)}]_+\|^2\ge \varepsilon_2^2$. 
	When then step size $\alpha_k$ satisfies $0\le \alpha_k \le \frac{(1-\varepsilon)\lambda_{\mathrm{min}}(G_k^\top G_k)}{(1+\varepsilon)L}$, from Proposition~\ref{Proposition:obj_dec_2} and 
	\[
	\alpha_k - \frac{L\alpha_k^2(1+\varepsilon)}{2(1-\varepsilon)\lambda_{\mathrm{min}}(G_k^\top G_k)} \ge \frac12\alpha_k,
	\] 
	we have 
	\begin{equation*}{\label{eq:global convergence update2}}
		f(x_{k+1})- f(x_k) \le - \alpha_k\frac{d}{2n}\varepsilon_2^2.
	\end{equation*}
	If the step size $\alpha_k$ satisfies \eqref{eq:lower bound of step size with 2nd direction} in Proposition~\ref{proposition:Feasibility with linear inequality2}, 
	\begin{equation}{\label{eq:global convergence, 2}}
		f(x_{k+1})- f(x_k) \le - \min\left(\frac{(1-\varepsilon)\lambda_{\mathrm{min}}^*}{(1+\varepsilon)L}, \frac{n}{d}\frac{(1-\varepsilon)\lambda_{\mathrm{min}}^*}{(1+\varepsilon)U_f}\varepsilon_0\right)\frac{d}{2n}\varepsilon_2^2
	\end{equation}
	holds and $x_{k+1}$ is feasible.
\end{description}
	Combining (\ref{eq:global convergence, 1}) and (\ref{eq:global convergence, 2}) and then summing over $k$ and using the relation $f^* \leq f(x_{\bar{k}})$, we find that
	\begin{equation}
	f^* - f(x_{0}) \le f(x_{\bar{k}}) - f(x_{0}) \le -\bar{k} \delta(\varepsilon,\varepsilon_0,\varepsilon_1,\varepsilon_2)
        \label{eq:contradict_ineq}
	\end{equation}
	holds with probability as least $1 - 2\bar{k}\exp{(-C_0\varepsilon^2n)}-2\bar{k}(m+5)\exp{(-C_0\varepsilon^2d)}$.
  Accordingly, \eqref{eq:contradict_ineq} implies $\bar{k} \leq K$, which completes the proof. 
\end{proof}
\begin{remark}
If the original dimension $n$ is large enough, $\delta(\varepsilon,\varepsilon_0,\varepsilon_1,\varepsilon_2)$ becomes
	\[
		\delta(\varepsilon,\varepsilon_0,\varepsilon_1,\varepsilon_2) = \min\left(\frac{(1-\varepsilon)}{2L(1+\varepsilon)}\frac{d}{n}\varepsilon_1^2,
		\frac{(1-\varepsilon)\lambda_{\mathrm{min}}^*}{2L(1+\varepsilon)}\frac{d}{n}\varepsilon_2^2\right)
        \]
without the parameter $\varepsilon_0$ that is used in the definition of the active constraints.                
This means that the random projection allow us to ignore the boundary of the constraints,
while the convergence rate becomes $O(\frac{n}{d}\max(\varepsilon_1^{-2},\varepsilon_2^{-2}))$. 
Our methods become more efficient when calculating the gradient $\nabla f(x_k)$ is difficult and require the use of finite difference or forward-mode automatic differentiation.
We can obtain $M_k^\top \fgrad $ with $O(d)$ function evaluations. On the other hand, calculating the full gradients $\fgrad$ requires $O(n)$ function evaluations and this calculation is time-consuming when $n$ is large.   
Hence, if the computational complexity per iteration is dominated by the  gradient calculation, the total complexity is reduced compared with that of the deterministic algorithm.
\end{remark}
\begin{remark}
	We can prove convergence of the deterministic version of our algorithm (i.e., $M_k = I$) by the same argument in Section~\ref{section:Algorithm for linear inequality constraints}. However, the iteration complexity becomes $O(\max{ (\max{(\varepsilon_1^{-2}, \varepsilon_2^{-2})}, \varepsilon_0^{-1}\max{(\varepsilon_1^{-2}, \varepsilon_2^{-2})} ) })$ and we cannot ignore the parameter $\varepsilon_0$.
	When calculating the gradient $\nabla f(x_k)$ is difficult, the time complexity of the deterministic version to reach an approximate KKT point becomes $O(n) \times O(\max{ (\max{(\varepsilon_1^{-2}, \varepsilon_2^{-2})}, \varepsilon_0^{-1}\max{(\varepsilon_1^{-2}, \varepsilon_2^{-2})} ) })$, which is worse than ours with randomness ($O(d) \times O(\frac{n}{d}\max(\varepsilon_1^{-2},\varepsilon_2^{-2}))$).
\end{remark}
Here, we evaluate the computational complexity per iteration of our algorithm.
	Let $T_{value}, T_{grad}$ be the computational complexities of evaluating the function value and the gradient of $f,g_i$. 
	Our method requires $O\left(dn\vert \mathcal{A}_k\vert + m(T_{value}+T_{grad}) + mT_{value} \left\vert \log{\left(\frac{d}{\varepsilon_0 n}\right)}\right\vert \right)$ complexity.
	The first term $O(dn\vert\mathcal{A}_k\vert)$ comes from calculating $M_k^\top G_k$. Using automatic differentiation, we can reduce the complexity to $O(\vert\mathcal{A}_k\vert T_{grad})$. 
	The second term $O(m(T_{grad}+T_{value}))$ comes from calculating the active sets $\mathcal{A}_k$. The last term comes from the while-loop in the proposed method.
	From Propositions~\ref{proposition:Feasibility with linear inequality} and \ref{proposition:Feasibility with linear inequality2}, if $h\beta^k \le O(\min(\frac{n^2\varepsilon_0}{d},\frac{n\varepsilon_0}{d}))$ holds, the while-loop will terminate. 
        Then, we check the feasibility at most $O\left(\left\vert \log{\left(\frac{d}{\varepsilon_0 n}\right)}\right\vert\right)$ times.
The total  computational complexity of our algorithm is estimated by multiplying the iteration complexity in Theorem~\ref{theorem:global convergence} and the above complexity per iteration.

\section{Algorithm for nonlinear inequality constraints}{\label{section:Algorithm for nonlinear inequality constraints}}
In this section, we extend the application of the randomized subspace gradient method to nonlinear constrained problems. The algorithm is summarized in Algorithm~\ref{Algorithm:Third Algorithm}.

\begin{figure}[H]
	\begin{algorithm}[H]
		\caption{Randomized Subspace Gradient for Nonlinear Constrained Problems (RSC-NC)}
		\label{Algorithm:Third Algorithm}
		\begin{algorithmic}
			\State INPUT: $x_0$ (a feasible point), step size $h>0$, reduced dimension $d$, $\varepsilon_0, \delta_1, \varepsilon_2 > 0, 1>\beta>0, \{\mu_k\}_k>0$.
			\For{$k=0,1,2,...$}
			\State $\alpha_k = h$
			\State Set $M_k = \frac{1}{n} P_k^\top$ by sampling a random matrix $P_k \in \mathbb{R}^{d\times n} \sim \No$.
			\State Set $G_k = (\nabla g_i(x_k))_{i \in \mathcal{A}_k} \in \mathbb{R}^{n \times \vert \mathcal{A}_k \vert}$ and
                         $s_k = \left(\|M_k^\top \nabla g_i(x_k)\|\right)_{i\in \mathcal{A}_k}\in \mathbb{R}^{\vert\mathcal{A}_k\vert}$ using $\mathcal{A}_k$ defined in \eqref{eq:active_set}.
			\State Compute
			\begin{align}
				\label{eq:lambda in RSG-NC}
				\notag
				&\bar{\lambda}^{(k)} = -\left(\left(G_k-\mu_k\frac{\fgrad}{\|M_k^\top \fgrad\|}s_k^\top \right)^\top M_kM_k^\top G_k \right)^{-1}\\
				&\hspace{50pt} \left(G_k-\mu_k\frac{\fgrad}{\|M_k^\top \fgrad\|}s_k^\top\right)^\top M_kM_k^\top \fgrad
			\end{align}
			and 
			\begin{equation}
				\label{eq:direction1 in RSG-NC}
				d_k = - M_k^\top \left(\fgrad+G_k\bar{\lambda}^{(k)}\right).
			\end{equation}		
                        \If{$\|d_k\|< \delta_1$}
			\State Compute
			\begin{equation}
				\label{def_lambda_NC}
                                \lambda^{(k)} = -(G_k^\top M_k M_k^\top G_k)^{-1}G_k^\top M_k M_k \fgrad.
                        \end{equation}
			\If{$\min_{i}\lambda^{(k)}_i \geq -\varepsilon_2$}
			\State terminate.
			\EndIf
			\State Compute
			\begin{equation}	
				\label{eq:direction2 in RSG-NC}
				d_k = - \frac{\varepsilon_2 d}{n}M_k^\top G_k (G_k^\top M_k M_k^\top G_k)^{-1}\bar{d}^{(k)}
		        \end{equation}
using 
\begin{empheq}[left = {\bar{d}^{(k)} = \empheqlbrace \,}]{alignat = 2}
            & \mathbf{1} & \quad& \text {if } -\mathbf{1}^\top \lambda^{(k)}\ge \frac{\varepsilon_2}{2}, \label{eq:d-bar1 in direction2 of RSG-NC} \\
            & \chi_{-}(\lambda^{(k)}) + \frac{\sum_{\lambda_j^{(k)} \le 0 }-\lambda_j^{(k)}}{2\sum_{\lambda_j^{(k)}>0} \lambda_j^{(k)}}\chi_{+}(\lambda^{(k)})      & & \text {otherwise. }  \label{eq:d-bar2 in direction2 of RSG-NC}
\end{empheq}
			\EndIf
			\While{$x_k+\alpha_k M_kd_k$ is not feasible}
			\State $\alpha_k\leftarrow \beta \alpha_k$
			\EndWhile
			\State $x_{k+1}\leftarrow x_k+\alpha_k M_kd_k$
			\EndFor
		\end{algorithmic}
	\end{algorithm}
\end{figure}

\subsection{Outline of our algorithm: RSG-NC}
\paragraph{Update of Iterates}
Algorithm~\ref{Algorithm:Third Algorithm} calculates the sequence $\{x_k\}$ by using the step size $\alpha_k$ and the Lagrange multiplier $\bar{\lambda}^{(k)} \in \mathbb{R}^{\vert \mathcal{A}_k \vert}$ corresponding to the constraints in $\mathcal{A}_k$ as 
\begin{align}
	\notag
	x_{k+1} & = x_k - \alpha_k M_kM_k^\top (\fgrad + G_k\bar{\lambda}^{(k)})\\
	\notag
					& = x_k - \alpha_k M_k \left(I - M_k^\top G_k \left(\left(G_k-\mu_k\frac{\fgrad}{\|M_k^\top \fgrad\|}s_k^\top \right)^\top M_kM_k^\top G_k \right)^{-1}\right.\\
					&\hspace{70pt} \left.\left(G_k-\mu_k\frac{\fgrad}{\|M_k^\top \fgrad\|}s_k^\top\right)^\top M_k\right)M_k^\top \fgrad
	\label{eq:iterate1 in RSG-NC}
\end{align}
or
\begin{equation}
	\label{eq:iterate2 in RSG-NC}
	x_{k+1} = x_k - \frac{\varepsilon_2 d}{n}\alpha_k M_k M_k^\top G_k (G_k^\top M_k M_k^\top G_k)^{-1} \bar{d}^{(k)}.
\end{equation}

The search directions $x_{k+1} - x_{k}$ in (\ref{eq:iterate1 in RSG-NC}) and (\ref{eq:iterate2 in RSG-NC}) 
are descent directions with high probability as shown in Propositions~\ref{Proposition:dec_obj_extalgo} and \ref{Proposition:dec_obj_extalgo_2} later in Section~\ref{subsection:Decrease of objective value of RSG-NC}.
Propositions~\ref{proposition:feasibility in algorithm3 1} and \ref{proposition:feasibility in algorithm3 2} in Section~\ref{subsection:Feasibility of RSG-NC} ensure that if $\alpha_k$ are chosen to be less than some threshold, all points in $\{x_k\}$ are feasible with high probability. Hence, the while-loop reducing the value of $\alpha_k$ stops after a fixed number of iterations.  
RSG-NC first computes (\ref{eq:iterate1 in RSG-NC}) and if the search direction is small enough and the Lagrange multiplier $\lambda^{(k)}$, defined by (\ref{def_lambda}), is not nonnegative, it uses (\ref{eq:iterate2 in RSG-NC}) for the next iterate $x_{k+1}$.
The direction $d_k$ of (\ref{eq:direction1 in RSG-NC}) is identical to the one (\ref{def_dk_1}) in Algorithm~\ref{Secondary Algorithm} when $\mu_k = 0$.
Here, we define the following matrices,
\begin{align}
  R'_k := & M_k^\top G_k\left(\left(G_k-\mu_k\frac{\fgrad}{\|M_k^\top \fgrad\|}s_k^\top \right)^\top M_kM_k^\top G_k \right)^{-1} \label{defR'}\\
  &  \hspace{100pt} \left(G_k-\mu_k\frac{\fgrad}{\|M_k^\top \fgrad\|}s_k^\top\right)^\top M_k, \notag \\
  R_k:=& I -R'_k, \notag
\end{align}
and
\begin{equation}
	\label{def:yk}
	y_k := \frac{\varepsilon_2 d}{n}(G_k^\top M_k M_k^\top G_k)^{-1} \bar{d}^{(k)}.
\end{equation}
We can verify that $R_k$ and $R'_k$ are projection matrices; hence, $R_k^2=R_k$ and ${R'}_k^2=R'_k$.

We can therefore rewrite the next iterate (\ref{eq:iterate1 in RSG-NC}) as
\[
  x_{k+1}=x_k - \alpha_k M_kR_k M_k^\top \fgrad
\]
and (\ref{eq:iterate2 in RSG-NC}) as
\[
x_{k+1}=x_k - \alpha_k M_k M_k^\top G_k y_k.
\]
When $\|R_kM_k^\top \fgrad\| = \|M_k^\top (\fgrad + G_k \bar{\lambda}^{(k)})\|$ is small enough, Lemma~\ref{lemma:relathonship berweeen Z_k and R_k} shows that, by choosing a specific value of $\{\mu_k\}$, $\|Z_k M_k^\top \fgrad\| = \|M_k^\top (\fgrad + G_k \lambda^{(k)})\|$ is also small. We recall here that $Z_k$ is defined in (\ref{def:Z_k}).
Therefore, we will check whether $x_k$ is a KKT point or not with the Lagrange multiplier $\lambda^{(k)}$.
As will be shown later in Theorem~\ref{theorem:global convergence of RSG-NC}, when given inputs $d,\varepsilon_0,\varepsilon_2,\delta_1 = \sqrt{\frac{d(1-\varepsilon)}{n^2}}\varepsilon_1$ and $\mu_k = \frac{1}{2\sqrt{s_k^\top (G_k^\top M_k M_k^\top G_k)^{-1}s_k}}$,
Algorithm~\ref{Algorithm:Third Algorithm} generates an output $x_k$ that is guaranteed to be an $(\varepsilon_1,\varepsilon_2,O(\varepsilon_0))$-KKT point.

\subsection{Decrease in objective value}
\label{subsection:Decrease of objective value of RSG-NC}
We will make the following assumptions.
\begin{assume}{\label{Assumption:Constraints}}
  \begin{enumerate}[label=(\roman*)]
		\item All constraints, $g_i$ for $\forall i$, are $L_g$-smooth on the feasible set.{\label{Assumption:const_Lg-smooth}}
		\item There exists an interior point $x$, i.e., $g_i(x)<0$ for each $i$. \label{Assumption:const_int}
	\end{enumerate}
\end{assume}
Assumption~\ref{Assumption:Constraints}\ref{Assumption:const_Lg-smooth} is satisfied when Assumption~\ref{Assumption:Objective}{\ref{Level Set is compact or feasible set is compact}} is satisfied and $g_i$ are twice continuously differentiable functions. 
Assumptions~\ref{Assumption:Objective}\ref{Level Set is compact or feasible set is compact} and \ref{Assumption:Constraints}\ref{Assumption:const_Lg-smooth} imply that $\|\nabla g_i\|$ are continuous and bounded on the feasible set ($\|\nabla g_i(x)\|\le U_g$).
		
\begin{assume}\label{Assumption:minimum of Constraints gradient}{\label{Assumption:maximum of Constraints}}
The following problems for all $i$:
\begin{eqnarray}
		\min_{x} \|\nabla g_i(x)\|   \quad \mathrm{s.t.}\;  f(x)\le f(x_0),~[-g_i(x)]_+ \le \varepsilon_0 \|\nabla g_i(x) \|, ~x\in \mathcal{C}
		\label{Problem:lower bound of gradient}
	\end{eqnarray}
	and problems for all $i$:
	\begin{eqnarray}
		\max_{x} g_i(x) \quad \mathrm{s.t.}\;  f(x)\le f(x_0),~ \varepsilon_0 \|\nabla g_i(x) \| \le [-g_i(x)]_+ , ~x\in \mathcal{C}
		\label{Problem:upper bound  of function value} 
	\end{eqnarray}
	have nonzero optimal values. 
\end{assume}
This assumption means that $\|\nabla g_i(x)\|$ is not zero when $x$ is close to the boundary $g_i(x) = 0$ and $g_i(x)$ is not zero when 
$x$ is far from the boundary. If the problems~(\ref{Problem:lower bound of gradient}) and (\ref{Problem:upper bound  of function value}) have no solutions, we set the optimal values to $\infty$ and $-\infty$, respectively. 
We set $l_g^{\varepsilon_0}~(>0)$ for the minimum of all optimal values of \eqref{Problem:lower bound of gradient} over all $i$ 
and $u_g^{\varepsilon_0}~(<0)$ for the maximum of all optimal values of  \eqref{Problem:upper bound  of function value} over all $i$.
Let $g_*~(<0)$ denote the minimum of all optimal values of the following problems over $i$:
\[
	\min_{x} g_i(x) \quad \mathrm{s.t.}\; f(x)\le f(x_0), ~x\in \mathcal{C}.
\]
$g_*$ is bounded from Assumptions~\ref{Assumption:Objective}\ref{Level Set is compact or feasible set is compact} and \ref{Assumption:Constraints}\ref{Assumption:const_Lg-smooth}.
The following lemma implies that if the constraints are all convex, Assumption~\ref{Assumption:maximum of Constraints}
is not necessary because it is proved to hold from more general standard assumptions as follows.

\begin{lemma}\label{Lemma:upper bound out boundary} 
  Suppose that Assumptions~\ref{Assumption:Objective}\ref{Level Set is compact or feasible set is compact} and \ref{Assumption:Constraints} hold and that $g_i$ are all convex.
  Then, Assumption~\ref{Assumption:maximum of Constraints} holds.
\end{lemma}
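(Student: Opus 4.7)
The plan is to analyze each of problems \eqref{Problem:lower bound of gradient} and \eqref{Problem:upper bound  of function value} separately. In both cases I would first argue that the feasible region is compact, then use the convexity of $g_i$ together with Assumption~\ref{Assumption:Constraints}\ref{Assumption:const_int} (existence of an interior point $\bar x$ with $g_i(\bar x)<0$) to rule out the ``degenerate'' values of the objective. If the feasible region happens to be empty, the convention makes the optimal value $\pm\infty$, which is nonzero; otherwise the extremal value will be attained by continuity on a compact set.

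For problem~\eqref{Problem:lower bound of gradient}, the feasible region is the intersection of the compact set $\{x\in\mathcal{C}\mid f(x)\le f(x_0)\}$ (Assumption~\ref{Assumption:Objective}\ref{Level Set is compact or feasible set is compact}) with the closed set $\{x\mid [-g_i(x)]_+\le \varepsilon_0\|\nabla g_i(x)\|\}$, hence is compact. The map $x\mapsto\|\nabla g_i(x)\|$ is continuous by Assumption~\ref{Assumption:Constraints}\ref{Assumption:const_Lg-smooth}, so it suffices to show $\nabla g_i(x)\ne 0$ on this set. Suppose for contradiction that $\nabla g_i(\tilde x)=0$ at some feasible $\tilde x$. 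By convexity, $\tilde x$ is a global minimizer of $g_i$, so $g_i(\tilde x)\le g_i(\bar x)<0$, and therefore $[-g_i(\tilde x)]_+>0=\varepsilon_0\|\nabla g_i(\tilde x)\|$, contradicting feasibility. Thus $\|\nabla g_i\|$ has a strictly positive minimum on the feasible region, giving $l_g^{\varepsilon_0}>0$.

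For problem~\eqref{Problem:upper bound  of function value}, compactness of the feasible region follows identically. The objective $g_i$ is continuous, so it suffices to show $g_i(x)<0$ for every feasible $x$. If instead $g_i(\tilde x)=0$ at some feasible $\tilde x$, then $[-g_i(\tilde x)]_+=0$ forces $\varepsilon_0\|\nabla g_i(\tilde x)\|\le 0$, i.e.\ $\nabla g_i(\tilde x)=0$. By convexity, $\tilde x$ is then a global minimizer of $g_i$, whence $g_i(\bar x)\ge g_i(\tilde x)=0$, contradicting $g_i(\bar x)<0$. Hence $g_i<0$ on the feasible region and $u_g^{\varepsilon_0}<0$ by compactness.

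The main obstacle is not computational but conceptual: one must correctly chain the definition of the active-like constraint $[-g_i(x)]_+\le\varepsilon_0\|\nabla g_i(x)\|$ with the fact that convex functions achieve their global minimum wherever the gradient vanishes, and use the interior-point assumption to rule out this minimum being $\ge 0$. Once these observations are in place the two cases are essentially symmetric, and the extreme value theorem on the compact feasible sets closes the argument.
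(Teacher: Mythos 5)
Your proposal is correct and follows essentially the same argument as the paper: in both problems, the constraint $[-g_i(x)]_+\lessgtr\varepsilon_0\|\nabla g_i(x)\|$ forces the degenerate case to occur only at a point where $g_i$ vanishes together with its gradient, and convexity then makes that point a global minimizer of $g_i$ with value $0$, contradicting the interior-point assumption. The only cosmetic difference is that you spell out the compactness/extreme-value step that the paper states in one line.
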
	
\begin{proof}
	If the feasible set is not empty,  from Assumptions~\ref{Assumption:Objective}\ref{Level Set is compact or feasible set is compact} and \ref{Assumption:Constraints}\ref{Assumption:const_Lg-smooth}, there exist optimal solutions for (\ref{Problem:lower bound of gradient}) and (\ref{Problem:upper bound  of function value}), respectively.
	Suppose that there exists an optimal solution $\hat{x}^*$ such that $\|\nabla g_i(\hat{x}^*)\|=0$ for (\ref{Problem:lower bound of gradient}).  
	Since $ [-g_i(\hat{x}^*)]_+ \le \varepsilon_0\|\nabla g_i(\hat{x}^*)\|$ holds, $g_i(\hat{x}^*)=0$. Similarly 
	under the assumption that there exists $\tilde{x}^*$ such that $g_i(\tilde{x}^*)=0$ for (\ref{Problem:upper bound of function value}),  
	we see that $\varepsilon_0 \|\nabla g_i(\tilde{x}^*)\|\le [- g_i(\tilde{x}^*)]_+$ holds and obtain $\|\nabla g_i(\tilde{x}^*)\|=0$.
	By the convexity of $g_i$, for all $x$ and $x^* \in \{\hat{x}^*,~\tilde{x}^*\}$
	\[
	g_i(x)\ge g_i(x^*)+\langle \nabla g_i(x^*),x-x^*\rangle = 0
	\]
	holds for any $i$; hence, $g_i(x)\ge 0$. This contradicts Assumption~\ref{Assumption:Constraints}\ref{Assumption:const_int}.
\end{proof}

Next, we prove that the update direction in Algorithm~\ref{Algorithm:Third Algorithm} is a descent direction for a specific value of $\{\mu_k\}_k$.
Let us consider the orthogonal projection of $ M_k^\top \nabla f(x_k)$ into the image of $M_k^\top G_k$:
	\begin{equation}{\label{eq:projection gradient to Constraints subspace}}
		M_k^\top \nabla f(x_k)= -M_k^\top G_k\lambda^{(k)} + Z_k M_k^\top \fgrad,
	\end{equation}
        where $\lambda^{(k)}$ is defined by (\ref{def_lambda}).
\begin{lemma}
  {\label{lemma:relathonship berweeen Z_k and R_k}}
  Suppose that Assumption~\ref{Assumption:existence of inverse} holds. 
	Let $\mu_k = \frac{1}{2\sqrt{s_k^\top (G_k^\top M_k M_k^\top G_k)^{-1} s_k}}$.
	Then,
        \begin{itemize}
        \item
          $\left(\left(G_k-\mu_k\frac{\fgrad}{\|M_k^\top \fgrad\|}s_k^\top \right)^\top M_kM_k^\top G_k \right)^{-1}$ exists,
\item the following inequalities hold:
	  \begin{equation}{\label{eq:eval_1}}
		\frac23\|Z_k M_k^\top \fgrad \|^2 \le \fgrad^\top M_k R_k M_k^\top \fgrad \le 2 \|Z_k M_k^\top \fgrad\|^2,
	\end{equation}
	\begin{equation}{\label{eq:eval_2}}
		\|Z_k M_k^\top \fgrad\|^2\le \|R_k M_k^\top \fgrad\|^2 \le 2 \|Z_k M_k^\top \fgrad\|^2, and
	\end{equation}
        \item
          $d_k = - M_kR_kM_k^\top \fgrad$ is a descent direction.
        \end{itemize}
\end{lemma}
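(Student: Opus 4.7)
The plan is to view the matrix inside the inverse as a rank-one perturbation of $A_k := G_k^\top M_k M_k^\top G_k$ and apply Sherman--Morrison. Let $v_k := M_k^\top \nabla f(x_k)/\|M_k^\top \nabla f(x_k)\|$ and $w_k := G_k^\top M_k v_k$. A direct expansion shows that the matrix in (\ref{eq:lambda in RSG-NC}) equals $A_k - \mu_k s_k w_k^\top$. Using the defining equation (\ref{def_lambda_NC}) of $\lambda^{(k)}$ one gets the key identity $w_k = -A_k \lambda^{(k)}/\|M_k^\top \nabla f(x_k)\|$, and consequently $\mu_k w_k^\top A_k^{-1} s_k = -\tau$ where $\tau := \mu_k (\lambda^{(k)})^\top s_k/\|M_k^\top \nabla f(x_k)\|$.

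For invertibility, I would show $|\tau| \le 1/2$. Cauchy--Schwarz in the $A_k^{-1}$ inner product gives $|(\lambda^{(k)})^\top s_k| \le \sqrt{(\lambda^{(k)})^\top A_k \lambda^{(k)}}\sqrt{s_k^\top A_k^{-1} s_k}$, and the orthogonal decomposition (\ref{eq:projection gradient to Constraints subspace}) together with the fact that $I-Z_k$ is a projection gives $(\lambda^{(k)})^\top A_k \lambda^{(k)} = \|M_k^\top G_k \lambda^{(k)}\|^2 = \|M_k^\top \nabla f(x_k)\|^2 - \|Z_k M_k^\top \nabla f(x_k)\|^2 \le \|M_k^\top \nabla f(x_k)\|^2$. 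Substituting $\mu_k = 1/(2\sqrt{s_k^\top A_k^{-1} s_k})$ then yields $|\tau| \le 1/2$, so $\gamma := 1+\tau = 1 - \mu_k w_k^\top A_k^{-1} s_k \in [1/2, 3/2]$ and the Sherman--Morrison inverse exists.

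Next I would apply Sherman--Morrison to express $R_k'$ explicitly, act on $M_k^\top \nabla f(x_k)$, and simplify using the identities $G_k^\top M_k Z_k = 0$ and $w_k = -A_k \lambda^{(k)}/\|M_k^\top \nabla f(x_k)\|$. After collapsing the correction term to a single rank-one contribution, this gives
\[
R_k M_k^\top \nabla f(x_k) = Z_k M_k^\top \nabla f(x_k) + \zeta\, q_k, \qquad \zeta := \frac{\mu_k \|Z_k M_k^\top \nabla f(x_k)\|^2}{\gamma \|M_k^\top \nabla f(x_k)\|},
\]
where $q_k := M_k^\top G_k A_k^{-1} s_k$. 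Crucially, $Z_k q_k = 0$, so $q_k \perp Z_k M_k^\top \nabla f(x_k)$. Combining this with the identity $\nabla f(x_k)^\top M_k q_k = -(\lambda^{(k)})^\top s_k$ yields $\nabla f(x_k)^\top M_k R_k M_k^\top \nabla f(x_k) = \|Z_k M_k^\top \nabla f(x_k)\|^2 (1 - \tau/\gamma)$. Since $\tau \in [-1/2, 1/2]$ and the function $\tau \mapsto \tau/(1+\tau)$ is increasing on this interval, $\tau/\gamma \in [-1, 1/3]$, so $1 - \tau/\gamma \in [2/3, 2]$, which is (\ref{eq:eval_1}). By orthogonality, $\|R_k M_k^\top \nabla f(x_k)\|^2 = \|Z_k M_k^\top \nabla f(x_k)\|^2 + \zeta^2 s_k^\top A_k^{-1} s_k$, and using $\mu_k^2 s_k^\top A_k^{-1} s_k = 1/4$ together with $\|Z_k M_k^\top \nabla f(x_k)\|^2 \le \|M_k^\top \nabla f(x_k)\|^2$ and $\gamma \ge 1/2$ gives (\ref{eq:eval_2}).

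The descent property follows at once: $\nabla f(x_k)^\top M_k d_k = -\nabla f(x_k)^\top M_k R_k M_k^\top \nabla f(x_k) \le -\tfrac{2}{3}\|Z_k M_k^\top \nabla f(x_k)\|^2 < 0$ whenever $Z_k M_k^\top \nabla f(x_k) \ne 0$. The main technical obstacle is the Sherman--Morrison algebra itself: one must exploit $G_k^\top M_k Z_k = 0$ and the explicit form of $w_k$ carefully so that the correction in $R_k M_k^\top \nabla f(x_k)$ collapses to the single scalar $\zeta$ times $q_k$. The other subtle point is the sharp asymmetric bound $\tau/\gamma \in [-1,1/3]$ (rather than the crude $|\tau/\gamma| \le 1$), which is what forces the $2/3$ lower constant in (\ref{eq:eval_1}) and hence the clean descent estimate.
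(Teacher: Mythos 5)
Your proposal is correct and follows essentially the same route as the paper's proof: the paper factors the matrix as $(I+\nu\lambda^{(k)\top})(G_k^\top M_kM_k^\top G_k)$ with $\nu=\mu_k s_k/\|M_k^\top\nabla f(x_k)\|$ and inverts the rank-one perturbation of the identity, which is exactly your Sherman--Morrison step in different packaging, and your $\tau$, $\gamma$, the Cauchy--Schwarz bound $|\tau|\le 1/2$, and the orthogonal decomposition $R_kM_k^\top\nabla f(x_k)=Z_kM_k^\top\nabla f(x_k)+\zeta q_k$ coincide with the paper's $\nu^\top\lambda^{(k)}$, $1+\nu^\top\lambda^{(k)}$, and equations \eqref{eq:oblique_2} and \eqref{eq:norm_oblique_2}. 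The final scalar estimates ($1/\gamma\in[2/3,2]$ and $\mu_k^2 s_k^\top(G_k^\top M_kM_k^\top G_k)^{-1}s_k=1/4$) are likewise identical, so no gap.
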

\begin{proof}
  Let $\nu  = \frac{\mu_ks_k}{\|M_k^\top \fgrad\|}$; we will show that $\left(G_k-\fgrad\nu^\top \right)^\top M_kM_k^\top G_k$ is non-singular.
Here,
	\begin{alignat}{2}
		\notag
		\left(G_k-\fgrad\nu^\top \right)^\top M_kM_k^\top G_k 
		=& G_k^\top M_kM_k^\top G_k -\nu\fgrad^\top M_kM_k^\top G_k\\
		\notag
		=& G_k^\top M_kM_k^\top G_k + \nu\lambda^{(k)\top} G_k^\top M_kM_k^\top G_k \\
		=& (I + \nu\lambda^{(k)\top})(G_k^\top M_kM_k^\top G_k), \label{matdecomp}
	\end{alignat}
	where the second equality follows from (\ref{eq:projection gradient to Constraints subspace}).
	We will confirm that $I + \nu\lambda^{(k)\top}$ is invertible: notice that $\nu$ is an eigenvector of $I + \nu\lambda^{(k)\top}$ whose corresponding eigenvalue is $1+\nu^\top \lambda^{(k)} $.
	All the other eigenvectors are orthogonal to $\lambda^{(k)}$ and their corresponding eigenvalues are equal to one.
	From the definition of $\nu,\mu_k,\lambda^{(k)}$, 
	\begin{flalign*}
		\notag
		\vert \nu^\top \lambda^{(k)}\vert
		&= \frac{\mu_k}{\|M_k^\top \fgrad\|}\vert s_k^\top (G_k^\top M_k M_k^\top G_k)^{-1}G_k^\top M_k M_k\fgrad\vert &\\
		\notag
		&\le \frac{\mu_k}{\|M_k^\top \fgrad\|} \|M_k^\top G_k (G_k^\top M_k M_k^\top G_k)^{-1}s_k\|\|M_k^\top \fgrad\| \notag\\
		&= \mu_k \sqrt{s_k^\top (G_k^\top M_k M_k^\top G_k)^{-1}s_k} = \frac12
	\end{flalign*}
	holds, as $\|M_k^\top G_k (G_k^\top M_k M_k^\top G_k)^{-1}s_k\|^2=s_k^\top (G_k^\top M_k M_k^\top G_k)^{-1}s_k$.
	Hence, we obtain $1/2 \le \nu^\top \lambda^{(k)} +1\le 3/2$ which implies that $(I + \nu\lambda^{(k)\top})$ is invertible as all of its eigenvalues are non-zero. From Assumption~\ref{Assumption:existence of inverse}, $G_k^\top M_k M_k^\top G_k$ is also invertible; thus, from \eqref{matdecomp}, 
	$\left(\left(G_k-\fgrad\nu^\top \right)^\top M_kM_k^\top G_k\right)^{-1}$ exists.

	Next, we calculate $\fgrad^\top M_k R_k M_k^\top \fgrad$ and $\|R_k M_k^\top \fgrad\|^2$.
	From (\ref{eq:projection gradient to Constraints subspace}) and the definition of the orthogonal projection, we obtain 
	\begin{equation}{\label{eq:orthogonal projection norm}}
		\|M_k^\top \fgrad\|^2 = \|M_k ^\top G_k \lambda^{(k)}\|^2 + \|Z_k M_k^\top \fgrad\|^2.
	\end{equation}
	In order to project $M_k^\top \fgrad$ by $R_k$, first of all, we need to rewrite $R'_k$ in \eqref{defR'} as
	\begin{alignat*}{2}
		\notag
		R_k'& = M_k^\top G_k\left(\left(G_k-\fgrad\nu^\top \right)^\top M_kM_k^\top G_k \right)^{-1}\left(G_k-\fgrad\nu^\top \right)^\top M_k & \\
		\notag
		& = M_k^\top G_k(G_k^\top M_kM_k^\top G_k)^{-1}(I + \nu\lambda^{(k)\top})^{-1}\left(G_k-\fgrad\nu^\top \right)^\top M_k
	\end{alignat*}
        using \eqref{matdecomp}.
	We project $M_k^\top \fgrad$ by $R_k'$ and obtain
	\begin{alignat*}{2}
		\notag
		& R_k' M_k^\top \fgrad \\
		\notag
		& = M_k^\top G_k (G_k^\top M_k M_k^\top G_k)^{-1}(I + \nu\lambda^{(k)\top} )^{-1} (G_k^\top M _k M_k^\top \fgrad  - \|M_k^\top \fgrad\|^2 \nu ) &\\
		\notag
		& = M_k^\top G_k (G_k^\top M_k M_k^\top G_k)^{-1}(I + \nu\lambda^{(k)\top} )^{-1} (-G_k^\top M _k M_k^\top G_k\lambda^{(k)}  - \|M_k^\top \fgrad\|^2 \nu ) \\
		\notag
		& = -M_k^\top G_k (G_k^\top M_k M_k^\top G_k)^{-1}(I + \nu\lambda^{(k)\top} )^{-1}\\
		& \hspace{150pt} ((I+ \nu\lambda^{(k)\top})G_k^\top M _k M_k^\top G_k\lambda^{(k)}  + \|Z_kM_k^\top \fgrad\|^2 \nu )\\
		& = -M_k^\top G_k \lambda^{(k)} - \|Z_kM_k^\top \fgrad\|^2 M_k^\top G_k (G_k^\top M_k M_k^\top G_k)^{-1}(I + \nu\lambda^{(k)\top} )^{-1} \nu. &
	\end{alignat*}
	The second equality follows from (\ref{eq:projection gradient to Constraints subspace}) and the third equality comes from (\ref{eq:orthogonal projection norm}).
	Note that $\nu$ is a eigenvector of $(I + \nu\lambda^{(k)\top})^{-1}$, and
	\[
	(I + \nu\lambda^{(k)\top})^{-1}\nu = \frac{\nu}{1+\lambda^{(k)\top} \nu}
	\]
	holds. These relations leads us to
	\begin{equation*}
		R_k' M_k^\top \fgrad = -M_k^\top G_k \lambda^{(k)} - \frac{\|Z_kM_k^\top \fgrad\|^2}{1 + \lambda^{(k)\top} \nu} M_k^\top G_k (G_k^\top M_k M_k^\top G_k)^{-1}\nu.
	\end{equation*}
	From this equation, we obtain
	\begin{alignat}{2}
		\notag
		&\fgrad^\top M_k R'_k M_k^\top \fgrad \\
		\notag
		&= -\fgrad^\top M_kM_k^\top G_k \lambda^{(k)} - \frac{\|Z_kM_k^\top \fgrad\|^2}{1 + \lambda^{(k)\top} \nu} \fgrad^\top M_kM_k^\top G_k (G_k^\top M_k M_k^\top G_k)^{-1}\nu\\
		{\label{eq:oblique}}
		&= \| M_k^\top G_k \lambda^{(k)} \|^2 +\frac{ \nu^\top \lambda^{(k)} }{ \nu^\top \lambda^{(k)}+1} \|Z_kM_k^\top \fgrad\|^2, 
	\end{alignat}
	where the last equality follows from (\ref{eq:projection gradient to Constraints subspace}).
	We also obtain
	\begin{align}{\label{eq:norm_oblique}}
		\notag
		&\|R'_kM_k^\top \fgrad\|^2\\
		&= \| M_k^\top G_k \lambda^{(k)} \|^2 
		+\frac{ 2\nu^\top \lambda^{(k)} }{ \nu^\top \lambda^{(k)}+1} \|Z_kM_k^\top \fgrad\|^2
		+\frac{\|Z_kM_k^\top \fgrad\|^4}{(\nu^\top \lambda^{(k)}+1)^2}\nu^\top (G_k^\top M_k M_k^\top G_k)^{-1} \nu.
	\end{align}
	Recalling that $R_k = I - R_k'$, we have
	\begin{alignat}{2}
		\notag
		&\fgrad^\top M_k R_k M_k^\top \fgrad \\
		\notag
		&= \|M_k^\top \fgrad\|^2 - \fgrad^\top M_k R'_k M_k^\top \fgrad\\
		\notag
		&= \|M_k ^\top G_k \lambda^{(k)}\|^2 + \|Z_kM_k^\top \fgrad\|^2 - \| M_k^\top G_k \lambda^{(k)} \|^2 - \frac{ \nu^\top \lambda^{(k)} }{ \nu^\top \lambda^{(k)}+1} \|Z_kM_k^\top \fgrad\|^2  \\
		{\label{eq:oblique_2}}
		&= \frac{1}{ \nu^\top \lambda^{(k)}+1} \|Z_kM_k^\top \fgrad\|^2.
	\end{alignat}
	The second equality follows from (\ref{eq:orthogonal projection norm}) and (\ref{eq:oblique}).
	Using  (\ref{eq:oblique_2}) and $1/2 \le \nu^\top \lambda^{(k)} +1\le 3/2$, we find that
	\begin{equation*}
		\frac23\|Z_kM_k^\top \fgrad\|^2 \le \fgrad^\top M_k R_k M_k^\top \fgrad \le 2 \|Z_kM_k^\top \fgrad\|^2.
	\end{equation*}        
	Using $R_k = I - R_k'$, we also have
	\begin{alignat}{2}
		\notag
		&\|R_k M_k^\top \fgrad\|^2& \\
		\notag
		&= \|M_k^\top \fgrad\|^2 - 2 \fgrad^\top M_k R'_k M_k^\top \fgrad + \|R'_kM_k^\top \fgrad\|^2\\
		{\label{eq:norm_oblique_2}}
		&= \|Z_kM_k^\top \fgrad\|^2 + \frac{\|Z_kM_k^\top \fgrad\|^4}{(\nu^\top \lambda^{(k)}+1)^2}\nu^\top (G_k^\top M_k M_k^\top G_k)^{-1} \nu.
	\end{alignat}
	The last equality follows from (\ref{eq:orthogonal projection norm}),(\ref{eq:oblique})\;and\;(\ref{eq:norm_oblique}).
	We can now evaluate the last term of (\ref{eq:norm_oblique_2}) as 
	\begin{alignat*}{2}
		&\|Z_kM_k^\top \fgrad\|^2\nu^\top (G_k^\top M_k M_k^\top G_k)^{-1} \nu&\\
		&= \frac{\mu_k^2\|Z_kM_k^\top \fgrad\|^2}{\|M_k^\top\fgrad\|^2} s_k^\top (G_k^\top M_k M_k^\top G_k)^{-1} s_k\\
		&= \frac14\frac{\|Z_kM_k^\top \fgrad\|^2}{\|M_k^\top\fgrad\|^2} \\
		&\le \frac14.
	\end{alignat*}
	The first equality follows from definition of $\nu$, the second equality follows from definition of $\mu_k$, and 
	the last inequality follows from (\ref{eq:orthogonal projection norm}).
	Finally, by using (\ref{eq:norm_oblique_2}) and the above upper bound, we obtain
	\begin{equation*}
		\|Z_kM_k^\top \fgrad\|^2\le\|R_k M_k^\top \fgrad\|^2 \le 2 \|Z_kM_k^\top \fgrad\|^2.
	\end{equation*}

  Lastly, we can easily confirm from \eqref{eq:oblique_2} that  $d_k = - M_kR_kM_k^\top \fgrad$ is a descent direction.
\end{proof}
\begin{proposition}\label{Proposition:dec_obj_extalgo}
	Let $\mu_k = \frac{1}{2\sqrt{s_k^\top (G_k^\top M_k M_k^\top G_k)^{-1}s_k}}$. Under Assumptions~\ref{Assumption:Objective}\ref{Objective is L-smooth} and \ref{Assumption:existence of inverse}, when $d_k = - M_k^\top (\fgrad + G_k \bar{\lambda}^{(k)})$ of (\ref{eq:direction1 in RSG-NC}) in Algorithm~\ref{Algorithm:Third Algorithm}, we have
	\[
	f(x_{k+1})\le f(x_k) - \left(\frac23 \alpha_k\ -\frac{\alpha_k^2 L(1+\varepsilon)}{n} \right)\|Z_kM_k^\top \fgrad\|^2
	\]
	with probability at least $1-2\exp{(-C_0 \varepsilon^2 n)}$. 
\end{proposition}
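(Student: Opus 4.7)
The plan is to mirror the argument of Proposition~\ref{Proposition:dec_obj}, but replace the orthogonal projection $Z_k$ with the oblique projection $R_k$ and absorb the resulting discrepancy by invoking Lemma~\ref{lemma:relathonship berweeen Z_k and R_k}, which compares $R_k$ to $Z_k$ precisely for the choice of $\mu_k$ stated in the proposition.

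First I would start from the $L$-smoothness inequality
\[
f(x_{k+1}) - f(x_k) \le \langle \nabla f(x_k),\, x_{k+1}-x_k\rangle + \tfrac{L}{2}\|x_{k+1}-x_k\|^2,
\]
and substitute $x_{k+1} - x_k = \alpha_k M_k d_k = -\alpha_k M_k R_k M_k^\top \nabla f(x_k)$, which follows from the definitions of $d_k$ in \eqref{eq:direction1 in RSG-NC} and of $R_k$. This produces one linear term $-\alpha_k \nabla f(x_k)^\top M_k R_k M_k^\top \nabla f(x_k)$ and one quadratic term $\tfrac{L\alpha_k^2}{2}\|M_k R_k M_k^\top \nabla f(x_k)\|^2$.

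For the linear term, I would apply the lower bound in \eqref{eq:eval_1} of Lemma~\ref{lemma:relathonship berweeen Z_k and R_k}, which yields
\[
\nabla f(x_k)^\top M_k R_k M_k^\top \nabla f(x_k) \ge \tfrac{2}{3}\|Z_k M_k^\top \nabla f(x_k)\|^2.
\]
For the quadratic term, I would first use $M_k = \tfrac{1}{n} P_k^\top$ and Lemma~\ref{lemma:Johnson} applied to the vector $R_k M_k^\top \nabla f(x_k)$ to get
\[
\|M_k R_k M_k^\top \nabla f(x_k)\|^2 = \tfrac{1}{n^2}\|P_k^\top R_k M_k^\top \nabla f(x_k)\|^2 \le \tfrac{1+\varepsilon}{n}\|R_k M_k^\top \nabla f(x_k)\|^2
\]
with probability at least $1-2\exp(-C_0\varepsilon^2 n)$, then apply the upper bound in \eqref{eq:eval_2} of Lemma~\ref{lemma:relathonship berweeen Z_k and R_k} to obtain $\|R_k M_k^\top \nabla f(x_k)\|^2 \le 2\|Z_k M_k^\top \nabla f(x_k)\|^2$. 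These two inequalities together yield the quadratic bound $\tfrac{L\alpha_k^2(1+\varepsilon)}{n}\|Z_k M_k^\top \nabla f(x_k)\|^2$.

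Combining the linear and quadratic bounds gives the claimed inequality. The probability $1 - 2\exp(-C_0\varepsilon^2 n)$ reflects the single invocation of Lemma~\ref{lemma:Johnson} in the quadratic-term step; the inequalities drawn from Lemma~\ref{lemma:relathonship berweeen Z_k and R_k} are deterministic once Assumption~\ref{Assumption:existence of inverse} holds, since that lemma has already absorbed the probabilistic content needed to bound the ratio $\nu^\top \lambda^{(k)}$. The only subtlety, and thus the main potential obstacle, is to make sure one does not double-count the Johnson--Lindenstrauss probability: the factor of $R_k$ versus $Z_k$ does not require any additional random projection estimate, so a single application of Lemma~\ref{lemma:Johnson} to $R_k M_k^\top \nabla f(x_k)$ suffices.
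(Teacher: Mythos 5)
Your proposal is correct and follows essentially the same route as the paper's own proof: $L$-smoothness with $x_{k+1}-x_k=-\alpha_k M_kR_kM_k^\top \nabla f(x_k)$, one application of Lemma~\ref{lemma:Johnson} to bound $\|M_kR_kM_k^\top\nabla f(x_k)\|^2$, and then \eqref{eq:eval_1} and \eqref{eq:eval_2} to pass from $R_k$ to $Z_k$. Your observation that Lemma~\ref{lemma:relathonship berweeen Z_k and R_k} contributes no additional failure probability, so that only the single Johnson--Lindenstrauss invocation counts, matches the paper's accounting.
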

\begin{proof}
	Using the same argument as in Proposition~\ref{Proposition:dec_obj}, from the $L$-smoothness (\ref{eq:L-smooth}) of the objective function $f$, Lemma~\ref{lemma:Johnson}, and $x_{k+1}-x_{k} = -\alpha_k M_k R_k M_k^\top \fgrad$ from (\ref{eq:iterate1 in RSG-NC}), we obtain
	\begin{alignat*}{2}\label{eq:L-smooth inequality}
		\notag
		&f(x_{k+1}) - f(x_k)\\
		\notag
		&\le- \alpha_k\fgrad^\top M_kR_k M_k^\top \fgrad+\frac{\alpha_k^2L}{2}\| M_kR_k M_k^\top \fgrad\|^2 \\
		&\le - \alpha_k\fgrad^\top M_kR_k M_k^\top \fgrad+\frac{\alpha_k^2L(1+\varepsilon)}{2n}\| R_k M_k^\top \fgrad\|^2.
	\end{alignat*}
	The last inequality follows form Lemma~\ref{lemma:Johnson}.
	Combining (\ref{eq:eval_1}) and (\ref{eq:eval_2}), we find that
	\begin{equation*}{\label{eq:finalform}}
		f(x_{k+1}) - f(x_k) \le -\left(\frac23 \alpha_k\ -\frac{\alpha_k^2 L(1+\varepsilon)}{n} \right)\|Z_kM_k^\top \fgrad\|^2
	\end{equation*}
	holds with probability at least $1-2\exp{(-C_0 \varepsilon^2 n)}$.
\end{proof}

\begin{proposition}{\label{Proposition:dec_obj_extalgo_2}}
	Suppose that Assumptions~{\ref{Assumption:Objective}\ref{Objective is L-smooth}} and \ref{Assumption:existence of inverse} hold and let $d_k= -\frac{\varepsilon_2d}{n} M_k^\top G_k(G_k^\top M_k M_k^\top G_k)^{-1}\bar{d}^{(k)}$, i.e.,  (\ref{eq:direction2 in RSG-NC}) with $\bar{d}^{(k)}$ defined by (\ref{eq:d-bar1 in direction2 of RSG-NC}) or (\ref{eq:d-bar2 in direction2 of RSG-NC}) in Algorithm~\ref{Algorithm:Third Algorithm} and $\min_{i}\lambda^{(k)}_i < - \varepsilon_2$.
	Then,
	\begin{equation*}
		f(x_{k+1})- f(x_k)
		\le -\alpha_k\left(1 - \frac{(1+\varepsilon)L\alpha_k\vert \mathcal{A}_k\vert }{(1-\varepsilon)\lambda_{\mathrm{min}}(G_k^\top G_k)}\right)\frac{\varepsilon_2^2d}{2n}
	\end{equation*} 
	holds with probability at least $1-2\exp{(-C_0\varepsilon^2n)}-2\exp{(-C_0\varepsilon^2d)}$.
\end{proposition}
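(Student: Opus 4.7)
The plan is to mimic the proof of Proposition~\ref{Proposition:obj_dec_2}, starting from $L$-smoothness applied to the update $x_{k+1}-x_k = \alpha_k M_k d_k$:
\[
f(x_{k+1}) - f(x_k) \le \alpha_k \nabla f(x_k)^\top M_k d_k + \frac{L\alpha_k^2}{2}\|M_k d_k\|^2.
\]
I then need to extract a descent term of order $-\varepsilon_2^2 d/(2n)$ from the first summand and a quadratic term controlled by $|\mathcal{A}_k|/\lambda_{\min}(G_k^\top G_k)$ from the second summand.

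For the first-order term, substituting the definition of $d_k$ in \eqref{eq:direction2 in RSG-NC} and using the definition \eqref{def_lambda_NC} of $\lambda^{(k)}$, which gives $\nabla f(x_k)^\top M_k M_k^\top G_k (G_k^\top M_k M_k^\top G_k)^{-1} = -\lambda^{(k)\top}$, reduces the inner product to
\[
\nabla f(x_k)^\top M_k d_k = \frac{\varepsilon_2 d}{n}\,\lambda^{(k)\top}\bar{d}^{(k)}.
\]
The key lemma is then $\lambda^{(k)\top}\bar{d}^{(k)} \le -\varepsilon_2/2$, which I verify in two cases. In the case \eqref{eq:d-bar1 in direction2 of RSG-NC} it is immediate from $-\mathbf{1}^\top\lambda^{(k)}\ge\varepsilon_2/2$. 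In case \eqref{eq:d-bar2 in direction2 of RSG-NC}, writing $A=\sum_{\lambda_j^{(k)}\le 0}(-\lambda_j^{(k)})$ and $B=\sum_{\lambda_j^{(k)}>0}\lambda_j^{(k)}$, a direct computation gives $\lambda^{(k)\top}\bar{d}^{(k)} = -A/2$, and $\min_i\lambda_i^{(k)}<-\varepsilon_2$ yields $A\ge\varepsilon_2$.

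For the quadratic term, Lemma~\ref{lemma:Johnson} gives $\|M_k d_k\|^2 \le \frac{1+\varepsilon}{n}\|d_k\|^2$ with probability at least $1-2\exp(-C_0\varepsilon^2 n)$, and expanding $\|d_k\|^2$ yields $\|d_k\|^2 = \frac{\varepsilon_2^2 d^2}{n^2}\bar{d}^{(k)\top}(G_k^\top M_k M_k^\top G_k)^{-1}\bar{d}^{(k)}$, which by \eqref{eq:eig_linear} is bounded by $\frac{\varepsilon_2^2 d}{(1-\varepsilon)\lambda_{\min}(G_k^\top G_k)}\|\bar{d}^{(k)}\|^2$ with probability at least $1-2\exp(-C_0\varepsilon^2 d)$. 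It remains to prove $\|\bar{d}^{(k)}\|^2\le|\mathcal{A}_k|$: in case \eqref{eq:d-bar1 in direction2 of RSG-NC} this is trivial, and in case \eqref{eq:d-bar2 in direction2 of RSG-NC} I will use the governing inequality $-\mathbf{1}^\top\lambda^{(k)}<\varepsilon_2/2$, i.e.\ $A-B<\varepsilon_2/2$, combined with $A\ge\varepsilon_2$, to deduce $B\ge A/2$ and hence $A/(2B)\le 1$, so that each entry of $\bar{d}^{(k)}$ has absolute value at most $1$.

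Plugging the two bounds into the $L$-smoothness inequality produces exactly the claimed estimate, and a union bound over the two probabilistic events gives the stated probability. The main technical nuisance will be the case analysis for $\bar{d}^{(k)}$ (especially checking that $B>0$, so that division is well defined, and that the coefficient $A/(2B)$ is bounded by $1$); once these elementary inequalities are in place, the rest of the argument parallels Proposition~\ref{Proposition:obj_dec_2} almost verbatim.
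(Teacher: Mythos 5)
Your proposal is correct and follows essentially the same route as the paper's proof: the identity $\nabla f(x_k)^\top M_k d_k = \frac{\varepsilon_2 d}{n}\lambda^{(k)\top}\bar{d}^{(k)}$ with the two-case bound $\lambda^{(k)\top}\bar{d}^{(k)}\le -\varepsilon_2/2$, the Johnson--Lindenstrauss and minimum-eigenvalue estimates for the quadratic term, and the entrywise bound $\|\bar{d}^{(k)}\|^2\le|\mathcal{A}_k|$ via $A/(2B)<1$. The only cosmetic difference is that you deduce $A/(2B)<1$ from $B>A/2$ rather than the paper's $\frac{\varepsilon_2}{4B}+\frac12<1$, which is an equivalent elementary manipulation.
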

\begin{proof}
	First, we show that $M_kd_k$ is a descent direction for both $\bar{d}^{(k)}$ defined by (\ref{eq:d-bar1 in direction2 of RSG-NC}) and by (\ref{eq:d-bar2 in direction2 of RSG-NC}). We have
	\begin{eqnarray}{\label{eq:Algorithm3 second descent direction}}
		&\fgrad^\top M_kd_k
		&= -\frac{\varepsilon_2 d}{n}\fgrad^\top M_kM_k^\top G_k\left(G_k^\top M_kM_k^\top G_k\right)^{-1}\bar{d}^{(k)} \notag\\
		&&=  \frac{\varepsilon_2 d}{n}\lambda^{(k)\top} \bar{d}^{(k)} 
	\end{eqnarray}
  using the definition \eqref{def_lambda_NC} of $\lambda^{(k)}$.
	When $-\mathbf{1}^\top \lambda^{(k)} \ge \frac{\varepsilon_2}{2}$ and $\bar{d}^{(k)} = \mathbf{1}$, i.e., (\ref{eq:d-bar1 in direction2 of RSG-NC}), (\ref{eq:Algorithm3 second descent direction}) gives
	\begin{equation}{\label{eq:Algorithm3 descent direction1}}
		\fgrad^\top M_kd_k \le -\frac{\varepsilon_2^2 d}{2n}.  
	\end{equation}
	Moreover, when $\bar{d}_i^{(k)} = \chi_{-}(\lambda_i^{(k)}) + \frac{\sum_{\lambda_j^{(k)} \le0 }-\lambda_j^{(k)}}{2\sum_{\lambda_j^{(k)}>0} \lambda_j^{(k)}}\chi_{+}(\lambda_i^{(k)} )$, i.e., (\ref{eq:d-bar2 in direction2 of RSG-NC}), we have
	\begin{alignat}{2}{\label{eq:Algorithm3 descent direction2}}
		\notag
		&\fgrad^\top M_kd_k&\\
		\notag
		&= \frac{\varepsilon_2 d}{n} \left(\sum_{\lambda_j^{(k)} \le 0} \lambda_j^{(k)} + \sum_{\lambda_j^{(k)} > 0} \lambda_j^{(k)}\frac{\sum_{\lambda_i^{(k)} \le 0 }-\lambda_i^{(k)}}{2\sum_{\lambda_i^{(k)}>0} \lambda_i^{(k)}}\right) \\
		\notag
		& = \frac{\varepsilon_2 d}{n} \left(\sum_{\lambda_j^{(k)} \le 0} \lambda_j^{(k)}  - \frac12\sum_{\lambda_j^{(k)} \le 0} \lambda_j^{(k)} \right)\\
		\notag
		&= \frac{\varepsilon_2 d}{2n}\sum_{\lambda_j^{(k)} \le 0} \lambda_j^{(k)} \\
		&< -\frac{\varepsilon^2_2 d}{2n}.
	\end{alignat}
	The first equality follows from (\ref{eq:Algorithm3 second descent direction}) and definition of $\bar{d}^{(k)}$. The last inequality follows from $\min_{i}\lambda_i^{(k)}<-\varepsilon_2$.
	Thus, in both cases, $M_kd_k$ is a descent direction.
	Next, we will evaluate the decrease $f(x_{k+1})- f(x_k)$. By using (\ref{eq:L-smooth}) from Assumption~\ref{Assumption:Objective}\ref{Objective is L-smooth} and
	$x_{k+1} - x_{k} = \alpha_k M_kd_k =-\alpha_k M_k M_k ^\top G_k y_k$ from (\ref{eq:iterate2 in RSG-NC}), we have
	\begin{eqnarray}{\label{eq:Algorithm3 L-smooth}}
		f(x_{k+1})\le f(x_k) + \alpha_k \fgrad^\top M_k d_k 
		+ \frac{L}{2}\alpha_k^2 \|M_kM_k^\top G_k y_k\|^2.
	\end{eqnarray}
	We apply Lemma~\ref{lemma:Johnson} to the last term and obtain 
	\begin{alignat*}{2}{\label{eq:Algorithm3 last term evaluate}}
		\notag
		\|M_kM_k^\top G_ky_k\|^2 &\le \frac{1+\varepsilon}{n}\|M_k^\top G_ky_k\|^2\\
		\notag
		&= \frac{\varepsilon_2^2d^2(1+\varepsilon)}{n^3} \bar{d}^{(k)\top} (G_k^\top M_k M_k^\top G_k)^{-1} \bar{d}^{(k)} \\
		&\le \frac{\varepsilon_2^2d^2(1+\varepsilon)}{n^3\lambda_{\mathrm{min}}{(G_k^\top M_k M_k^\top G_k)}}\|\bar{d}^{(k)}\|^2.
	\end{alignat*}
	The first inequality follows from Lemma~\ref{lemma:Johnson} with $M_k = \frac{1}{n}P_k^\top$ and the first equality follows from definition of $y_k$ in (\ref{def:yk}).
	Furthermore, we evaluated $\lambda_{\mathrm{min}}(G_k^\top M_k M_k^\top G_k)$ in (\ref{eq:eig_linear}) with probability at least $1-2\exp{(-C_0\varepsilon^2d)}$.
	Then, we have
	\begin{equation}{\label{eq:norm of MMGy}}
		\|M_kM_k^\top G_ky_k\|^2 
		\le \frac{\varepsilon_2^2d(1+\varepsilon)}{n(1-\varepsilon)\lambda_{\mathrm{min}}{(G_k^\top  G_k)}}\|\bar{d}^{(k)}\|^2.
	\end{equation}
	We can compute an upper bound for $\|\bar{d}^{(k)}\|$, where $\bar{d}^{(k)}$ is defined by (\ref{eq:d-bar1 in direction2 of RSG-NC}) or (\ref{eq:d-bar2 in direction2 of RSG-NC}). When $-\mathbf{1}^\top\lambda^{(k)} \geq \frac{\varepsilon_2}{2}$ and $\bar{d}^{k} = \bm{1}$ from (\ref{eq:d-bar1 in direction2 of RSG-NC}), it is clear that $\|\bar{d}^{(k)}\|  = \|\mathbf{1}\|= \sqrt{\vert\mathcal{A}_k\vert}<\sqrt{d}$ (by Assumption~\ref{Assumption:existence of inverse}\ref{Assumption:reduced dimension is larger than active sets}).
	When $-\mathbf{1}^\top \lambda^{(k)} < \frac{\varepsilon_2}{2}$ and $\bar{d}^{(k)}$ is defined by (\ref{eq:d-bar2 in direction2 of RSG-NC}), we have
	\begin{equation}{\label{eq:lambda eq1}}
		-\sum_{\lambda^{(k)}_i\le 0} \lambda^{(k)}_i < \sum_{\lambda^{(k)}_i> 0} \lambda^{(k)}_i + \frac{\varepsilon_2}{2},
	\end{equation}
	and since $\min_{i}\lambda_i^{(k)}< - \varepsilon_2<0$, we have
	\begin{equation}{\label{eq:lambda eq2}}
		\varepsilon_2 < -\sum_{\lambda^{(k)}_i\le0} \lambda^{(k)}_i.
	\end{equation}
	From (\ref{eq:lambda eq1}) and (\ref{eq:lambda eq2}), we find that
	\begin{equation}{\label{eq:lambda eq3}}
		\frac{\varepsilon_2}{2} < \sum_{\lambda^{(k)}_i> 0} \lambda^{(k)}_i.
	\end{equation}
	These relations leads us to 
	\begin{alignat*}{2}{\label{eq:upper bound of d2}}
		\notag
		\frac{\sum_{\lambda^{(k)}_j \le 0 }-\lambda^{(k)}_j}{2\sum_{\lambda^{(k)}_j>0} \lambda^{(k)}_j}
		&< \frac{\varepsilon_2}{4\sum_{\lambda^{(k)}_j>0} \lambda_j^{(k)}} + \frac{1}{2} \\
		&< \frac{1}{2} + \frac{1}{2}  = 1.
	\end{alignat*}
	The first inequality follows from (\ref{eq:lambda eq1}) and the last inequality follows from (\ref{eq:lambda eq3}).
	Accordingly, we have $\|\bar{d}^{(k)}\| = \sqrt{\sum_{\lambda^{(k)}_j \le 0} 1 + \sum_{\lambda^{(k)}_j >0} \left(\frac{\sum_{\lambda^{(k)}_j <0 }-\lambda^{(k)}_j}{2\sum_{\lambda^{(k)}_j>0} \lambda^{(k)}_j}\right)^2}\le \sqrt{\vert\mathcal{A}_k\vert}$. 
	Thus, $\|\bar{d}^{(k)}\| \le \sqrt{\vert \mathcal{A}_k \vert}$ holds when $\bar{d}^{(k)}$ is defined by (\ref{eq:d-bar1 in direction2 of RSG-NC}) and by (\ref{eq:d-bar2 in direction2 of RSG-NC}).
	Combining (\ref{eq:Algorithm3 descent direction1}), (\ref{eq:Algorithm3 descent direction2}), (\ref{eq:Algorithm3 L-smooth}), (\ref{eq:norm of MMGy}) and $\|\bar{d}^{(k)}\|\le \sqrt{\vert\mathcal{A}_k\vert}$, we obtain the following lower bound of the step size,
	\begin{alignat*}{2}
		\notag
		f(x_{k+1})- f(x_k)
		&\le - \alpha_k \frac{\varepsilon_2^2 d}{2n} + \frac{L}{2}\alpha_k^2 \|M_kM_k^\top G_ky_k\|^2\\
		\notag
		&\le - \alpha_k \frac{\varepsilon_2^2 d}{2n} + \frac{L}{2}\alpha_k^2 \frac{(1+\varepsilon)\varepsilon_2^2d}{n(1-\varepsilon)\lambda_{\mathrm{min}}{(G_k^\top  G_k)}}\|\bar{d}^{(k)}\|^2 \\
		&\le -\alpha_k\left(1 - \frac{(1+\varepsilon)L\alpha_k\vert\mathcal{A}_k\vert}{(1-\varepsilon)\lambda_{\mathrm{min}}(G_k^\top G_k)}\right)\frac{\varepsilon_2^2d}{2n}.
	\end{alignat*}
	The first inequality follows from (\ref{eq:Algorithm3 L-smooth}) and, (\ref{eq:Algorithm3 descent direction1}) or (\ref{eq:Algorithm3 descent direction2}). The second inequality follows from (\ref{eq:norm of MMGy}) and the last inequality from $\|\bar{d}^{(k)}\|\le \sqrt{\vert\mathcal{A}_k\vert}$.
\end{proof}
\subsection{Feasibility}
\label{subsection:Feasibility of RSG-NC}
Here, by utilizing the $L_g$-smoothness of the constraints from Assumption~\ref{Assumption:Constraints}\ref{Assumption:const_Lg-smooth}, we derive conditions on the step size so that the sequence $\{x_k\}$ generated by Algorithm~\ref{Algorithm:Third Algorithm} is feasible.

\begin{proposition}{\label{proposition:feasibility in algorithm3 1}}
	Let $\mu_k = \frac{1}{2\sqrt{s_k^\top (G_k^\top M_k M_k^\top G_k)^{-1} s_k}}$ and assume that $x_k$ is feasible. Furthermore, suppose that Assumptions~\ref{Assumption:Objective}\ref{Level Set is compact or feasible set is compact}, \ref{Assumption:existence of inverse}, \ref{Assumption:Constraints}, and \ref{Assumption:maximum of Constraints} hold. 
	Then, if the step size $\alpha_k$ satisfies
	\[
	0 \le \alpha_k \le \min\left(
	\frac{\sqrt{\lambda_{\mathrm{min}}^*} l_g^{\varepsilon_0}(1-\varepsilon)}{3U_fU_gL_g(1+\varepsilon)^2}\frac{n}{\sqrt{d}},
	\frac{1}{U_f}\sqrt{\frac{n^3}{2d(1+\varepsilon)^2}}\frac{-2u_g^{\varepsilon_0}}{U_g+\sqrt{U_g^2 - 2L_gg_*}}
	\right),
	\]
	when $d_k =- M_kR_kM_k^\top \fgrad $ of (\ref{eq:direction1 in RSG-NC}) in Algorithm~\ref{Algorithm:Third Algorithm}, $x_{k+1}$ is feasible with probability at least $1-2\exp{(-C_0\varepsilon^2n)}-(2\vert\mathcal{A}_k\vert+6)\exp{(-C_0\varepsilon^2d)}$.
\end{proposition}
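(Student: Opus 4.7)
I would start from the $L_g$-smoothness Taylor-type bound
\[
g_i(x_{k+1}) \leq g_i(x_k) + \alpha_k \nabla g_i(x_k)^\top M_k d_k + \tfrac{L_g\alpha_k^2}{2}\|M_k d_k\|^2,
\]
and split into two cases according to whether $i\in\mathcal{A}_k$ or $i\notin\mathcal{A}_k$; each case will contribute one of the two arguments of the $\min$ in the claimed step-size bound.

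For active $i$, the key step is to expand the normal equation (\ref{eq:lambda in RSG-NC}) defining $\bar{\lambda}^{(k)}$, using $M_k d_k = -M_kR_kM_k^\top\fgrad$, to obtain $G_k^\top M_k d_k = \mu_k\,s_k\,\fgrad^\top M_k d_k/\|M_k^\top\fgrad\|$, which in the $i$-th component reads
\[
\nabla g_i(x_k)^\top M_k d_k = -\mu_k\frac{\|M_k^\top\nabla g_i(x_k)\|}{\|M_k^\top\fgrad\|}\,\fgrad^\top M_k R_k M_k^\top\fgrad \leq 0.
\]
Together with $g_i(x_k)\leq 0$, this reduces the feasibility requirement $g_i(x_{k+1})\leq 0$ to forcing the (negative) first-order term to dominate the $L_g$-quadratic term. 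The inequalities (\ref{eq:eval_1}) and (\ref{eq:eval_2}) from Lemma~\ref{lemma:relathonship berweeen Z_k and R_k}, together with Lemma~\ref{lemma:Johnson} estimates on $\|M_k d_k\|$, $\|M_k^\top\nabla g_i(x_k)\|$, and $\|M_k^\top\fgrad\|$, let the common factor $\|Z_kM_k^\top\fgrad\|^2$ cancel. A lower bound $\mu_k \geq \tfrac{1}{2U_g}\sqrt{(1-\varepsilon)\lambda_{\min}^*/((1+\varepsilon)d)}$, derived from $\|s_k\|^2 \leq (1+\varepsilon)|\mathcal{A}_k|U_g^2 d/n^2$, the eigenvalue bound (\ref{eq:eig_linear}), and $|\mathcal{A}_k|\leq d$, and the inequality $\|\nabla g_i(x_k)\|\geq l_g^{\varepsilon_0}$ from Assumption~\ref{Assumption:maximum of Constraints}, then yield exactly the first factor of the step-size bound.

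For non-active $i$, the definition of $\mathcal{A}_k$ together with Assumption~\ref{Assumption:maximum of Constraints} gives $g_i(x_k)\leq u_g^{\varepsilon_0}<0$. Cauchy-Schwarz and the same chain of Lemma~\ref{lemma:Johnson} estimates yield $\|M_k d_k\|\leq \sqrt{2d(1+\varepsilon)^2/n^3}\,U_f$, and the required $g_i(x_{k+1})\leq 0$ reduces to the quadratic
\[
\tfrac{L_g}{2}(\alpha_k\|M_k d_k\|)^2 + U_g(\alpha_k\|M_k d_k\|) + u_g^{\varepsilon_0} \leq 0.
\]
Its smaller positive root, rationalized and then weakened via $u_g^{\varepsilon_0}\geq g_*$ to unify the radical, gives $\alpha_k\|M_k d_k\| \leq -2u_g^{\varepsilon_0}/(U_g+\sqrt{U_g^2-2L_gg_*})$; substituting the upper bound on $\|M_k d_k\|$ yields the second factor.

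The failure probability is obtained by a union bound over the Lemma~\ref{lemma:Johnson} events used: one on an $\mathbb{R}^d$-vector for $\|M_k d_k\|$, producing the $2\exp(-C_0\varepsilon^2 n)$ term, one per $i\in\mathcal{A}_k$ for $\|M_k^\top\nabla g_i\|$, and a constant number for $\|M_k^\top\fgrad\|$ and for (\ref{eq:eig_linear}), which sum to the stated tail. I expect the main obstacle to be the active case: unlike in the linear-constraint setting, where the search direction exactly preserves the active $g_i$, here one must engineer a first-order inward component of magnitude $\Theta(\mu_k\|M_k^\top\nabla g_i\|/\|M_k^\top\fgrad\|\cdot\|Z_kM_k^\top\fgrad\|^2)$ that is large enough to absorb the $L_g$ curvature term uniformly close to the boundary, and it is precisely the two-sided bound of Lemma~\ref{lemma:relathonship berweeen Z_k and R_k} together with the $\mu_k$-calibration that makes this balance quantitative.
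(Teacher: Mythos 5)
Your proposal is correct and follows essentially the same route as the paper's proof: the identity $G_k^\top M_k R_k M_k^\top \nabla f(x_k) = \mu_k s_k\,\nabla f(x_k)^\top M_k R_k M_k^\top \nabla f(x_k)/\|M_k^\top \nabla f(x_k)\|$ from the normal equation, the two-sided bounds of Lemma~\ref{lemma:relathonship berweeen Z_k and R_k}, the lower bound on $\mu_k$ via \eqref{eq:lower2}, and the quadratic-root argument with $u_g^{\varepsilon_0}$ and $g_*$ for the non-active constraints all match the paper, as does the union-bound accounting of the failure probability. No gaps.
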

\begin{proof}
	First, let us consider the active constraints $g_i$ $(i\in \mathcal{A}_k)$.
	Since $g_i$ are $L_g$-smooth from Assumption~\ref{Assumption:Constraints}\ref{Assumption:const_Lg-smooth}, we have
	\begin{equation}
		\label{eq:Lg-smooth of constraints}
		g_i(x_{k+1}) \le g_i(x_k) + \nabla g_i(x_k)^\top (x_{k+1} - x_{k}) + \frac{L_g}{2}\|x_{k+1}- x_k\|^2.
	\end{equation}
	From (\ref{eq:Lg-smooth of constraints}) and $x_{k+1} - x_{k} = -\alpha_k M_k R_k M_k^\top \fgrad $ of (\ref{eq:iterate1 in RSG-NC}), we have
	\begin{alignat}{2}
		\notag
		&g_i(x_{k+1})\le g_i(x_k) - \alpha_k \nabla g_i(x_k)^\top M_kR_k M_k^\top \fgrad +\frac{\alpha_k^2L_g}{2} \|M_kR_k M_k^\top \fgrad\|^2. &\\
\label{Lsmooth_g}
	\end{alignat}
	Note that $\bar{\lambda}^{(k)}$ of \eqref{eq:lambda in RSG-NC} is the solution of 
	\begin{align*}
			\left(G_k - \mu_k \frac{\fgrad s_k^\top}{\|M_k^\top \fgrad\|}\right)^\top M_k (M_k^\top \fgrad + M_k^\top G_k\bar{\lambda}^{(k)}) = 0.
	\end{align*}
Recalling that
\begin{align*}
	R'_k := & M_k^\top G_k\left(\left(G_k-\mu_k\frac{\fgrad}{\|M_k^\top \fgrad\|}s_k^\top \right)^\top M_kM_k^\top G_k \right)^{-1} \\
	&  \hspace{100pt} \left(G_k-\mu_k\frac{\fgrad}{\|M_k^\top \fgrad\|}s_k^\top\right)^\top M_k, \notag \\
	R_k:=& I -R'_k, 
\end{align*}
	we deduce that $R_k M_k^\top \fgrad = M_k^\top \fgrad + M_k^\top G_k \bar{\lambda}^{(k)}$; thus,
	\[
		G_k^\top M_k R_k M_k^\top \fgrad = \mu_k \frac{s_k}{\|M_k^\top \fgrad \|} \fgrad^\top M_k R_k M_k^\top \fgrad,
	\]
	which is equivalent to  
	\begin{equation}
		\label{eq:cond_updaterule}
		\nabla g_i(x_k)^\top M_k R_k M_k^\top \fgrad = \mu_k \frac{\| M_k^\top \nabla g_i(x_k)\|}{\| M_k^\top \fgrad \|} \fgrad^\top M_k R_k M_k^\top \fgrad,
	\end{equation}
	for all $i \in \mathcal{A}_k$. From this equation and \eqref{Lsmooth_g},
        we obtain that for all $i \in \mathcal{A}_k$,
	\begin{alignat*}{2}
	\notag
	 & g_i(x_{k+1})\\ 
	\notag
		&\le g_i(x_k)- \mu_k\alpha_k \frac{\|M_k^\top \nabla g_i(x_k)\|}{\|M_k^\top \nabla f(x_k)\|}\fgrad^\top M_k R_k M_k^\top \fgrad +\frac{\alpha_k^2L_g(1+\varepsilon)}{2n}\| R_k M_k^\top \fgrad\|^2 \\
		&\le g_i(x_k)- \frac23\mu_k\alpha_k \frac{\|M_k^\top \nabla g_i(x_k)\|}{\|M_k^\top \nabla f(x_k)\|}\|Z_k M_k^\top \fgrad\|^2 +\frac{\alpha_k^2L_g(1+\varepsilon)}{n}\|Z_k M_k^\top \fgrad\|^2.
	\end{alignat*}
	The first inequality follows from (\ref{eq:cond_updaterule}) and Lemma~\ref{lemma:Johnson}. The last inequality follows from (\ref{eq:eval_1}) and (\ref{eq:eval_2}).
	Hence, if the step size $\alpha_k$ satisfies 
	\begin{equation}{\label{eq:feasibility direction1 Ik}}
		0\le \alpha_k \le \mu_k\frac{\|M_k^\top \nabla g_i(x_k)\|}{\|M_k^\top \nabla f(x_k)\|} \frac{2n}{3L_g(1+\varepsilon)},
	\end{equation}
	$g_i(x_{k+1})\le 0$ holds for all $i \in \mathcal{A}_k$. We now show that a nonzero lower bound of $\alpha_k$  exists by computing 
        a lower bound for $\frac{\|M_k^\top \nabla g_i(x_k)\|}{\|M_k^\top \nabla f(x_k)\|}$. 
	We apply Lemma~\ref{lemma:Johnson} to $\fgrad$ and $\nabla g_i(x_k)\;(i\in \mathcal{A}_k)$; from Assumptions~\ref{Assumption:Objective} and {\ref{Assumption:minimum of Constraints gradient}},
	it follows that
	\begin{equation}{\label{eq:lower1}}
		\frac{\|M_k^\top \nabla g_i(x_k)\|}{\|M_k^\top \fgrad\|}
		\ge \sqrt{\frac{(1-\varepsilon)}{(1+\varepsilon)}}\frac{\|\nabla g_i(x_k)\|}{\|\fgrad\|}
		\ge \sqrt{\frac{(1-\varepsilon)}{(1+\varepsilon)}} \frac{l_g^{\varepsilon_0}}{U_f}\qquad (i \in \mathcal{A}_k)
	\end{equation}
	holds with probability at least $1-2(\vert\mathcal{A}_k\vert+1)\exp{(-C_0\varepsilon^2d)}$. This inequality shows that $\frac{\|M_k^\top \nabla g_i(x_k)\|}{\|M_k^\top \fgrad\|}$ has a nonzero lower bound. Next we find a lower bound of $\mu_k = \frac{1}{2\sqrt{s_k^\top (G_k^\top M_k M_k^\top G_k)^{-1} s_k}}$. 
	First, we compute an upper bound for $s_k^\top (G_k^\top M_k M_k^\top G_k)^{-1} s_k$. We apply Lemma~\ref{lemma:Johnson} to $\nabla g_i(x_k)\;(i\in \mathcal{A}_k)$ with $M_k = \frac{1}{n} P_k^\top $; from (\ref{eq:eig_linear}), we find that
	\begin{alignat}{2}
		\notag
		s_k^\top (G_k^\top M_k M_k^\top G_k)^{-1} s_k&
		\le \frac{\|s_k\|^2}{\lambda_{\mathrm{min}}(G_k^\top M_k M_k^\top G_k)} &\\
		\notag
		&\le \frac{n^2}{d(1-\varepsilon)\lambda_{\mathrm{min}}(G_k^\top G_k)} \sum_{i \in \mathcal{A}_k}\|M_k^\top \nabla g_i(x_k)\|^2\\
		\notag
		&\le \frac{(1+\varepsilon)}{(1-\varepsilon)\lambda_{\mathrm{min}}(G_k^\top G_k)} \sum_{i \in \mathcal{A}_k}\|\nabla g_i(x_k)\|^2 \\
		\notag
		&< \frac{\vert \mathcal{A}_k\vert U_g^2(1+\varepsilon)}{(1-\varepsilon)\lambda_{\mathrm{min}}(G_k^\top G_k)} \le \frac{dU_g^2(1+\varepsilon)}{(1-\varepsilon)\lambda_{\mathrm{min}}(G_k^\top G_k)}\\
		{\label{eq:lower2}}
		&\le \frac{dU_g^2(1+\varepsilon)}{(1-\varepsilon)\lambda_{\mathrm{min}}^*}
	\end{alignat}
	holds. The second inequality follows from (\ref{eq:eig_linear}) and the third inequality follows from Lemma~\ref{lemma:Johnson}.
	The 4th and 5th inequalities come from Assumptions~\ref{Assumption:existence of inverse}\ref{Assumption:reduced dimension is larger than active sets} and \ref{Assumption:Constraints}.
	Inequality 	\eqref{eq:lower2} implies that $\mu_k$ has a lower bound. Hence, upon combining (\ref{eq:feasibility direction1 Ik}), (\ref{eq:lower1}) and (\ref{eq:lower2}),
	we see that if the step size $\alpha_k$ satisfies
	\begin{equation*}{\label{eq:step size lower bound1 with first direction}}
		0\le \alpha_k \le \frac{\sqrt{\lambda_{\mathrm{min}}^*} l_g^{\varepsilon_0}(1-\varepsilon)}{3U_fU_gL_g(1+\varepsilon)^2}\frac{n}{\sqrt{d}},
	\end{equation*}
	then $g_i(x_k)\le 0 \;(i\in \mathcal{A}_k)$ holds.
	
	As for the nonactive constraints (i.e., $i\notin \mathcal{A}_k$), from the $L_g$-smoothness of constraint functions (\ref{eq:Lg-smooth of constraints}), we have that for all $i \notin \mathcal{A}_k$,
	\[
	g_i(x_{k+1})\le g_i(x_k) + \alpha_k \| \nabla g_i(x_k)\|\| M_kR_kM_k^\top \nabla f(x_k)\| +\frac{\alpha_k^2L_g}{2}\|M_kR_kM_k^\top \nabla f(x_k)\|^2.
	\]
	In solving the quadratic inequality,
	\[
		g_i(x_k) +  \| \nabla g_i(x_k)\|z +\frac{L_g}{2}z^2 \le 0
	\]
	with $z = \alpha_k \| M_kR_kM_k^\top \nabla f(x_k)\|$, we find that
	if the step size $\alpha_k$ satisfies 
	\begin{equation*}
		0\le \alpha_k \le \frac{1}{L_g\|M_kR_kM_k^\top \nabla f(x_k)\|}\frac{-2L_g g_i(x_k)}{\|\nabla g_i(x_k)\| + \sqrt{\|\nabla g_i(x_k)\|^2 - 2L_g g_i(x_k)}},
	\end{equation*}
	then $g_i(x_{k+1})\le 0 \; (i \notin \mathcal{A}_k)$.
	Assumptions~\ref{Assumption:Objective}\ref{Level Set is compact or feasible set is compact}, \ref{Assumption:Constraints}, and \ref{Assumption:maximum of Constraints} yield
	$\|\nabla g_i\|\le U_g,-\infty < g_*\le g_i\le u_g^{\varepsilon_0}<0$. 
	From these relations, we find that $x_{k+1}$ is feasible if the step size $\alpha_k$ satisfies
	\begin{equation}{\label{eq:feasibility direction1 not Ik}}
		0\le \alpha_k \le \frac{1}{\|M_kR_kM_k^\top \nabla f(x_k)\|} \frac{-2u_g^{\varepsilon_0}}{U_g+\sqrt{U_g^2 - 2L_gg_*}}.
	\end{equation}
	From Lemma~\ref{lemma:Johnson} and (\ref{eq:eval_2}),
	\begin{alignat}{2}
		\notag
		\|M_k R_k M_k^\top \fgrad\|^2 &
		\le \frac{1+\varepsilon}{n}\|R_kM_k^\top \fgrad\|^2\\
		\notag
		&\le \frac{2(1+\varepsilon)}{n}\|Z_k M_k^\top \fgrad\|^2\\ 
		{\label{eq:lower3}}
		\notag
		&\le \frac{2(1+\varepsilon)}{n}\|M_k^\top \fgrad\|^2\\
		&\le \frac{2d(1+\varepsilon)^2}{n^3}U_f^2
	\end{alignat}
	hold with probability at least $1-2\exp{(-C_0\varepsilon^2n)}-2\exp{(-C_0\varepsilon^2d)}$. 
	The first inequality follows from Lemma~\ref{lemma:Johnson} with $M_k = \frac{1}{n}P_k^\top$ and the second inequality follows from (\ref{eq:eval_2}).
	The third inequality follows from (\ref{eq:orthogonal projection norm}) and the last inequality follows from Lemma~\ref{lemma:Johnson} with $M_k^\top = \frac{1}{n}P_k$ \; and \;Assumption~\ref{Assumption:Objective}\ref{Level Set is compact or feasible set is compact}.
	(\ref{eq:feasibility direction1 not Ik}) and (\ref{eq:lower3}) together yield 
	\begin{equation*}{\label{eq:step size lower bound2 with first direction}}
		0\le \alpha_k \le \frac{1}{U_f}\sqrt{\frac{n^3}{2d(1+\varepsilon)^2}}\frac{-2u_g^{\varepsilon_0}}{U_g+\sqrt{U_g^2 - 2L_gg_*}}.   
	\end{equation*}
\end{proof}
	The upper bound of the step size $\alpha_k$ in Proposition~\ref{proposition:feasibility in algorithm3 1} consists of two terms, an $O(n/d^{1/2})$ term and an $O(n^{3/2}/d^{1/2})$ term.
	When the original dimension $n$ is large enough, the $O(n^{3/2}/d^{1/2})$ term  becomes larger than the $O(n/d^{1/2})$ term. Accordingly, the step size condition becomes
	\[
		0\le \alpha_k \le \frac{\sqrt{\lambda_{\mathrm{min}}^*} l_g^{\varepsilon_0}(1-\varepsilon)}{3U_fU_gL_g(1+\varepsilon)^2}\frac{n}{\sqrt{d}}.
	\]

  Next, we prove that there exists a non-zero lower bound for the step size in the second direction $d_k = -M_k^\top G_ky_k$. 
\begin{proposition}{\label{proposition:feasibility in algorithm3 2}}
	Suppose that Assumptions~\ref{Assumption:Objective}\ref{Level Set is compact or feasible set is compact},\ref{Assumption:existence of inverse}, \ref{Assumption:Constraints}, and \ref{Assumption:maximum of Constraints} hold and that  $x_k$ is feasible and $\min_i \lambda_i^{(k)} < -\varepsilon_2$. 
	Then, if the step size $\alpha_k$ satisfies
	\begin{align*}
			&0\le 
	\alpha_k \le \min\left(\frac{2(1-\varepsilon)\lambda_{\mathrm{min}}^*}{\varepsilon_2(1+\varepsilon)L_gd},
	\frac{1}{U_fL_g}\sqrt{\frac{(\lambda_{\mathrm{min}}^*)^3(1-\varepsilon)^3}{d^3(1+\varepsilon)^3}},\right.\\
	&\hspace{100pt}\left.
	\sqrt{\frac{(1-\varepsilon)\lambda_{\mathrm{min}}^*}{1+\varepsilon}}\frac{\sqrt{n}}{d\varepsilon_2}\frac{-2u_g^{\varepsilon_0}}{U_g+\sqrt{U_g^2 - 2L_gg_*}}
	\right),
	\end{align*}
	when $d_k = -\frac{\varepsilon_2d}{n} M_k^\top G_k(G_k^\top M_k M_k^\top G_k)^{-1}\bar{d}^{(k)}$ of (\ref{eq:direction2 in RSG-NC}) with $\bar{d}^{(k)}$ defined by (\ref{eq:d-bar1 in direction2 of RSG-NC}) or (\ref{eq:d-bar2 in direction2 of RSG-NC}) in Algorithm~\ref{Algorithm:Third Algorithm}, 
	$x_{k+1}$ is feasible with probability at least $1-2\exp{(-C_0\varepsilon^2n)}-6\exp{(-C_0\varepsilon^2d)}$.
\end{proposition}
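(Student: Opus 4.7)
The plan is to follow the template of Propositions~\ref{proposition:Feasibility with linear inequality2} and~\ref{proposition:feasibility in algorithm3 1}: apply the $L_g$-smoothness inequality
\[
g_i(x_{k+1}) \le g_i(x_k) + \alpha_k \nabla g_i(x_k)^\top M_k d_k + \frac{L_g \alpha_k^2}{2}\|M_k d_k\|^2
\]
to each constraint, split on whether $i \in \mathcal{A}_k$ or not, and derive a sufficient condition on $\alpha_k$ under which $g_i(x_{k+1}) \le 0$. The pivotal uniform estimate is a bound on $\|M_k d_k\|^2$: one application of Lemma~\ref{lemma:Johnson} with dimension $n$ gives $\|M_k d_k\|^2 \le \frac{1+\varepsilon}{n}\|d_k\|^2$, and then plugging in~\eqref{eq:direction2 in RSG-NC}, the eigenvalue estimate~\eqref{eq:eig_linear}, and $\|\bar{d}^{(k)}\|^2 \le \vert\mathcal{A}_k\vert \le d$ from the proof of Proposition~\ref{Proposition:dec_obj_extalgo_2} yields $\|M_k d_k\|^2 \le \frac{\varepsilon_2^2 d(1+\varepsilon)\vert\mathcal{A}_k\vert}{n(1-\varepsilon)\lambda_{\mathrm{min}}^*}$.

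For an active index $i \in \mathcal{A}_k$, the identity $G_k^\top M_k d_k = -\frac{\varepsilon_2 d}{n}\bar{d}^{(k)}$ together with the componentwise non-negativity of $\bar{d}^{(k)}$ (verified in the proof of Proposition~\ref{Proposition:dec_obj_extalgo_2} for both~\eqref{eq:d-bar1 in direction2 of RSG-NC} and~\eqref{eq:d-bar2 in direction2 of RSG-NC}) gives $\nabla g_i(x_k)^\top M_k d_k = -\frac{\varepsilon_2 d}{n}\bar{d}^{(k)}_i \le 0$. Since $g_i(x_k) \le 0$ by feasibility of $x_k$, a sufficient condition for $g_i(x_{k+1}) \le 0$ is $\alpha_k \le \frac{2\varepsilon_2 d\,\bar{d}^{(k)}_i}{n L_g \|M_k d_k\|^2}$, into which I substitute the uniform bound on $\|M_k d_k\|^2$ to obtain $\alpha_k \le \frac{2\bar{d}^{(k)}_i (1-\varepsilon)\lambda_{\mathrm{min}}^*}{\varepsilon_2(1+\varepsilon)L_g d}$. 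Whenever $\lambda_i^{(k)} \le 0$ (the entirety of case~\eqref{eq:d-bar1 in direction2 of RSG-NC}, and those coordinates of~\eqref{eq:d-bar2 in direction2 of RSG-NC} with $\lambda_i^{(k)}\le 0$) we have $\bar{d}^{(k)}_i = 1$, which yields the first term $\frac{2(1-\varepsilon)\lambda_{\mathrm{min}}^*}{\varepsilon_2(1+\varepsilon)L_g d}$ of the statement.

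The main technical obstacle is the complementary subcase in~\eqref{eq:d-bar2 in direction2 of RSG-NC} with $\lambda_i^{(k)} > 0$, where $\bar{d}^{(k)}_i$ equals the scalar $c := \frac{\sum_{\lambda_j^{(k)} \le 0}(-\lambda_j^{(k)})}{2\sum_{\lambda_j^{(k)}>0}\lambda_j^{(k)}}$, which a priori can be arbitrarily small. I would extract a positive lower bound on $c$ by noting that the numerator exceeds $\varepsilon_2$ via~\eqref{eq:lambda eq2} and that the denominator is at most $\sqrt{\vert\mathcal{A}_k\vert}\,\|\lambda^{(k)}\|$ by Cauchy--Schwarz; applying the bound~\eqref{eq:upper bound of lambda} on $\|\lambda^{(k)}\|$ together with $\vert\mathcal{A}_k\vert\le d$ gives $c \ge \frac{\varepsilon_2}{2U_f}\sqrt{\frac{\lambda_{\mathrm{min}}^*(1-\varepsilon)}{d(1+\varepsilon)}}$. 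Substituting this lower bound for $\bar{d}^{(k)}_i$ into the previous step-size inequality produces exactly the second term $\frac{1}{U_f L_g}\sqrt{(\lambda_{\mathrm{min}}^*)^3(1-\varepsilon)^3/(d^3(1+\varepsilon)^3)}$.

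For a non-active index $i \notin \mathcal{A}_k$, I would fall back on the crude bound $\nabla g_i(x_k)^\top M_k d_k \le \|\nabla g_i(x_k)\|\|M_k d_k\|$, which reduces $g_i(x_{k+1})\le 0$ to the quadratic inequality $g_i(x_k) + \|\nabla g_i(x_k)\|z + \frac{L_g}{2}z^2 \le 0$ in $z = \alpha_k\|M_k d_k\|$, exactly as in Proposition~\ref{proposition:feasibility in algorithm3 1}. Solving this with $\|\nabla g_i\|\le U_g$, $g_i(x_k)\le u_g^{\varepsilon_0}<0$ from Assumption~\ref{Assumption:maximum of Constraints}, and $g_*\le g_i$ to upper-bound the square-root term, and then substituting the uniform upper bound on $\|M_k d_k\|$, yields the third term. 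Finally, taking the minimum of the three sufficient conditions and accounting for a union bound over the applications of Lemma~\ref{lemma:Johnson}---one with dimension $n$ (for $\|M_k d_k\|^2$), one with dimension $d$ (for~\eqref{eq:eig_linear}), and two further ones with dimension $d$ concealed in the derivation of~\eqref{eq:upper bound of lambda}---gives the probability estimate $1-2\exp(-C_0\varepsilon^2 n)-6\exp(-C_0\varepsilon^2 d)$.
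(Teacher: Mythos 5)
Your proposal is correct and follows essentially the same route as the paper's proof: the $L_g$-smoothness expansion, the identity $G_k^\top M_k d_k = -\tfrac{\varepsilon_2 d}{n}\bar{d}^{(k)}$ for the active constraints, the bound $\|\bar d^{(k)}\|^2\le\vert\mathcal{A}_k\vert\le d$ combined with \eqref{eq:norm of MMGy} and \eqref{eq:eig_linear}, the lower bound on the positive-$\lambda_i^{(k)}$ coordinates via \eqref{eq:lambda eq2} and \eqref{eq:upper bound of lambda}, and the quadratic inequality with $u_g^{\varepsilon_0}$, $g_*$, $U_g$ for the non-active constraints. The only difference is cosmetic: you bound $\|M_kd_k\|^2$ uniformly first and then lower-bound $\bar d^{(k)}_i$ separately, whereas the paper keeps the ratio $\bar d^{(k)}_i/\|\bar d^{(k)}\|^2$ together; the resulting three thresholds and the probability count are identical.
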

\begin{proof}
	Regarding the active constraints, from the $L_g$-smoothness of $g_i$ to (\ref{eq:Lg-smooth of constraints}) and $x_{k+1}- x_{k} = -\alpha_k M_k M_k^\top G_k y_k$ of (\ref{eq:iterate2 in RSG-NC}), we have
	\[
		g_i(x_{k+1}) \le g_i(x_k) -\alpha_k \nabla g_i(x_k)^\top M_kM_k^\top G_ky_k +\frac{L_g}{2}\alpha_k^2 \|M_kM_k^\top G_k y_k \|^2,
	\]
        which leads to
	\begin{alignat}{2}
		\notag
		g_i(x_{k+1}) & \le   g_i(x_k) -\alpha_k \nabla g_i(x_k)^\top M_kM_k^\top G_ky_k +\frac{L_g}{2}\alpha_k^2 \frac{\varepsilon_2^2 d(1+\varepsilon)}{n(1-\varepsilon)\lambda_{\mathrm{min}}(G_k^\top G_k)}\|\bar{d}^{(k)}\|^2 \\
		\notag
		& = g_i(x_k) - \alpha_k \frac{\varepsilon_2d}{n}\bar{d}_i^{(k)} +\frac{L_g}{2}\alpha_k^2 \frac{\varepsilon_2^2 d(1+\varepsilon)}{n(1-\varepsilon)\lambda_{\mathrm{min}}(G_k^\top G_k)}\|\bar{d}^{(k)}\|^2 \\
		\notag
		& = g_i(x_k) - \alpha_k \frac{\varepsilon_2d}{n}\bar{d}_i^{(k)} +\frac{L_g}{2}\alpha_k^2 \frac{\varepsilon_2^2 d(1+\varepsilon)}{n(1-\varepsilon)\lambda_{\mathrm{min}}^*}\|\bar{d}^{(k)}\|^2,
	\end{alignat}
with probability at least $1-2\exp{(-C_0\varepsilon^2n)}-2\exp{(-C_0\varepsilon^2d)}$.
The first inequality follows from (\ref{eq:norm of MMGy}) and the first equality follows from definition (\ref{def:yk}) of $y_k$.
Hence, if the step size satisfies $-\alpha_k \bar{d}^{(k)}_i +\frac{L_g}{2}\alpha_k^2 \frac{\varepsilon_2(1+\varepsilon)}{(1-\varepsilon)\lambda_{\mathrm{min}}^*}\|\bar{d}^{(k)}\|^2\le 0$, this direction preserves feasibility.
Therefore, we have
\begin{equation}{\label{eq:step size with second direction}}
	0\le \alpha_k \le \frac{2(1-\varepsilon)\lambda_{\mathrm{min}}^*}{\varepsilon_2(1+\varepsilon)L_g}\frac{\bar{d}_i^{(k)}}{\|\bar{d}^{(k)}\|^2}.
\end{equation}

Next, we evaluate the lower bound of $\frac{\bar{d}_i^{(k)}}{\|\bar{d}^{(k)}\|^2}$ with $\bar{d}^{(k)}$ defined by (\ref{eq:d-bar1 in direction2 of RSG-NC}) and by (\ref{eq:d-bar2 in direction2 of RSG-NC}). From the proof of Proposition~\ref{Proposition:dec_obj_extalgo_2}, $\|\bar{d}^{(k)}\|^2 \le \vert \mathcal{A}_k \vert$ holds for both (\ref{eq:d-bar1 in direction2 of RSG-NC}) and (\ref{eq:d-bar2 in direction2 of RSG-NC}).

In the case of $\bar{d}^{(k)} = \bm{1}$, i.e., (\ref{eq:d-bar1 in direction2 of RSG-NC}) in Algorithm~\ref{Algorithm:Third Algorithm}, we have that $\bar{d}_i^{(k)}$ = 1 and $\bar{d}^{(k)}_i/\|\bar{d}^{(k)}\|^2 \ge  1/\vert\mathcal{A}_k\vert  > 1/d$ from Assumption~\ref{Assumption:existence of inverse}\ref{Assumption:reduced dimension is larger than active sets}. 

In the case of $\bar{d}^{(k)}_i = \chi_{-}(\lambda_i^{(k)}) + \frac{\sum_{\lambda_j^{(k)} \le 0 }-\lambda_j^{(k)}}{2\sum_{\lambda_j^{(k)}>0} \lambda_j^{(k)}}\chi_{+}(\lambda_i^{(k)})$, i.e., (\ref{eq:d-bar2 in direction2 of RSG-NC}) in Algorithm~\ref{Algorithm:Third Algorithm}, when $\lambda_{i}^{(k)}$ is non-positive, $\bar{d}_i^{(k)} = 1$ and 
$\bar{d}^{(k)}_i/\|\bar{d}^{(k)}\|^2  > 1/d$.
When $\lambda_i^{(k)}> 0$, we have
\begin{alignat}{2}{\label{eq:lower d/d^2}}
	\notag
	\frac{\bar{d}^{(k)}_i}{\|\bar{d}^{(k)}\|^2} &
	\ge \frac{\sum_{\lambda^{(k)}_j \le 0} (-\lambda^{(k)}_j)}{2\vert\mathcal{A}_k\vert\sum_{\lambda^{(k)}_j > 0}\lambda^{(k)}_j}\\
	\notag
	&> \frac{\varepsilon_2}{2\vert \mathcal{A}_k\vert \|\lambda^{(k)}\|_1}\\
	&> \frac{\varepsilon_2}{2d\|\lambda^{(k)}\|_1} > 0.
\end{alignat}
The first inequality follows from the definition of $\bar{d}_i^{(k)}$ of (\ref{eq:d-bar2 in direction2 of RSG-NC}) and $\|\bar{d}^{(k)}\|\le \sqrt{\vert\mathcal{A}_k\vert}$.
The second inequality follows from $\min_i\lambda^{(k)}_i < -\varepsilon_2$ and the last inequality follows from Assumption~\ref{Assumption:existence of inverse}\ref{Assumption:reduced dimension is larger than active sets}.
Furthermore, from (\ref{eq:upper bound of lambda}), we have
\begin{equation}{\label{eq:l1 norm of lambda}}
	\|\lambda^{(k)}\|_1 \le \sqrt{\vert\mathcal{A}_k\vert}\|\lambda^{(k)}\| \le \sqrt{\frac{d(1+\varepsilon)}{(1-\varepsilon)}} \frac{U_f}{\sqrt{\lambda_{\mathrm{min}}(G_k^\top G_k)}}\le \sqrt{\frac{d(1+\varepsilon)}{(1-\varepsilon)}} \frac{U_f}{\sqrt{\lambda_{\mathrm{min}}^*}}
\end{equation}
with probability at least $1-4\exp{(-C_0\varepsilon^2d)}$. 
Combining (\ref{eq:lower d/d^2}) and (\ref{eq:l1 norm of lambda}), we have
\begin{equation}{\label{eq:lower d/d^2 2}}
	\frac{\bar{d}^{(k)}_i}{\|\bar{d}^{(k)}\|^2} \ge \frac{1}{2U_f}\sqrt{\frac{\lambda_{\mathrm{min}}^*(1-\varepsilon)}{d^3(1+\varepsilon)}}\varepsilon_2.
\end{equation}
From (\ref{eq:step size with second direction}) and, (\ref{eq:lower d/d^2 2}) or $\bar{d}^{(k)}_i/\|\bar{d}^{(k)}\|^2 > 1/d$, if the step size $\alpha_k$ satisfies
\begin{equation*}
	0\le
	\alpha_k \le \min\left(\frac{2(1-\varepsilon)\lambda_{\mathrm{min}}^*}{\varepsilon_2(1+\varepsilon)L_gd},
	\frac{1}{U_fL_g}\sqrt{\frac{(\lambda_{\mathrm{min}}^*)^3(1-\varepsilon)^3}{d^3(1+\varepsilon)^3}}\right),
\end{equation*}
$g_i(x_{k+1})\le 0 $ is satisfied for the active constraints.

If $i\notin \mathcal{A}_k $, we can apply the same argument as in (\ref{eq:feasibility direction1 not Ik}) of Proposition~{\ref{proposition:feasibility in algorithm3 1}} by replacing $\|M_k^\top R_k M_k^\top \fgrad\|$ with $\|M_kM_k^\top G_k y_k \|$. Thus, we have
\begin{equation}{\label{eq:lower bound in algorithm3}}
	0\le \alpha_k \le \frac{1}{\|M_kM_k^\top G_k y_k \|} \frac{-2u_g^{\varepsilon_0}}{U_g+\sqrt{U_g^2 - 2L_gg_*}}.
\end{equation}
From (\ref{eq:norm of MMGy}), (\ref{eq:lower bound in algorithm3}), and $\|\bar{d}^{(k)}\|\le \sqrt{\vert\mathcal{A}_k\vert}\le \sqrt{d}$, if the step size $\alpha_k$ satisfies
\[
	0\le
        \alpha_k \le \sqrt{\frac{(1-\varepsilon)\lambda_{\mathrm{min}}^*}{1+\varepsilon}}\frac{\sqrt{n}}{d\varepsilon_2}\frac{-2u_g^{\varepsilon_0}}{U_g+\sqrt{U_g^2 - 2L_gg_*}},
\]
$g_i(x_{k+1}) \le 0$ for the non-active constraints with probability at least $1-2\exp{(-C_0\varepsilon^2n)}-2\exp{(-C_0\varepsilon^2d)}$.
\end{proof}
	Following a similar argument to Proposition~\ref{proposition:feasibility in algorithm3 1}, the upper bound of the step size $\alpha_k$ in Proposition~\ref{proposition:feasibility in algorithm3 2} consists of three terms, an $O(1/d)$ term, an $O(1/d^{3/2})$ term, and an $O(n^{1/2}/d)$ term.
	When the original dimension $n$ is large enough, the $O(n^{1/2}/d)$ term becomes larger than other terms and the step size conditions can be written as 
	\[
		0\le \alpha_k \le \min\left(\frac{2(1-\varepsilon)\lambda_{\mathrm{min}}^*}{\varepsilon_2(1+\varepsilon)L_gd},
		\frac{1}{U_fL_g}\sqrt{\frac{(\lambda_{\mathrm{min}}^*)^3(1-\varepsilon)^3}{d^3(1+\varepsilon)^3}}\right).
	\]
        
\subsection{Global convergence}
We will construct $\eta^{(k)} \in \mathbb{R}^m$ from $\lambda^{(k)}$ of Algorithm~\ref{Algorithm:Third Algorithm} in the same way as described in Section~\ref{sec:global_conv}.
\begin{theorem}
	\label{theorem:global convergence of RSG-NC}
	Suppose that Assumptions~\ref{Assumption:Objective},\ref{Assumption:existence of inverse},\ref{Assumption:Constraints} and \ref{Assumption:maximum of Constraints} hold. 
	Let the optimal value of (\ref{main problem}) be $f^*(> -\infty)$, and let
	\begin{align*}
	\delta(\varepsilon,\varepsilon_0,\varepsilon_1,\varepsilon_2) =& \min\left(
	\min\left(O(n),O\left(\frac{n}{\sqrt{d}}\right),O\left(\sqrt{\frac{n^3}{d}}\vert u_g^{\varepsilon_0}\vert\right)\right)\frac{d}{n^2}\varepsilon_1^2,\right.\\
	&\qquad\left.\min\left(O(d^{-1}),O(\varepsilon_2^{-1}d^{-1}),O(d^{-3/2}),O\left(\frac{\sqrt{n}}{d\varepsilon_2}\vert u_g^{\varepsilon_0}\vert \right)\right) \frac{d}{n}\varepsilon_2^2
	\right).
	\end{align*}
	Then, Algorithm~\ref{Algorithm:Third Algorithm} generates an ($\varepsilon_1,\varepsilon_2,O(\varepsilon_0)$)-KKT pair from inputs $d, \varepsilon_0, \varepsilon_2, \delta_1 = \sqrt{\frac{d(1-\varepsilon)}{n^2}}\varepsilon_1, \mu_k = \frac{1}{2\sqrt{s_k^\top (G_k^\top M_k M_k^\top G_k)^{-1}s_k}}$
	within $K:= \left\lceil \frac{f(x_0)- f^*}{\delta(\varepsilon,\varepsilon_0,\varepsilon_1,\varepsilon_2)}\right\rceil$ iterations with probability at least $1 - 4K\exp{(-C_0\varepsilon^2n)}-2K(d+6)\exp{(-C_0\varepsilon^2d)}$.
\end{theorem}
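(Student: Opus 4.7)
The plan is to mirror the proof of Theorem~\ref{theorem:global convergence} but with the nonlinear-constraint machinery now supplied by Propositions~\ref{Proposition:dec_obj_extalgo}, \ref{Proposition:dec_obj_extalgo_2}, \ref{proposition:feasibility in algorithm3 1}, \ref{proposition:feasibility in algorithm3 2}, together with Lemma~\ref{lemma:relathonship berweeen Z_k and R_k}. First I would argue feasibility: since $x_0$ is feasible by assumption, Propositions~\ref{proposition:feasibility in algorithm3 1} and \ref{proposition:feasibility in algorithm3 2} (applied to the two possible choices of $d_k$) guarantee that the while-loop inside Algorithm~\ref{Algorithm:Third Algorithm} terminates with an $\alpha_k$ small enough that $x_{k+1}$ stays feasible, with the stated failure probabilities. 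This gives condition~\eqref{definite:feasible} at every iterate, in particular at termination.

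Next I would verify the KKT conditions at the terminating iterate $\bar{k}$. The complementary slackness bound \eqref{definite:epsilon kkt 3} is obtained exactly as in \eqref{eq:eval_cond3} of Theorem~\ref{theorem:global convergence}: for $i \in \mathcal{A}_{\bar{k}}$, the active-set definition gives $[-g_i(x_{\bar{k}})]_+ \le \varepsilon_0\|\nabla g_i(x_{\bar{k}})\|$, and plugging in the representation $\|\lambda^{(\bar{k})}\|^2 \le \lambda_{\min}(G_{\bar{k}}^\top G_{\bar{k}})^{-1} \lambda^{(\bar{k})\top} G_{\bar{k}}^\top G_{\bar{k}} \lambda^{(\bar{k})}$ together with Lemma~\ref{lemma:Johnson} bounds $|\eta^{(\bar{k})}_i g_i(x_{\bar{k}})| = O(\varepsilon_0)$; for $i \notin \mathcal{A}_{\bar{k}}$ we have $\eta^{(\bar{k})}_i = 0$. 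Condition \eqref{definite:epsilon kkt 2} is immediate from the termination test $\min_i \lambda^{(\bar{k})}_i \ge -\varepsilon_2$ together with $\eta^{(\bar{k})}_i = 0$ for inactive indices. For the gradient condition \eqref{definite:epsilon kkt 1}, the subtlety specific to the nonlinear algorithm is that $d_k$ in \eqref{eq:direction1 in RSG-NC} is built from $\bar\lambda^{(k)}$ (an oblique projection), whereas the KKT check uses the orthogonal multiplier $\lambda^{(k)}$ from \eqref{def_lambda_NC}. This is exactly where Lemma~\ref{lemma:relathonship berweeen Z_k and R_k} is essential: the choice $\mu_k = 1/(2\sqrt{s_k^\top (G_k^\top M_kM_k^\top G_k)^{-1} s_k})$ ensures $\|Z_k M_k^\top \nabla f(x_k)\| \le \|R_k M_k^\top \nabla f(x_k)\| = \|d_k\| \le \delta_1$, so by Lemma~\ref{lemma:Johnson},
\[
\sqrt{\tfrac{d(1-\varepsilon)}{n^2}}\|\nabla f(x_{\bar{k}}) + G_{\bar{k}} \lambda^{(\bar{k})}\| \le \|Z_{\bar{k}} M_{\bar{k}}^\top \nabla f(x_{\bar{k}})\| \le \delta_1 = \sqrt{\tfrac{d(1-\varepsilon)}{n^2}}\varepsilon_1,
\]
yielding the desired $\varepsilon_1$-stationarity.

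Finally I would bound the number of iterations by the standard telescoping argument, split into two cases as in the linear proof. In Case~1 (the $d_k$ of \eqref{eq:direction1 in RSG-NC} is taken and $\|R_k M_k^\top \nabla f(x_k)\| \ge \delta_1$), Lemma~\ref{lemma:relathonship berweeen Z_k and R_k} lower-bounds $\|Z_k M_k^\top \nabla f(x_k)\|^2$ in terms of $\delta_1^2 = \tfrac{d(1-\varepsilon)}{n^2}\varepsilon_1^2$, and Proposition~\ref{Proposition:dec_obj_extalgo} combined with Proposition~\ref{proposition:feasibility in algorithm3 1} (to control the feasible $\alpha_k$) yields a decrease of order $\min(O(n),O(n/\sqrt{d}),O(\sqrt{n^3/d}\,|u_g^{\varepsilon_0}|)) \cdot \tfrac{d}{n^2}\varepsilon_1^2$. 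In Case~2 (the $d_k$ of \eqref{eq:direction2 in RSG-NC} is taken and $\min_i \lambda_i^{(k)} < -\varepsilon_2$), Proposition~\ref{Proposition:dec_obj_extalgo_2} combined with Proposition~\ref{proposition:feasibility in algorithm3 2} yields a decrease of order $\min(O(d^{-1}),O(\varepsilon_2^{-1}d^{-1}),O(d^{-3/2}),O(\sqrt{n}(d\varepsilon_2)^{-1}|u_g^{\varepsilon_0}|))\cdot \tfrac{d}{n}\varepsilon_2^2$. Taking the minimum of these two bounds gives exactly $\delta(\varepsilon,\varepsilon_0,\varepsilon_1,\varepsilon_2)$, so telescoping and using $f^* \le f(x_{\bar{k}})$ forces $\bar{k} \le K$. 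A union bound over $K$ iterations, picking up the per-iteration failure probabilities from each proposition and Lemma~\ref{lemma:Johnson} invocation (which contributes the $4K\exp(-C_0\varepsilon^2 n)$ and $2K(d+6)\exp(-C_0\varepsilon^2 d)$ terms), completes the argument.

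The main obstacle is the careful tracking of which decrease and feasibility bounds pair with which $d_k$, and propagating Lemma~\ref{lemma:relathonship berweeen Z_k and R_k} through the termination test so that a small $\|R_k M_k^\top \nabla f\|$ actually certifies $\varepsilon_1$-stationarity in the original (orthogonal) KKT sense; the rest is bookkeeping of constants and union bounding the probabilities.
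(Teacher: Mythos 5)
Your proposal is correct and follows essentially the same route as the paper's proof: feasibility via Propositions~\ref{proposition:feasibility in algorithm3 1} and \ref{proposition:feasibility in algorithm3 2}, complementary slackness as in \eqref{eq:eval_cond3}, the chain $\|Z_k M_k^\top \nabla f(x_k)\| \le \|R_k M_k^\top \nabla f(x_k)\| = \|d_k\| \le \delta_1$ from Lemma~\ref{lemma:relathonship berweeen Z_k and R_k} to convert the oblique termination test into orthogonal $\varepsilon_1$-stationarity, and the two-case telescoping decrease combining Propositions~\ref{Proposition:dec_obj_extalgo}/\ref{proposition:feasibility in algorithm3 1} and \ref{Proposition:dec_obj_extalgo_2}/\ref{proposition:feasibility in algorithm3 2}. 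You correctly isolate the one genuinely nonlinear-specific subtlety (relating $\bar{\lambda}^{(k)}$ to $\lambda^{(k)}$ via the $Z_k$--$R_k$ comparison), which is exactly what the paper does.
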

\begin{proof}
	The points $\{x_k\}$ are feasible because the conditions of Propositions~\ref{proposition:feasibility in algorithm3 1} and \ref{proposition:feasibility in algorithm3 2} are satisfied.
	Hence, (\ref{definite:feasible}) is satisfied. Furthermore, we can prove (\ref{definite:epsilon kkt 3}) in a similar way to (\ref{eq:eval_cond3}) in Theorem~\ref{theorem:global convergence}.
	Next, we prove (\ref{definite:epsilon kkt 1}) and (\ref{definite:epsilon kkt 2}). If Algorithm~\ref{Algorithm:Third Algorithm} stops, we have 
	\begin{equation}{\label{eq:first norm}}
		\|d_k\|\le \delta_1
	\end{equation}
	and 
	\[
	\min_{i} \lambda_i^{(k)} \geq -\varepsilon_2.
	\]
	The second inequality is identical to (\ref{definite:epsilon kkt 2}).
	From (\ref{eq:first norm}) and \eqref{eq:direction1 in RSG-NC}, 
  we obtain
	\begin{alignat*}{2}
		\notag
		\delta_1 &= \sqrt{\frac{d(1-\varepsilon)}{n^2}}\varepsilon_1\ge \|M_k^\top (\fgrad + G_k \bar{\lambda}^{(k)})\| = \|R_k M_k^\top \fgrad\|\\
		\notag
		&\ge \|Z_k M_k^\top \fgrad \| = \|M_k^\top (\fgrad + G_k \lambda^{(k)})\|\\
		&\ge \sqrt{\frac{d(1-\varepsilon)}{n^2}}\|\fgrad + G_k \lambda^{(k)}\|.
	\end{alignat*}
	The second inequality follows from (\ref{eq:eval_2}) and the last inequality follows from Lemma~\ref{lemma:Johnson}.
	Then, 
	\[
		\varepsilon_1 \ge \|\fgrad + G_k \lambda^{(k)}\| = \|\fgrad + \sum_{i = 1}^m \eta_{i}^{(k)} \nabla g_i(x_k)\|
	\]
	holds with probability at least $1-2\exp{(-C_0\varepsilon^2d)}$, and we have confirmed that
	$(x_k,\eta^{(k)})$ is an $\left(\varepsilon_1,\varepsilon_2, O(\varepsilon_0)\right)$-KKT pair.
        
	Now let us prove that Algorithm~\ref{Algorithm:Third Algorithm} terminates at the $\bar{k}$th iteration with $\bar{k} \leq K$,
        by using the same argument as in Theorem~\ref{theorem:global convergence}.
        Assuming an arbitrary iteration $k \le \bar{k}-1$, we will show that the function value strictly and monotonically decreases in the two directions.
	\begin{description}
		\item{Case 1:} 
		When $\|M_k^\top(\fgrad + G_k\bar{\lambda}^{(k)})\|> \delta_1$ and $0 \le \alpha_k\le \frac{n}{3L(1+\varepsilon)}$ hold, 
		we have
		\[
		\delta_1^2 = \frac{d(1-\varepsilon)}{n^2}\varepsilon_1^2 
		<\|M_k^\top(\fgrad + G_k\bar{\lambda}^{(k)})\|^2
		=\|R_kM_k^\top \fgrad\|^2
		< 2\|Z_k M_k^\top \fgrad\|^2
		\]
		from (\ref{eq:eval_2}) and 
		\[
			\alpha_k - \frac{3\alpha_k^2 L(1+\varepsilon)}{2n} \ge \frac{1}{2} \alpha_k.
		\]
		These relations together with Proposition~\ref{Proposition:dec_obj_extalgo} lead us to
		\begin{equation*}
			f(x_{k+1}) - f(x_k) \le - \frac16 \alpha_k \frac{d(1-\varepsilon)}{n^2}\varepsilon_1^2,
		\end{equation*}
		when the step size $\alpha_k$ satisfies $0 \le \alpha_k \le \frac{n}{3L(1+\varepsilon)}$.
		For the first direction $d_k = - R_kM_k^\top \fgrad$ of (\ref{eq:direction1 in RSG-NC}), Proposition~\ref{proposition:feasibility in algorithm3 1} allows us to set the step size $\alpha_k$ as
		\begin{alignat*}{2}         
			\alpha_k & = \min\left(
			\frac{n}{3L(1+\varepsilon)},
			\frac{\sqrt{\lambda_{\mathrm{min}}^*} l_g^{\varepsilon_0}(1-\varepsilon)}{3U_fU_gL_g(1+\varepsilon)^2}\frac{n}{\sqrt{d}},
			\frac{1}{U_f}\sqrt{\frac{n^3}{2d(1+\varepsilon)^2}}\frac{-2u_g^{\varepsilon_0}}{U_g+\sqrt{U_g^2 - 2L_gg_*}}
			\right)\\
			&= \min\left(O(n),O\left(\frac{n}{\sqrt{d}}\right),O\left(\sqrt{\frac{n^3}{d}}\vert u_g^{\varepsilon_0}\vert \right)\right).
		\end{alignat*}
		Then,
		\begin{equation}{\label{eq:decrease1}}
			f(x_{k+1}) - f(x_k) \le - \min\left(O(n),O\left(\frac{n}{\sqrt{d}}\right),O\left(\sqrt{\frac{n^3}{d}}\vert u_g^{\varepsilon_0}\vert \right)\right)\frac{d}{n^2}\varepsilon_1^2.    
		\end{equation}
		
		\item{Case 2:} 
		When $\|M_k^\top \fgrad + M_k^\top G_k \bar{\lambda}^{(k)}\| \le \delta_1$, we update the point by $d_k = M_k^\top G_k y_k$. Since Algorithm~\ref{Algorithm:Third Algorithm} does not terminate at iteration $k$, we have $\min_i \lambda_i^{(k)} < -\varepsilon_2$.
		When the step size $\alpha_k$ satisfies $0 \le \alpha_k \le \frac{(1-\varepsilon)\lambda_{\min}(G_k^\top G_k)}{2(1+\varepsilon)\vert \mathcal{A}_k\vert L}$, from Proposition~\ref{Proposition:dec_obj_extalgo_2} and the following inequality,
		\[
			\alpha_k - \frac{(1+\varepsilon)L \alpha_k^2 \vert \mathcal{A}_k \vert}{(1-\varepsilon)\lambda_{\min}(G_k^\top G_k)} \ge \frac{1}{2}\alpha_k,
		\]
		we have
		\begin{equation*}
				f(x_{k+1}) - f(x_{k}) \le -\alpha_k \frac{\varepsilon_2^2 d}{4n}.
		\end{equation*}
		By Proposition~\ref{proposition:feasibility in algorithm3 2}, we can set the step size $\alpha_k$ to
		{\small 
		\begin{align*}        
			\alpha_k & = \min\left(
			\frac{(1-\varepsilon) \lambda_{\mathrm{\mathrm{min}}}^*}{2(1+\varepsilon) Ld},
			\frac{2(1-\varepsilon)\lambda_{\mathrm{min}}^*}{\varepsilon_2(1+\varepsilon)L_gd},
			\frac{1}{U_fL_g}\sqrt{\frac{(\lambda_{\mathrm{min}}^*)^3(1-\varepsilon)^3}{d^3(1+\varepsilon)^3}},\right.\\
			&\quad \left.\sqrt{\frac{(1-\varepsilon)\lambda_{\mathrm{min}}^*}{1+\varepsilon}}\frac{\sqrt{n}}{d\varepsilon_2}\frac{-2u_g^{\varepsilon_0}}{U_g+\sqrt{U_g^2 - 2L_gg_*}}
			\right)\\
			&= \min\left(O(d^{-1}),O(\varepsilon_2^{-1}d^{-1}),O(d^{-3/2}),O\left(\frac{\sqrt{n}}{d\varepsilon_2}\vert u_g^{\varepsilon_0}\vert \right)\right).
		\end{align*}}
	\normalsize
	Accordingly, we have
		\begin{equation}{\label{eq:decrease2}}
			f(x_{k+1}) - f(x_k)\le -\min\left(O(d^{-1}),O(\varepsilon_2^{-1}d^{-1}),O(d^{-3/2}),O\left(\frac{\sqrt{n}}{d\varepsilon_2}\vert u_g^{\varepsilon_0}\vert \right)\right) \frac{d}{n}\varepsilon_2^2.
		\end{equation}
		
	\end{description}
	From the relations (\ref{eq:decrease1}) and (\ref{eq:decrease2}), Algorithm~\ref{Algorithm:Third Algorithm} decreases the objective function value by 
	$\delta(\varepsilon,\varepsilon_0,\varepsilon_1,\varepsilon_2)$: 
	\[
	f(x_{k+1}) - f(x_k)\le- \delta(\varepsilon,\varepsilon_0,\varepsilon_1,\varepsilon_2) <0.
	\]
        Summing over $k$, we find that
	\[
        f^* - f(x_0) \leq f(x_{\bar{k}}) - f(x_0) \le -\bar{k} \delta(\varepsilon,\varepsilon_0,\varepsilon_1,\varepsilon_2), 
	\]
        which implies $\bar{k} \leq K$,
	with probability at least $1 - 4\bar{k}\exp{(-C_0\varepsilon^2n)}-2\bar{k}(d+6)\exp{(-C_0\varepsilon^2d)}$.
\end{proof}
\begin{remark}
  If the original dimension $n$ is large enough, the denominator of the iteration number $K$,  $\delta$,
 becomes
	\begin{align*}
	\delta(\varepsilon,\varepsilon_0,\varepsilon_1,\varepsilon_2) =& \min\left(
	\min\left(O(n),O\left(\frac{n}{\sqrt{d}}\right),\right)\frac{d}{n^2}\varepsilon_1^2,\right.\\
	&\qquad\left.\min\left(O(d^{-1}),O(\varepsilon_2^{-1}d^{-1}),O(d^{-3/2}) \right) \frac{d}{n}\varepsilon_2^2
	\right),
	\end{align*}
	and we can ignore the terms of $\vert u_g^{\varepsilon_0}\vert $.
\end{remark}
\begin{remark}
	We can prove convergence of the deterministic of our algorithm (i.e., $M_k = I$) by the same argument in Section~\ref{section:Algorithm for nonlinear inequality constraints}.
	However, the iteration complexity becomes $O(\max{ (\max{(\varepsilon_1^{-2}, \varepsilon_2^{-2})}, (\vert u_g^{\varepsilon_0} \vert)^{-1}\max{(\varepsilon_1^{-2}, \varepsilon_2^{-2})} ) })$ and we cannot ignore $u_g^{\varepsilon_0}$ terms.
	When calculating the gradient $\nabla f(x_k)$ is difficult, the time complexity of the deterministic version to reach an approximate KKT point becomes $O(n) \times O(\max{ (\max{(\varepsilon_1^{-2}, \varepsilon_2^{-2})}, (\vert u_g^{\varepsilon_0} \vert)^{-1}\max{(\varepsilon_1^{-2}, \varepsilon_2^{-2})} ) })$, which is worse than ours with randomness ($O(d) \times O(\max(\frac{n}{\sqrt{d}}\varepsilon_1^{-2},n\sqrt{d}\varepsilon_2^{-2}))$).
\end{remark}
	The computational complexity per iteration of the proposed method is 
	\[
	O\left(dn\vert\mathcal{A}_k\vert  + m(T_{grad} + T_{value}) + mT_{value}N_{\mathrm{while}}\right).
	\] 
	$N_{\mathrm{while}}$ denotes the number of executions of the while-loop to satisfy feasibility and $N_{\mathrm{while}}$ is  $O\left(\vert \log{\frac{\sqrt{d}}{n}} \vert \right)$ or $O(\log{d})$.
The first and second terms come from calculating $M_k^\top G_k$ and the active set, respectively. The last term comes from the while-loop.
From Propositions~\ref{proposition:feasibility in algorithm3 1} and \ref{proposition:feasibility in algorithm3 2}, if 
\[
h\beta^k \le \min\left(O\left(\frac{n}{\sqrt{d}}\right),O\left(\sqrt{\frac{n^3}{d}}\vert u_g^{\varepsilon_0}\vert \right)\right)
\]
with the direction of (\ref{eq:direction1 in RSG-NC})
or
\[
	h\beta^k \le \min\left( O(\varepsilon_0^{-1}d^{-1}),
	O(d^{-3}),
	O\left(\frac{\sqrt{n}\vert u_g^{\varepsilon_0}\vert }{d\varepsilon_2}\right)\right)
\]
with the direction of (\ref{eq:direction2 in RSG-NC})
is satisfied, the while-loop will terminate. Hence, we can find a feasible solution within
at least $O\left(\vert \log{\frac{\sqrt{d}}{n}} \vert\right)$ or $O(\log{d})$ steps.

\section{Numerical experiments}
In this section, we provide results for test problems and machine-learning problems using synthetic and real-world data.
For comparison, we selected projected gradient descent (PGD) and the gradient projection method (GPM)~\cite{rosen1960gradient}.  Furthermore, 
we compared our method with a deterministic version constructed by setting $M_k$ to an identity matrix,
which is the same as in GPM when setting $\varepsilon_0=0$ of $\mathcal{A}_k$ for all $k$.  
All programs were coded in python 3.8  and run on a machine with Intel(R) Xeon(R) CPU E5-2695 v4 @ 2.10GHz and Nvidia (R) Tesla (R) V100 SXM2 16GB.

\subsection{Linear constrained problems}
\subsubsection{Nonconvex quadratic objective function}
We applied Algorithm~\ref{Secondary Algorithm} to a nonconvex quadratic function under box constraints:
\begin{equation*}
	\begin{split}
		&\min_{x} \frac{1}{2}x^\top Q x + b^\top x,\\
		&\mathrm{s.t.} \; -1\le x \le 1.
	\end{split}
\end{equation*}
We used $Q\in \mathbb{R}^{1000\times 1000},b\in \mathbb{R}^{1000}$, whose entries were sampled from $\Normal$, and thus, $Q$ was not a positive semi-definite matrix.
We set the parameters $\varepsilon_0, \delta_1, \varepsilon_2, \beta$, and the reduced dimension $d$ as follows:
\[
	\varepsilon_0 = 10^{-6}, \delta_1 = 10^{-4}, \varepsilon_2 = 10^{-6}, \beta = 0.8, d= 1000.
\]
For $M_k = \frac{1}{n}P_k^\top$, we set the step size as  $h \in \{10^{2}n/L,10^{1}n/L,n/L,10^{-1}/L\}$. For $M_k = I$, we set the step sizes as $h \in \{10^{2}/L,10^{1}/L,1/L,10^{-1}/L\}$.
$L$ denotes the maximum eigenvalue of $Q$.
As shown in Table~\ref{table:Numerical Experiment1}, our method with random projections worked better than the deterministic version.
This is because our method could move randomly when there are many stationary points. This result indicates that our randomized subspace algorithm tends to explore a wider space than the deterministic version.
PGD converges to a stationary point faster than our method does, while our method obtains better solutions achieving smaller function values than those of the PGD in most cases for some choices of random matrices.

\begin{table}[hbtp]
	\centering
	\caption{Applying methods starting from $0$ as the initial solution for minimizing a quadratic function under box constraints. The results for Ours ($M_k = \frac{1}{n}P_k^\top$) are the average and the standard deviation among 10 trials with different choices of random matrices.  }
	{\label{table:Numerical Experiment1}}
	\begin{tabular}{ccccc}
		&PGD & GPM &Ours $(M_k=I)$ &  Ours ($M_k = \frac{1}{n}P_k^\top$)  \\
		\hline \hline
		$f(x_k)$ & -21545& -20620& -22172&-21751$\pm$ 400\\
		Time[s] &0.74 & 1198 &146 &172 $\pm$ 25 \\
		\hline
	\end{tabular}
\end{table}
\subsubsection{Non-negative matrix completion}
We applied Algorithm~\ref{Secondary Algorithm} to optimization problems with realistic data.
We used the MovieLens 100k dataset and solved the non-negative matrix completion problem:
\begin{equation*}
	\begin{split}
		&\min_{U,V} \|\mathcal{P}_{\Omega}(X) - \mathcal{P}_{\Omega}(UV^\top) \|^2,\\
		&\mathrm{s.t.} \; U\ge 0, V\ge 0.
	\end{split}
\end{equation*}
Here, $X\in R^{943\times 1682}$ is a data matrix, and $U \in \mathbb{R}^{943\times 5}$ and $V\in \mathbb{R}^{1682\times 5}$ are decision variables.
We set $\Omega \subset \{(i,j) \vert \ i=1,\ldots,943, j=1\ldots,1682\}$
and defined $\mathcal{P}_{\Omega}$ as 
\[
(\mathcal{P}_\Omega(X))_{ij} = 
\left\{\begin{array}{cc}
	X_{ij}&((i,j)\in \Omega), \\
	0&(\mathrm{otherwise}).
\end{array}
\right. 
\]
We set the parameters $\varepsilon_0, \delta_1, \varepsilon_2, \beta$, and reduced dimension $d$ as follows:
\[
	\varepsilon_0 = 10^{-4}, \delta_1 = 10^{-5}, \varepsilon_2 = 10^{-5}, \beta = 0.8, d = 600.
\]
For $M_k = \frac{1}{n}P_k^\top$, we set the step size as $h \in \{10^2n,10^1n,n,10^{-1}n,10^{-2}n\}$. For $M_k = I$, we set the step size as $h \in \{10^2,10^1,1,10^{-1},10^{-2}\}$.
As shown in Table~\ref{table:Numerical Experiment2}, our method with randomness obtained the best result.
It converged to a different point from the point found by the other methods; that made its computation time longer.

\begin{table}[hbtp]
	\centering
	\caption{Applying methods starting from all $1$ as the initial solution for the non-negative matrix completion problem. The results in Ours ($M_k = \frac{1}{n}P_k^\top$) are the average and the standard deviation among 20 trials with different choices of random matrices.   }
	{\label{table:Numerical Experiment2}}
	\begin{tabular}{ccccc}
		&PGD & GPM &Ours $(M_k=I)$ &  Ours ($M_k = \frac{1}{n}P_k^\top$)  \\
		\hline \hline
		$f(x_k)$ & 66580& 66580 &66582&51585 $\pm$ 348\\
		Time[s] &220 & 325 &379 &579 $\pm$ 5\\
		\hline
	\end{tabular}
\end{table}

\subsection{Nonlinear constrained problems}
\subsubsection{Neural network with constraints}
We applied Algorithm~\ref{Algorithm:Third Algorithm} to a high-dimensional optimization having nonlinear constraint(s) as a regularizer $\mathcal{R}$ of \eqref{reg_const_problem}. 
We used three different three-layer neural networks with cross entropy loss functions $\mathcal{L} (x) := \sum_{i=1}^{48000} \ell_i(x)$
for the MNIST dataset, in which the dimension of $x$ is $669706$;  
\begin{description}
\item{(a)} neural network with $l_1$-regularizer $\|x\|_1 \leq 12000$ and sigmoid activation function, 
\item{(b)} neural network with the same $l_1$-regularizer with (a) and ReLu activation function, and
\item{(c)} neural network with fused lasso~\cite{tibshirani2005sparsity}
  $\|x\|_1 \leq 12000, \sum_{i=2}^{669706} \vert x_i-x_{i-1} \vert \leq 14600$ and sigmoid activation function.
\end{description}
We set the parameters $\varepsilon_0, \delta_1, \varepsilon_2, \beta, \mu_k$ as follows:
\begin{align*}
	\varepsilon_0 = 10^{-1}, \delta_1 = 10^{-8}, \varepsilon_2 = 10^{-5}, \beta = 0.8, \\
	\mu_k = \frac{r}{\sqrt{s_k^\top (G_k^\top M_k M_k^\top G_k)s_k}}\; (r \in \{0.5, 0.1, 0.05\}).
\end{align*}

For $M_k = \frac{1}{n}P_k^\top$, we set the step size as $h \in \{10^4n,10^3n,10^2n,10^1n,n,10^{-1}n\}$. For $M_k = I$, we set the step size as $h \in \{10^4.10^3,10^2,10,1,10^{-1}\}$.
We also used the dynamic barrier method~\cite{gong2021automatic} for the $l_1$ regularizer and fused lasso problems, and PGD for the
$l_1$ regularizer problems. 

PGD performed well when the projection onto the constraints could be calculated easily, while the dynamic barrier methods worked well when the number of constraints was equal to one.
Therefore, problem settings (a) and (b) are good for these methods.
Figure~\ref{figure:Numerical Experiment3}(a,b) shows that our method with randomness worked as well as the compared methods under $l_1$-regularization.
Furthermore, it performed better than the deterministic versions, although
we did not prove convergence in the non-smooth-constraints setting due to the $l_1$-norm.
As shown in Figure~\ref{figure:Numerical Experiment3}(c) for the non-simple projection setting, our method with randomness outperformed the compared methods. 
The step size of RSG-NC with $M_k = I$ came close to 0 in order to satisfy feasibility under $l_1$ regularization. On the other hand, our method with $M_k = \frac{1}{n}P_k^\top$ performed well and the step size did not come close to 0.
\begin{figure}[H]
	\begin{minipage}{0.45\hsize}
		\centering
		\includegraphics[scale = 0.4]{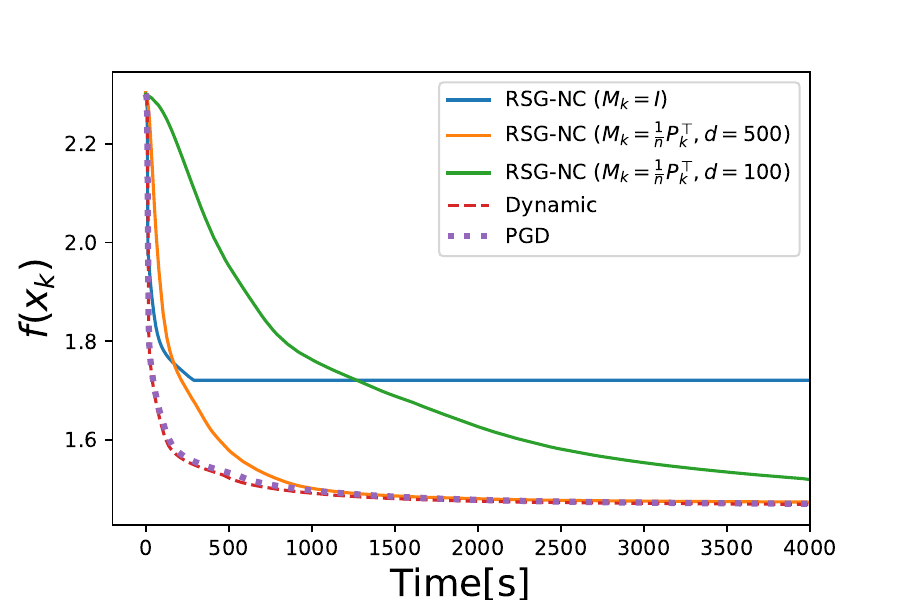}
		\subcaption{}
	\end{minipage}
	\begin{minipage}{0.45\hsize}
		\centering
		\includegraphics[scale = 0.4]{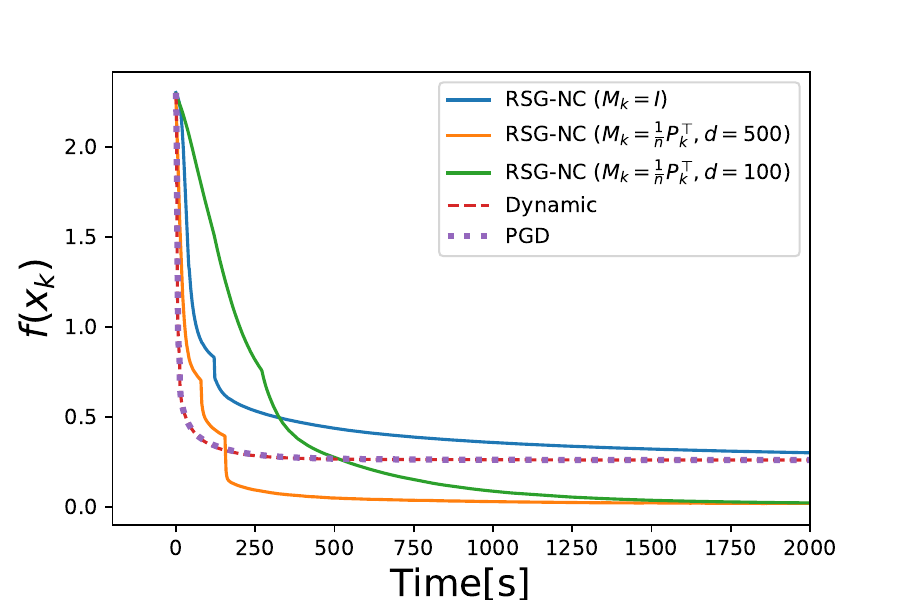}
		\subcaption{}
	\end{minipage}
	\begin{minipage}{\hsize}
		\centering
		\includegraphics[scale = 0.4]{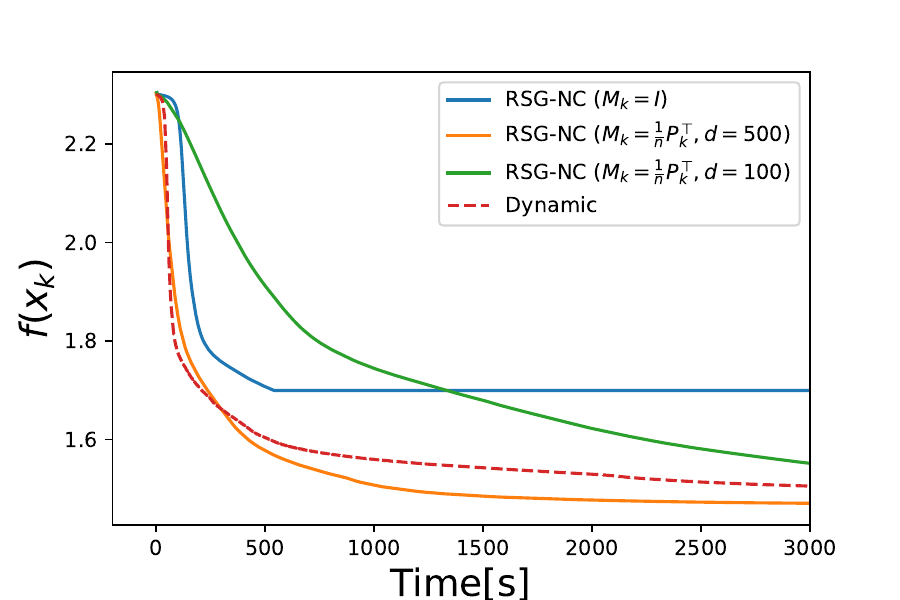}
		\subcaption{}
	\end{minipage}
	\caption{(a) Neural network with $l_1$-regularizer and sigmoid activation function, 
	(b) Neural network with $l_1$-regularizer and ReLu activation function
	(c) Neural network with fused lasso and sigmoid activation function.}
	{\label{figure:Numerical Experiment3}}
\end{figure}

\subsubsection{CNN with constraints}
Since machine-learning problems often have highly nonconvex complicated objective functions consisting of numerous terms,
 we can not obtain the full gradient because of limitations on memory.  
In such a situation, we can use finite difference to calculate gradients. Here, we applied our method to this optimization problem that  
needs only the projected gradients $M_k^\top \nabla f \in \mathbb{R}^{d}$. 
If the reduced dimension $d$ is much smaller than original dimension $n$, it would save on time complexity. We optimized the CNN with the cross entropy loss under $l_2$ regularization, 
$\|x\|_2^2\le 50$, on the MNIST dataset.
We set the parameters $\varepsilon_0, \delta_1, \varepsilon_2, \beta, \mu_k$ as follows:
\begin{align*}
	\varepsilon_0 = 10^{-6}, \delta_1 = 10^{-8}, \varepsilon_2 = 10^{-4}, \beta = 0.8, \mu_k = \frac{1}{2\sqrt{s_k^\top (G_k^\top M_k M_k^\top G_k)s_k}}.
\end{align*}
For $M_k = \frac{1}{n}P_k^\top$, we set the step size as $h \in \{10^2n,10^1n,n\}$. For $M_k = I$, we set the step size as $h \in \{100,10,1\}$.
Figure~\ref{figure:Numerical Experiment4} shows that our method with random projection performs better than the deterministic version. 

\begin{figure}[H]
	\centering
	\includegraphics[scale = 0.5]{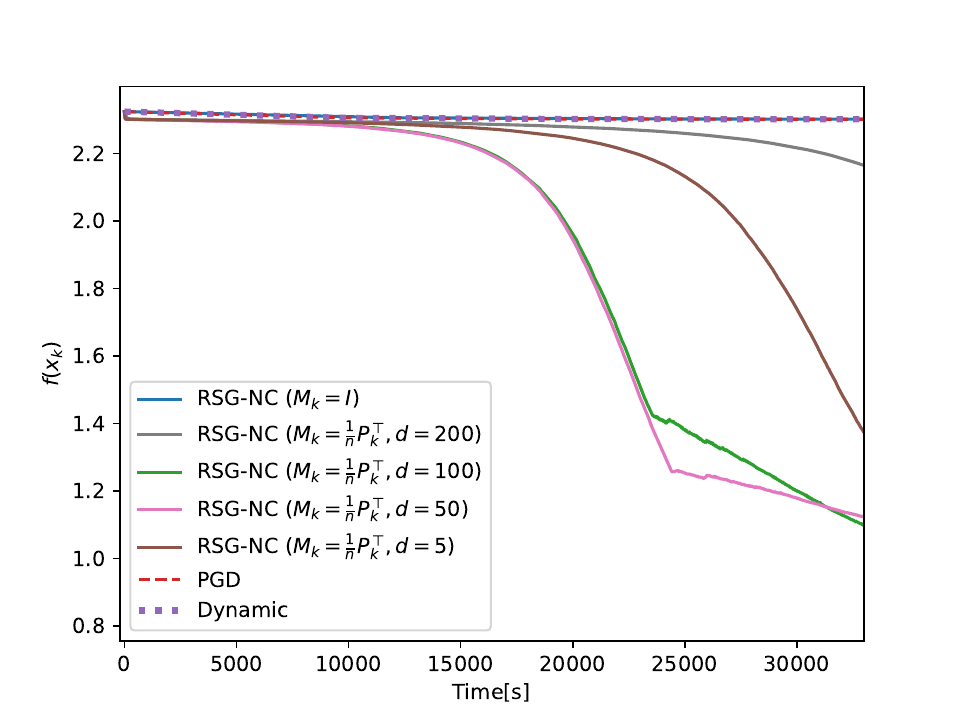}
	\caption{CNN with $l_2$-regularizer}
	\label{figure:Numerical Experiment4}
\end{figure} 

\section{Conclusion}
We proposed new methods combining the random projection and gradient projection method. We proved that they globally converge under linear constraints and nonlinear constraints.
When the original dimension is large enough, they converge in $O(\varepsilon^{-2})$. The numerical experiments showed some advantages of random projections as follows. 
First, our methods with randomness have the potential to obtain better solutions than those of their deterministic versions. Second, under non-smooth constraints, they did not become trapped at the boundary, whereas their deterministic versions did become trapped.
Last, our methods performed well when the gradients could not be obtained directly. In the future, we would like to investigate the convergence rate of our
algorithms in a non-smooth constraints setting.

\section{Compliance with Ethical Standards}
This work was partially supported by JSPS KAKENHI (23H03351) and JST ERATO (JPMJER1903).
There is no conflict of interest in writing the paper.

\bibliography{main}


\end{document}